\tikzset{node distance=2cm, auto}
\newtheorem{cs}{Case}
\def\N{\mathbb{N}}
\def\R{\mathbb{R}}
\def\K{\mathbb{K}}
\def\F{\mathcal{F}}
\def\L{\mathcal{L}}
\def\Re{\mathrm{Re}}
\def\lin{\mathrm{lin}}
\def\Lip{\mathrm{Lip}}
\def\Lipo{\mathrm{Lip}_0}
\def\LipoF{\mathrm{Lip}_{0F}}
\def\rank{\mathrm{rank}}
\newcommand{\n}[1]{ \left\|#1\right\| }
\newcommand{\bign}[1]{ \big\|#1\big\| }
\newcommand{\pair}[2]{{\langle #1, #2 \rangle}}
\newcommand{\Bpair}[2]{{\Big\langle #1, #2 \Big\rangle}}
\begin{document}
\title[Lipschitz tensor product]{Lipschitz tensor product}

\author{M. G. Cabrera-Padilla}
\address{Departamento de Matem\'{a}ticas, Universidad de Almer\'{i}a, 04120 Almer\'{i}a, Spain}
\email{m\_gador@hotmail.com}

\author{J. A. Ch\'avez-Dom\'{\i}nguez}
\address{Department of Mathematics,
University of Texas at Austin,
Austin, TX 78712-1202.}
\email{jachavezd@math.utexas.edu}
\thanks{The second author's research was partially supported by NSF grants DMS-1001321 and DMS-1400588, and the third author's one by Junta of Andaluc\'{i}a grant FQM-194 and Ministerio de Econom\'{i}a y Competitividad project MTM2014-58984-P.}

\author{A. Jim\'{e}nez-Vargas}
\address{Departamento de Matem\'{a}ticas, Universidad de Almer\'{i}a, 04120 Almer\'{i}a, Spain}
\email{ajimenez@ual.es}
\date{\today}

\author{Mois\'{e}s  Villegas-Vallecillos}
\address{Departamento de Matem\'{a}ticas, Universidad de C\'{a}diz, 11510 Puerto Real, Spain}
\email{moises.villegas@uca.es}

\subjclass[2010]{26A16, 46B28, 46E15, 47L20}

\keywords{Lipschitz map; tensor product; $p$-summing operator; Lipschitz compact operator}

\begin{abstract}
Inspired by ideas of R. Schatten in his celebrated monograph \cite{schatten} on a theory of cross-spaces, we introduce the notion of a Lipschitz tensor product $X\boxtimes E$ of a pointed metric space $X$ and a Banach space $E$ as a certain linear subspace of the algebraic dual of $\Lipo(X,E^*)$. 
We prove that $X\boxtimes E$ is linearly isomorphic to the linear space of all finite-rank continuous linear operators from $(X^\#,\tau_p)$ into $E$, where $X^\#$ denotes the space $\Lipo(X,\mathbb{K})$ and $\tau_p$ is the topology of pointwise convergence of $X^\#$. The concept of Lipschitz tensor product of elements of $X^\#$ and $E^*$ yields the space $X^\#\boxast E^*$ as a certain linear subspace of the algebraic dual of $X\boxtimes E$. To ensure the good behavior of a norm on $X\boxtimes E$ with respect to the Lipschitz tensor product of Lipschitz functionals (mappings) and bounded linear functionals (operators), the concept of dualizable (respectively, uniform) Lipschitz cross-norm on $X\boxtimes E$ is defined. We show that the Lipschitz injective norm $\varepsilon$, the Lipschitz projective norm $\pi$ and the Lipschitz $p$-nuclear norm $d_p$ $(1\leq p\leq\infty)$ are uniform dualizable Lipschitz cross-norms on $X\boxtimes E$. In fact, $\varepsilon$ is the least dualizable Lipschitz cross-norm and $\
pi$ is the greatest Lipschitz cross-norm on $X\boxtimes E$. Moreover, dualizable Lipschitz cross-norms $\alpha$ on $X\boxtimes E$ are characterized by satisfying the relation $\varepsilon\leq\alpha\leq\pi$.
In addition, the Lipschitz injective (projective) norm on $X\boxtimes E$ can be identified with the injective (respectively, projective) tensor norm on the Banach-space tensor product between the Lipschitz-free space over $X$ and $E$.
In terms of the space $X^\#\boxast E^*$, we describe the spaces of Lipschitz compact (finite-rank, approximable) operators from $X$ to $E^*$.
\end{abstract}
\maketitle

\section*{Introduction}

The Lipschitz space $\Lipo(X,E)$ is the Banach space of all Lipschitz maps $f$ from a pointed metric space $X$ to a Banach space $E$ that vanish at the base point of $X$, under the Lipschitz norm given by 
$$
\Lip(f)=\sup\left\{\frac{\left\|f(x)-f(y)\right\|}{d(x,y)}\colon x,y\in X, \; x\neq y\right\}.
$$ 
The elements of $\Lipo(X,E)$ are referred to as Lipschitz operators. If $\K$ is the field of real or complex numbers, the space $\Lipo(X,\mathbb{K})$, denoted by $X^\#$, is called the Lipschitz dual of $X$. A comprehensive reference for the basic theory of the spaces of Lipschitz functions is the book \cite{w} by N. Weaver.

The use of techniques of the theory of algebraic tensor product of Banach spaces to tackle the problem of the duality for Lipschitz operators from $X$ to $E$ goes back to the seventies with the works \cite{j70,j71} of J. A. Johnson. Recently, the second-named author \cite{ch11} has adopted this approach to describe the duals of spaces of Lipschitz $p$-summing operators from $X$ to $E^*$ for $1\leq p\leq\infty$. The notion of Lipschitz $p$-summing operators between metric spaces, a nonlinear generalization of $p$-summing operators, was introduced by J. D. Farmer and W. B. Johnson in \cite{fj}, where a nonlinear version of the Pietsch factorization theorem was established. The article \cite{fj} has motivated the study of Lipschitz versions of different types of bounded linear operators in the works \cite{ch11,ch12,cz11,cz12,jsv}.

The reading of the paper \cite{ch11} invites to give a definition for the tensor product of $X$ and $E$. In \cite{j70}, J. A. Johnson proved that the dual of the closed linear subspace of $\Lipo(X,E^*)^*$ spanned by the functionals $\delta_x\boxtimes e$ on $\Lipo(X,E^*)$ with $x\in X$ and $e\in E$, defined by
$(\delta_x\boxtimes e)(f)=\langle f(x),e\rangle$, is isometrically isomorphic to $\Lipo(X,E^*)$. It is well known that the dual of the projective tensor product of Banach spaces $E$ and $F$ can be identified with the space of all bounded linear operators from $E$ to $F^*$, so the predual of $\Lipo(X,E^*)$ provided by Johnson's result plays the role of the projective tensor product in the linear theory and this fact suggests to call Lipschitz tensor product of $X$ and $E$ the linear subspace of the algebraic dual of $\Lipo(X,E^*)$ spanned by the functionals $\delta_x\boxtimes e$. 
In \cite{ch11}, this space is called the space of $E$-valued molecules on $X$, a generalization of the Arens--Eells space $\mbox{\AE}(X)$ of scalar-valued molecules on $X$ (see \cite{ae,w}). 

Our purpose is to develop a theory of the Lipschitz tensor product $X\boxtimes E$ of a pointed metric space $X$ and a Banach space $E$ by following the original ideas of R. Schatten \cite{schatten} used to construct the algebraic tensor product of two Banach spaces. We are also motivated by the problem of researching the spaces of Lipschitz compact (finite-rank, approximable) operators from $X$ to $E^*$ introduced in \cite{jsv}.

We now describe the contents of this paper. In Section \ref{1}, we introduce and study the Lipschitz tensor product $X\boxtimes E$. We show that $\left\langle X\boxtimes E,\Lipo(X,E^*)\right\rangle$ forms a dual pair and identify linearly the space $\Lipo(X,E^*)$ with a linear subspace of the algebraic dual of $X\boxtimes E$, and the space $X\boxtimes E$ with the space $\mathcal{F}((X^\#,\tau_p);E)$ of all finite-rank continuous linear operators from $(X^\#,\tau_p)$ to $E$, where $\tau_p$ denotes the topology of pointwise convergence of $X^\#$. 

In Section \ref{2}, we define the concept of a Lipschitz tensor product of a Lipschitz functional $g\in X^\#$ and a bounded linear functional $\phi\in E^*$ as the linear functional $g\boxtimes\phi$ on $X\boxtimes E$ given by 
$$
(g\boxtimes\phi)\left(\sum_{i=1}^n \delta_{(x_i,y_i)}\boxtimes e_i\right)=\sum_{i=1}^n(g(x_i)-g(y_i))\left\langle \phi,e_i\right\rangle,
$$
and consider the associated Lipschitz tensor product of $X\boxtimes E$, denoted by $X^\#\boxast E^*$, as the linear subspace of the algebraic dual of $X\boxtimes E$ spanned by the elements $g\boxtimes\phi$. It is showed that the space $X^\#\boxast E^*$ is linearly isomorphic to the space $\LipoF(X,E^*)$ of all Lipschitz finite-rank operators from $X$ to $E^*$. Moreover, we give the notion of a Lipschitz tensor product of a base-point preserving Lipschitz map between pointed metric spaces $h\colon X\to Y$ and a bounded linear operator between Banach spaces $T\colon E\to F$ as the linear operator $h\boxtimes T$ from $X\boxtimes E$ to $Y\boxtimes F$, defined by 
$$
(h\boxtimes T)\left(\sum_{i=1}^n\delta_{(x_i,y_i)}\boxtimes e_i\right)=\sum_{i=1}^n\delta_{(h(x_i),h(y_i))}\boxtimes T(e_i).
$$

In Section \ref{3}, we consider a norm $\alpha$ on $X\boxtimes E$ and obtain a normed space $X\boxtimes_\alpha E$ and its completion $X\widehat{\boxtimes}_\alpha E$. We are interested in the called Lipschitz cross-norms which are those satisfying the condition: 
$$
\alpha\left(\delta_{(x,y)}\boxtimes e\right)=d(x,y)\left\|e\right\|\qquad \left(x,y\in X,\; e\in E\right),
$$
where $\delta_{(x,y)}\boxtimes e$ is the linear functional on $\Lipo(X,E^*)$ of the form
$$
\left(\delta_{(x,y)}\boxtimes e\right)(f)=\left\langle f(x)-f(y),e\right\rangle .
$$
Desirable attributes for Lipschitz cross-norms on $X\boxtimes E$ is that they behave well with respect to the formation of Lipschitz tensor products of functionals and operators. In this line, we introduce dualizable Lipschitz cross-norms and uniform Lipschitz cross-norms. A Lipschitz cross-norm $\alpha$ on $X\boxtimes E$ is called dualizable if given $g\in X^\#$ and $\phi\in E^*$, we have
$$
\left|\sum_{i=1}^n\left(g(x_i)-g(y_i)\right)\left\langle \phi,e_i\right\rangle\right|
\leq\Lip(g)\left\|\phi\right\|\alpha\left(\sum_{i=1}^n \delta_{(x_i,y_i)}\boxtimes e_i\right)
$$
for all $\sum_{i=1}^n \delta_{(x_i,y_i)}\boxtimes e_i\in X\boxtimes E$, and it is called uniform if given $h\in\Lipo(X,X)$ and $T\in\L(E;E)$, we have
$$
\alpha\left(\sum_{i=1}^n\delta_{(h(x_i),h(y_i))}\boxtimes T(e_i)\right)
\leq\Lip(h)\left\|T\right\|\alpha\left(\sum_{i=1}^n \delta_{(x_i,y_i)}\boxtimes e_i\right)
$$ 
for all $\sum_{i=1}^n \delta_{(x_i,y_i)}\boxtimes e_i\in X\boxtimes E$. Given a dualizable Lipschitz cross-norm $\alpha$ on $X\boxtimes E$, we also construct a norm $\alpha'$ on $ X^\#\boxast E^*$ called the associated Lipschitz norm of $\alpha$, since it satisfies the condition: 
$$
\alpha'(g\boxtimes\phi)=\Lip(g)\left\|\phi\right\|\qquad \left(g\in X^\#,\;\phi\in E^*\right).
$$
The space $X^\#\boxast E^*$ with the norm $\alpha'$ will be denoted by $X^\#\boxast_{\alpha'} E^*$ and its completion by $X^\#\widehat{\boxast}_{\alpha'} E^*$.

In Sections \ref{4} and \ref{5}, we investigate, respectively, the dual norm $L$ induced on $X\boxtimes E$ by the norm $\Lip$ of $\Lipo(X,E^*)$:
$$
L\left(\sum_{i=1}^n \delta_{(x_i,y_i)}\boxtimes e_i\right)
=\sup\left\{\left|\sum_{i=1}^n \left\langle f(x_i)-f(y_i),e_i\right\rangle\right|\colon f\in\Lipo(X,E^*),\ \Lip(f)\leq 1 \right\},
$$
and the Lipschitz injective norm $\varepsilon$ on $X\boxtimes E$:
$$
\varepsilon\left(\sum_{i=1}^n \delta_{(x_i,y_i)}\boxtimes e_i\right)=\sup\left\{\left|\sum_{i=1}^n\left(g(x_i)-g(y_i)\right)\left\langle\phi,e_i\right\rangle\right|\colon g\in B_{X^\#}, \; \phi\in B_{E^*}\right\}.
$$
The Lipschitz projective norm $\pi$ and the Lipschitz $p$-nuclear norm $d_p$ for $1\leq p\leq\infty$ are defined on $u\in X\boxtimes E$ as 
\begin{align*}
\pi(u)&=\inf\left\{\sum_{i=1}^n d(x_i,y_i)\left\|e_i\right\|\colon u=\sum_{i=1}^n\delta_{(x_i,y_i)}\boxtimes e_i\right\},\\
d_1(u)&=\inf\left\{\left(\sup_{g\in B_{X^\#}}\left(\max_{1\leq i\leq n}\lambda_i\left|g(x_i)-g(y_i)\right|\right)\right)\left(\sum_{i=1}^n\left\|e_i\right\|\right)\colon u=\sum_{i=1}^n \lambda_i\delta_{(x_i,y_i)}\boxtimes e_i,\; \left\{\lambda_i\right\}_{i=1}^n\subset\R^+\right\},\\
d_p(u)&=\inf\left\{\left(\sup_{g\in B_{X^\#}}\left(\sum_{i=1}^n\lambda_i^p\left|g(x_i)-g(y_i)\right|^p\right)^{\frac{1}{p'}}\right)\left(\sum_{i=1}^n\left\|e_i\right\|^p\right)^{\frac{1}{p}}\colon u=\sum_{i=1}^n \lambda_i\delta_{(x_i,y_i)}\boxtimes e_i,\; \left\{\lambda_i\right\}_{i=1}^n\subset\R^+\right\}\quad (1<p<\infty),\\
d_\infty(u)&=\inf\left\{\left(\sup_{g\in B_{X^\#}}\left(\sum_{i=1}^n\lambda_i\left|g(x_i)-g(y_i)\right|\right)\right)\left(\max_{1\leq i \leq n}\left\|e_i\right\|\right)\colon u=\sum_{i=1}^n \lambda_i\delta_{(x_i,y_i)}\boxtimes e_i,\; \left\{\lambda_i\right\}_{i=1}^n\subset\R^+\right\},
\end{align*}
where the infimum is taken over all such representations of $u$. Sections \ref{6} and \ref{7} are devoted to their study. All those norms on $X\boxtimes E$ are uniform and dualizable Lipschitz cross-norms. In fact, $\varepsilon$ is the least dualizable Lipschitz cross-norm and $\pi$ is the greatest Lipschitz cross-norm on $X\boxtimes E$. Furthermore, dualizable Lipschitz cross-norms $\alpha$ on $X\boxtimes E$ are characterized as those which satisfy the relation $\varepsilon\leq\alpha\leq\pi$. We also prove that $L$ agrees with $\pi$ and justify the terminologies ``injective" and ``projective" for the Lipschitz norms $\varepsilon$ and $\pi$, respectively.  
We identify $X\boxtimes_{\varepsilon} E$ and its completion $X\widehat{\boxtimes}_{\varepsilon} E$ with $\mathcal{F}((X^\#,\tau_p);E)$ and its closure in the operator norm topology, respectively.
We also show that the Lipschitz injective (projective) norm on $X\boxtimes E$ can be identified with the injective (respectively, projective) tensor norm on the Banach-space tensor product between the Lipschitz-free space over $X$ and $E$.

In Section \ref{8}, we deal with the space $\LipoF(X,E^*)$ of all Lipschitz finite-rank operators from $X$ to $E^*$ and its closure in the Lipschitz norm topology, the space of Lipschitz approximable operators from $X$ to $E^*$. It is proved that the former space is isometrically isomorphic to $X^\#\boxast_{\pi'} E^*$, and the latter to $X^\#\widehat{\boxast}_{\pi'} E^*$. The approximation property for Banach spaces was introduced by Grothendieck in his famous memory \cite{g}. We show that if $X^\#$ has the approximation property, then the space of all Lipschitz compact operators from $X$ to $E^*$ is isometrically isomorphic to $X^\#\widehat{\boxast}_{\pi'}E^*$ for any Banach space $E$. We close the paper giving a new expression of the norm $\pi'$.

\smallskip

\textbf{Notation.} Given two metric spaces $(X,d_X)$ and $(Y,d_Y)$, let us recall that a map $f\colon X\to Y$ is said to be Lipschitz if there is a real constant $C\geq 0$ such that $d_Y(f(x),f(y))\leq Cd_X(x,y)$ for all $x,y\in X$. The least constant $C$ for which the preceding inequality holds will be denoted by $\Lip(f)$, that is, 
$$
\Lip(f)=\sup\left\{\frac{d_Y(f(x),f(y))}{d_X(x,y)}\colon x,y\in X, \; x\neq y\right\}.
$$ 
A pointed metric space $X$ is a metric space with a base point in $X$ which we always will denote by $0$. We will consider a Banach space $E$ over $\K$ as a pointed metric space with the zero vector as the base point. As is customary, $B_E$ and $S_E$ stand for the closed unit ball of $E$ and the unit sphere of $E$, respectively. Given two pointed metric spaces $X$ and $Y$,  $\Lipo(X,Y)$ denotes the set of all base-point preserving Lipschitz maps from $X$ to $Y$. 

For two linear spaces $E$ and $F$, $L(E;F)$ stands for the linear space of all linear operators from $E$ into $F$. In the case that $E$ and $F$ are locally convex Hausdorff spaces, we denote by $\L(E;F)$ the vector space of all continuous linear operators from $E$ into $F$, and by $\mathcal{F}(E;F)$ its subspace of finite-rank operators. When $F=\K$, we write $E'$ instead of $L(E;\K)$ and $E^*$ in place of $\L(E;\K)$. Unless stated otherwise, if $E$ and $F$ are Banach spaces, $\L(E;F)$ is endowed with its natural norm topology. For each $e\in E$ and $\phi\in E'$, we frequently will write $\langle\phi,e\rangle$ instead of $\phi(e)$.


\section{Lipschitz tensor products}\label{1}

The Lipschitz tensor product of a pointed metric space $X$ and a Banach space $E$, which we will denote from now on by $X\boxtimes E$, can be constructed as a space of linear functionals on $\Lipo(X,E^*)$. 

\begin{definition}\label{def-Lipschitz-tensor-product}
Let $X$ be a pointed metric space and $E$ a Banach space. For each $x\in X$, let $\delta_{(x,0)}\colon\Lipo(X,E^*)\to E^*$ be the linear map defined by
$$
\delta_{(x,0)}(f)=f(x) \qquad \left(f\in\Lipo(X,E^*)\right).
$$
For each $(x,y)\in X^2$, let $\delta_{(x,y)}\colon\Lipo(X,E^*)\to E^*$ be the linear map given by
$$
\delta_{(x,y)}=\delta_{(x,0)}-\delta_{(y,0)}.
$$
Let $\Delta(X,E^*)$ denote the linear subspace of $L(\Lipo(X,E^*);E^*)$ spanned by the set $\left\{\delta_{(x,y)}\colon (x,y)\in X^2\right\}$. For any $\gamma\in\Delta(X,E^*)$ and $e\in E$, let $\gamma\boxtimes e\colon\Lipo(X,E^*)\to\K$ be the linear functional given by 
$$
(\gamma\boxtimes e)(f)=\left\langle\gamma(f),e\right\rangle\qquad\left(f\in\Lipo(X,E^*)\right).
$$ 
In particular, for any $(x,y)\in X^2$ and $e\in E$, let $\delta_{(x,y)}\boxtimes e$ be the element of $\Lipo(X,E^*)'$ defined by
$$
\left(\delta_{(x,y)}\boxtimes e\right)(f)
=\left\langle \delta_{(x,y)}(f),e\right\rangle
=\left\langle \delta_{(x,0)}(f)-\delta_{(y,0)}(f),e\right\rangle
=\left\langle f(x)-f(y),e\right\rangle , \qquad \forall f\in\Lipo(X,E^*).
$$
The Lipschitz tensor product $X \boxtimes E$ is defined as the vector subspace of $\Lipo(X,E^*)'$ spanned by the set $$\left\{\delta_{(x,y)}\boxtimes e\colon (x,y)\in X^2,\, e\in E\right\}.$$
\end{definition}

The following properties of the Lipschitz tensor product can be checked easily. 

\begin{lemma}\label{lem-1}
Let $\lambda\in\K$, $(x,y),(x_1,y_1),(x_2,y_2)\in X^2$ and $e,e_1,e_2\in E$. 
\begin{enumerate}
\item $\lambda\left(\delta_{(x,y)}\boxtimes e\right)=(\lambda\delta_{(x,y)})\boxtimes e=\delta_{(x,y)}\boxtimes(\lambda e)$.
\item $\left(\delta_{(x_1,y_1)}+\delta_{(x_2,y_2)}\right)\boxtimes e=\delta_{(x_1,y_1)}\boxtimes e+\delta_{(x_2,y_2)}\boxtimes e$.
\item $\delta_{(x,y)}\boxtimes (e_1+e_2)=\delta_{(x,y)}\boxtimes e_1+\delta_{(x,y)}\boxtimes e_2$.
\item $\delta_{(x,x)}\boxtimes e=\delta_{(x,y)}\boxtimes 0=0$.
\end{enumerate}
\end{lemma}

We say that $\delta_{(x,y)}\boxtimes e$ is an elementary Lipschitz tensor. Note that each element $u$ in $X\boxtimes E$ is of the form $u=\sum_{i=1}^n\lambda_i(\delta_{(x_i,y_i)}\boxtimes e_i)$, where $n\in\N$, $\lambda_i\in\K$, $(x_i,y_i)\in X^2$ and $e_i\in E$. This representation of $u$ is not unique. It is worth noting that each element $u$ of $X\boxtimes E$ can be represented as $u=\sum_{i=1}^n \delta_{(x_i,y_i)}\boxtimes e_i$ since $\lambda(\delta_{(x,y)}\boxtimes e)=\delta_{(x,y)}\boxtimes(\lambda e)$. This representation of $u$ admits the following refinement. 

\begin{lemma}\label{lem-0}
Every nonzero Lipschitz tensor $u\in X\boxtimes E$ has a representation in the form $\sum_{i=1}^m\delta_{(z_i,0)}\boxtimes d_i$, where 
$$
m=\min\left\{k\in\N\colon \exists z_1,\ldots,z_k\in X,\ d_1,\ldots,d_k\in E \; | \; u=\sum_{i=1}^k\delta_{(z_i,0)}\boxtimes d_i\right\}
$$ 
and the points $z_1,\ldots,z_m$ in $X$ are distinct from the base point $0$ of $X$ and pairwise distinct. 
\end{lemma}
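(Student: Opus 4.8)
The plan is to show first that the collection of integers $k$ admitting a representation of $u$ of the indicated type is a nonempty subset of $\N$, so that $m$ is well defined, and then to derive both geometric conclusions (no term sits at the base point, and the points are pairwise distinct) purely from the minimality of $m$. No genuinely difficult analysis is involved; the whole argument is a bookkeeping reduction, and the only thing to watch is that every manipulation I perform on a representation can never increase the number of summands, so that each reduction genuinely contradicts the choice of $m$.

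First I would verify that the set over which the minimum is taken is nonempty. Since $u\neq 0$ belongs to $X\boxtimes E$, it has some representation $u=\sum_{i=1}^n\delta_{(x_i,y_i)}\boxtimes e_i$. Using the defining relation $\delta_{(x_i,y_i)}=\delta_{(x_i,0)}-\delta_{(y_i,0)}$ together with the linearity properties recorded in Lemma \ref{lem-1}(i)--(ii), each summand splits as $\delta_{(x_i,y_i)}\boxtimes e_i=\delta_{(x_i,0)}\boxtimes e_i-\delta_{(y_i,0)}\boxtimes e_i$. Hence $u$ is already expressible as a finite sum of elementary tensors of the form $\delta_{(z,0)}\boxtimes d$, so $\{k\in\N\colon\ldots\}$ is nonempty and $m$ exists by the well-ordering principle. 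I then fix a representation $u=\sum_{i=1}^m\delta_{(z_i,0)}\boxtimes d_i$ attaining this minimum.

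Next I would rule out the base point. If some $z_i=0$, then $\delta_{(z_i,0)}=\delta_{(0,0)}$, and Lemma \ref{lem-1}(iv) (applied with $x=y=0$) shows that the corresponding term $\delta_{(0,0)}\boxtimes d_i$ is the zero functional; deleting it produces a representation of $u$ with $m-1$ terms, contradicting the minimality of $m$. Therefore every $z_i$ is distinct from $0$. Finally, to obtain pairwise distinctness, suppose $z_i=z_j$ for some $i\neq j$. By Lemma \ref{lem-1}(iii) the two summands merge into one, $\delta_{(z_i,0)}\boxtimes d_i+\delta_{(z_j,0)}\boxtimes d_j=\delta_{(z_i,0)}\boxtimes(d_i+d_j)$, yielding a representation of $u$ with at most $m-1$ terms (one fewer summand, and even fewer should $d_i+d_j=0$, in which case Lemma \ref{lem-1}(iv) lets me discard the combined term as well). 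This again contradicts the minimality of $m$, so the points $z_1,\ldots,z_m$ must be pairwise distinct, completing the argument.
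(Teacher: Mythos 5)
Your proposal is correct and follows essentially the same route as the paper: split each $\delta_{(x_i,y_i)}\boxtimes e_i$ into $\delta_{(x_i,0)}\boxtimes e_i-\delta_{(y_i,0)}\boxtimes e_i$ to see that a representation by terms anchored at the base point exists, invoke the well-ordering principle to get $m$, and then use minimality to delete any zero term ($z_i=0$) and to merge coinciding points via Lemma \ref{lem-1}. No gaps; the argument matches the paper's proof step for step.
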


\begin{proof}
Let $u\in X\boxtimes E$ and let $\sum_{i=1}^n \delta_{(x_i,y_i)}\boxtimes e_i$ be a representation of $u$. 
We have 
\begin{align*}
u&=\sum_{i=1}^n\delta_{(x_i,y_i)}\boxtimes e_i\\
&=\sum_{i=1}^n \left(\delta_{(x_i,0)}-\delta_{(y_i,0)}\right)\boxtimes e_i\\
&=\sum_{i=1}^n \delta_{(x_i,0)}\boxtimes e_i+\sum_{i=1}^n \delta_{(y_i,0)}\boxtimes (-e_i)\\
&=\sum_{i=1}^n \delta_{(x_i,0)}\boxtimes e_i+\sum_{i=n+1}^{2n} \delta_{(y_{i-n},0)}\boxtimes (-e_{i-n})\\
&=\sum_{i=1}^{2n} \delta_{(z_i,0)}\boxtimes d_i,
\end{align*}
where 
$$
\delta_{(z_i,0)}\boxtimes d_i
=\left\{\begin{array}{lll}
\delta_{(x_i,0)}\boxtimes e_i&\text{ if }& i=1,\ldots,n,\\
\delta_{(y_{i-n},0)}\boxtimes (-e_{i-n})&\text{ if }& i=n+1,\ldots,2n.
\end{array}\right.
$$
Then, by the well-ordering principle of $\N$, there exists a smallest natural number $m$ for which there is a representation of $u$ in the form $\sum_{i=1}^m\delta_{(z_i,0)}\boxtimes d_i$. Since $u\neq 0$, it is clear that $z_i\neq 0$ for some $i\in\{1,\ldots,m\}$. This implies that $z_j\neq 0$ for all $j\in\{1,\ldots,m\}$. Otherwise, observe that $\sum_{i=1,i\neq j}^m\delta_{(z_i,0)}\boxtimes d_i$ would be a representation of $u$ containing $m-1$ terms and this contradicts the definition of $m$. Moreover, if $z_j=z_k$ for some $j,k\in\{1,\ldots,m\}$ with $j\neq k$, we would have
$$
u=\sum_{i=1,j\neq i\neq k}^m\delta_{(z_i,0)}\boxtimes d_i + \left(\delta_{(z_j,0)}\boxtimes (d_j+d_k)\right),
$$
and this is impossible. Hence the points $z_i$ are pairwise distinct.
\end{proof}

We can concatenate the representations of two elements of $X\boxtimes E$ to get a representation of their sum. 

\begin{lemma}\label{lem-conca}
Let $u,v\in X\boxtimes E$ and let $\sum_{i=1}^n \delta_{(x_i,y_i)}\boxtimes e_i$ and $\sum_{i=1}^m \delta_{(x'_i,y'_i)}\boxtimes e'_i$ be representations of $u$ and $v$, respectively. Then $\sum_{i=1}^{n+m} \delta_{(x''_i,y''_i)}\boxtimes e''_i$, where 
$$
(x''_i,y''_i)
=\left\{
\begin{array}{lll}
(x_i,y_i)&\text{ if }& i=1,\ldots,n,\\
(x'_{i-n},y'_{i-n})&\text{ if }& i=n+1,\ldots,n+m,
\end{array}\right.
\qquad 
e''_i
=\left\{
\begin{array}{lll}
e_i&\text{ if }& i=1,\ldots,n,\\
e'_{i-n}&\text{ if }& i=n+1,\ldots,n+m,
\end{array}\right.
$$
is a representation of $u+v$. 
\end{lemma}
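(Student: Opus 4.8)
The plan is to reduce the statement to the additivity of finite sums of linear functionals, exploiting that by Definition~\ref{def-Lipschitz-tensor-product} every element of $X\boxtimes E$ is a linear functional in $\Lipo(X,E^*)'$, where the vector space operations are computed pointwise on $\Lipo(X,E^*)$. First I would form the concatenated sum $\sum_{i=1}^{n+m}\delta_{(x''_i,y''_i)}\boxtimes e''_i$ and split the index range at $n$, writing it as $\sum_{i=1}^{n}\delta_{(x''_i,y''_i)}\boxtimes e''_i+\sum_{i=n+1}^{n+m}\delta_{(x''_i,y''_i)}\boxtimes e''_i$. This is legitimate since a finite sum of functionals may be split along any partition of its index set.

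For the first block, the prescriptions $(x''_i,y''_i)=(x_i,y_i)$ and $e''_i=e_i$ for $i=1,\ldots,n$ make each summand identically equal to $\delta_{(x_i,y_i)}\boxtimes e_i$, so this block coincides with the given representation of $u$. For the second block, I would reindex by $j=i-n$, so that $j$ runs from $1$ to $m$; the prescriptions $(x''_i,y''_i)=(x'_{i-n},y'_{i-n})$ and $e''_i=e'_{i-n}$ turn the summand into $\delta_{(x'_j,y'_j)}\boxtimes e'_j$, whence this block coincides with the given representation of $v$. Adding the two blocks yields $u+v$, which shows that the concatenated family is indeed a representation of $u+v$.

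The only point requiring care is purely bookkeeping, namely verifying that the piecewise definitions of $(x''_i,y''_i)$ and $e''_i$ match the original data after the shift of index in the second block. There is no genuine obstacle here: the conclusion is immediate from the pointwise definition of addition in $\Lipo(X,E^*)'$ together with the elementary splitting of a finite sum, and no appeal to the reduced representation of Lemma~\ref{lem-0} or to the relations of Lemma~\ref{lem-1} is needed.
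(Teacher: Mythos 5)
Your argument is correct: splitting the concatenated sum at index $n$, reindexing the second block, and using that addition in $\Lipo(X,E^*)'$ is pointwise gives exactly $u+v$. The paper omits the proof of this lemma as immediate, and your reasoning is precisely the intended (and only natural) justification.
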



We now describe the action of a Lipschitz tensor $u\in X\boxtimes E$ on a function $f\in\Lipo(X,E^*)$. 

\begin{lemma}\label{lem-2}
Let $u=\sum_{i=1}^n \delta_{(x_i,y_i)}\boxtimes e_i\in X\boxtimes E$ and $f\in\Lipo(X,E^*)$. Then  
$$
u(f)=\sum_{i=1}^n\left\langle f(x_i)-f(y_i),e_i\right\rangle .
$$
\end{lemma}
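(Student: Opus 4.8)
The plan is to invoke nothing beyond the definition of $u$ as a linear functional on $\Lipo(X,E^*)$ together with the explicit formula for the action of an elementary Lipschitz tensor recorded in Definition \ref{def-Lipschitz-tensor-product}. First I would recall that, by construction, $X\boxtimes E$ is a vector subspace of $\Lipo(X,E^*)'$, so that $u$ is genuinely a linear functional on $\Lipo(X,E^*)$ and the value $u(f)$ is unambiguous once $u$ is fixed; in particular, different representations of $u$ as a finite sum of elementary tensors all yield the same functional and hence the same number $u(f)$.

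Next, writing $u=\sum_{i=1}^n \delta_{(x_i,y_i)}\boxtimes e_i$ for the given representation and using linearity of the evaluation map $g\mapsto g(f)$ on $\Lipo(X,E^*)'$, I would split $u(f)=\sum_{i=1}^n\left(\delta_{(x_i,y_i)}\boxtimes e_i\right)(f)$. Each summand is then computed directly from the identity $\left(\delta_{(x,y)}\boxtimes e\right)(f)=\left\langle f(x)-f(y),e\right\rangle$ supplied in Definition \ref{def-Lipschitz-tensor-product}, applied with $(x,y,e)=(x_i,y_i,e_i)$. Summing these identities over $i=1,\ldots,n$ yields exactly $\sum_{i=1}^n\left\langle f(x_i)-f(y_i),e_i\right\rangle$, which is the asserted formula.

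I expect no real obstacle here: the statement is a bookkeeping consequence of how the elementary tensors were defined in Definition \ref{def-Lipschitz-tensor-product}, and the only point deserving a moment's care is the implicit consistency claim that the value $\sum_{i=1}^n\left\langle f(x_i)-f(y_i),e_i\right\rangle$ is independent of the chosen representation of $u$. This is automatic, since $u$ is the functional itself rather than a purely formal expression: two representations that coincide as elements of $\Lipo(X,E^*)'$ must agree at every $f$. The lemma can therefore be read simultaneously as an evaluation formula and as a confirmation of this well-definedness.
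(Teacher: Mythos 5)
Your proof is correct and matches what the paper intends: the lemma is an immediate consequence of Definition \ref{def-Lipschitz-tensor-product}, since $u$ is by construction an element of $\Lipo(X,E^*)'$ and each elementary tensor acts by $\left(\delta_{(x,y)}\boxtimes e\right)(f)=\left\langle f(x)-f(y),e\right\rangle$; the paper accordingly omits the proof. Your remark on representation-independence being automatic (because $u$ is the functional itself) is also exactly the right observation.
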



Our next aim is to characterize the zero Lipschitz tensor. For it we need the following Lipschitz operators. 

\begin{lemma}\label{lem-3}
Let $g\in X^\#$ and $\phi\in E^*$. The map $g\cdot\phi\colon X\to E^*$, given by $(g\cdot\phi)(x)=g(x)\phi$, belongs to $\Lipo(X,E^*)$ and $\Lip(g\cdot\phi)=\Lip(g)\left\|\phi\right\|$. 
\end{lemma}

\begin{proof}
Clearly, $g\cdot\phi$ is well defined. Let $x,y\in X$. For any $e\in E$, we obtain 
$$
\left|\left\langle(g\cdot\phi)(x)-(g\cdot\phi)(y),e\right\rangle\right|
=\left|\left\langle\left(g(x)-g(y)\right)\phi,e\right\rangle\right|
=\left|g(x)-g(y)\right|\left|\left\langle\phi,e\right\rangle\right|
\leq\Lip(g)d(x,y)\left\|\phi\right\|\left\|e\right\|,
$$
and so $\left\|(g\cdot\phi)(x)-(g\cdot\phi)(y)\right\|\leq\Lip(g)\left\|\phi\right\|d(x,y)$. Then $g\cdot\phi\in\Lipo(X,E^*)$ and $\Lip(g\cdot\phi)\leq\Lip(g)\left\|\phi\right\|$. For the converse inequality, note that 
$$
\left|g(x)-g(y)\right|\left\|\phi\right\|
=\left\|(g\cdot\phi)(x)-(g\cdot\phi)(y)\right\|
\leq\Lip(g\cdot\phi)d(x,y)
$$ 
for all $x,y\in X$, and therefore $\Lip(g)\left\|\phi\right\|\leq\Lip(g\cdot\phi)$.
\end{proof}

\begin{proposition}\label{pro-0}
If $u=\sum_{i=1}^n \delta_{(x_i,y_i)}\boxtimes e_i\in X\boxtimes E$, then the following assertions are equivalent:
\begin{enumerate}
	\item $u=0$.
	\item $\sum_{i=1}^n\left(g(x_i)-g(y_i)\right)\left\langle \phi,e_i\right\rangle=0$ for every $g\in B_{X^\#}$ and $\phi\in B_{E^*}$.
	\item $\sum_{i=1}^n\left(g(x_i)-g(y_i)\right)e_i=0$ for every $g\in B_{X^\#}$.
\end{enumerate}
\end{proposition}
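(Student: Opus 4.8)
The plan is to prove the cycle of implications $(i)\Rightarrow(ii)\Rightarrow(iii)\Rightarrow(i)$. For $(i)\Rightarrow(ii)$ I would feed the tensor a cleverly chosen test function: given $g\in B_{X^\#}$ and $\phi\in B_{E^*}$, Lemma \ref{lem-3} provides $g\cdot\phi\in\Lipo(X,E^*)$, and Lemma \ref{lem-2} gives
$$
u(g\cdot\phi)=\sum_{i=1}^n\langle(g(x_i)-g(y_i))\phi,e_i\rangle=\sum_{i=1}^n(g(x_i)-g(y_i))\langle\phi,e_i\rangle .
$$
Since $u=0$ means $u(f)=0$ for every $f\in\Lipo(X,E^*)$, the right-hand side vanishes, which is exactly $(ii)$. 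For $(ii)\Rightarrow(iii)$, I would fix $g\in B_{X^\#}$ and set $v_g=\sum_{i=1}^n(g(x_i)-g(y_i))e_i\in E$; condition $(ii)$ says $\langle\phi,v_g\rangle=0$ for all $\phi\in B_{E^*}$, hence for all $\phi\in E^*$, and since $E^*$ separates the points of $E$ (Hahn--Banach), $v_g=0$, which is $(iii)$.

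The implication $(iii)\Rightarrow(i)$ is the heart of the matter, and the idea is to reduce to a representation with geometrically separated points and then extract the coefficients one at a time using peak Lipschitz functions. Rewriting each $\delta_{(x_i,y_i)}=\delta_{(x_i,0)}-\delta_{(y_i,0)}$, grouping repeated points and discarding terms based at $0$ (which are null, since $f(0)=0$ forces $\delta_{(0,0)}=0$), I would bring $u$ to the form $\sum_{j=1}^m\delta_{(z_j,0)}\boxtimes c_j$ with the $z_j$ pairwise distinct and all different from $0$; this is precisely the refinement furnished by Lemma \ref{lem-0}. Because $g(0)=0$, the scalar quantity in $(iii)$ is unchanged under this rewriting, so $(iii)$ becomes $\sum_{j=1}^m g(z_j)c_j=0$ for every $g\in B_{X^\#}$.

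To finish, for a fixed index $k$ I would set $\rho_k=\min(\{d(z_k,z_j):j\neq k\}\cup\{d(z_k,0)\})>0$ and define $g_k(x)=\max(0,\rho_k-d(x,z_k))$. Then $g_k$ is $1$-Lipschitz and vanishes at $0$, so $g_k\in B_{X^\#}$, while $g_k(z_k)=\rho_k$ and $g_k(z_j)=0$ for $j\neq k$. Plugging $g_k$ into the identity above yields $\rho_k c_k=0$, so $c_k=0$; as $k$ is arbitrary, every $c_k=0$ and hence $u=0$, which is $(i)$.

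The main obstacle is the construction and verification of the peak functions $g_k$: one must confirm that $\rho_k>0$, which rests precisely on the points $z_j$ being pairwise distinct and nonzero (guaranteed by the reduction via Lemma \ref{lem-0}), and that each $g_k$ actually lies in the unit ball $B_{X^\#}$ so that hypothesis $(iii)$ applies to it directly. Everything else, namely the splitting-and-grouping reduction and the appeal to Hahn--Banach, is routine bookkeeping.
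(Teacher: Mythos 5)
Your proof is correct and follows essentially the same route as the paper: $g\cdot\phi$ for (i)$\Rightarrow$(ii), Hahn--Banach separation for (ii)$\Rightarrow$(iii), reduction to a representation $\sum_j\delta_{(z_j,0)}\boxtimes c_j$ with distinct nonzero $z_j$, and the same peak functions $\max\{0,\rho-d(\cdot,z_j)\}$ for (iii)$\Rightarrow$(i). The only (harmless) difference is in how you pass from $\sum_i(g(x_i)-g(y_i))e_i=0$ to $\sum_j g(z_j)c_j=0$: you track the vector sum through the explicit split-group-discard rewriting, whereas the paper invokes Lemma \ref{lem-0} as a black box and recovers the same identity by applying the already-proved implication (i)$\Rightarrow$(iii) to $u-u=0$.
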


\begin{proof}
(i) implies (ii): If $u=0$, then $u(f)=0$ for all $f\in\Lipo(X,E^*)$. Since $u=\sum_{i=1}^n \delta_{(x_i,y_i)}\boxtimes e_i$, it follows that $\sum_{i=1}^n\langle f(x_i)-f(y_i),e_i\rangle=0$ for all $f\in\Lipo(X,E^*)$ by Lemma \ref{lem-2}. For any $g\in B_{X^\#}$ and $\phi\in B_{E^*}$, the function $g\cdot\phi$ is in $\Lipo(X,E^*)$ by Lemma \ref{lem-3}, and therefore we have   
$$
\sum_{i=1}^n\left(g(x_i)-g(y_i)\right)\left\langle\phi,e_i\right\rangle
=\sum_{i=1}^n\left\langle\left(g(x_i)-g(y_i)\right)\phi,e_i\right\rangle
=\sum_{i=1}^n\left\langle(g\cdot\phi)(x_i)-(g\cdot\phi)(y_i),e_i\right\rangle
=0.
$$

(ii) implies (iii): If (ii) holds, then $\langle\phi,\sum_{i=1}^n(g(x_i)-g(y_i))e_i\rangle=0$ for every $g\in B_{X^\#}$ and $\phi\in B_{E^*}$. Since $B_{E^*}$ separates the points of $E$, it follows that $\sum_{i=1}^n(g(x_i)-g(y_i))e_i=0$ for all $g\in B_{X^\#}$. 

(iii) implies (i): By Lemma \ref{lem-0}, we can write $u=\sum_{i=1}^m\delta_{(z_i,0)}\boxtimes d_i$, where the points $z_i$ in $X$ are pairwise distinct and different from the base point $0$. It follows that  
$$
\sum_{i=1}^n \delta_{(x_i,y_i)}\boxtimes e_i+\sum_{i=1}^m\delta_{(z_i,0)}\boxtimes (-d_i)=u-u=0,
$$
and, by using the fact proved above that (i) implies (iii), we have  
$$
\sum_{i=1}^n\left(g(x_i)-g(y_i)\right)e_i+\sum_{i=1}^m\left(g(z_i)-g(0)\right)(-d_i)=0
$$
for all $g\in B_{X^\#}$. If (iii) holds, we get that 
$$
\sum_{i=1}^mg(z_i)d_i=\sum_{i=1}^n\left(g(x_i)-g(y_i)\right)e_i=0
$$
for all $g\in B_{X^\#}$. Set 
$$
r=\min\left(\left\{d(z_i,z_j)\colon i,j\in\{1,\ldots,m\},\ i\neq j\right\}\cup\left\{d(z_i,0)\colon i\in\{1,\ldots,m\}\right\}\right).
$$
Clearly, $r>0$. Given $j\in\{1,\ldots,m\}$, define $g_j\colon X\to\R$ by  
$$
g_j(x)=\max\left\{0,r-d(x,z_j)\right\}.
$$
It is easy to check that $g_j\in B_{X^\#}$, $g_j(z_j)=r$ and $g_j(z_i)=0$ for all $i\in\{1,\ldots,m\}\setminus\{j\}$. Hence $
0=\sum_{i=1}^mg_j(z_i)d_i=rd_j$, therefore $d_1=d_2=\cdots=d_m=0$ and thus $u=0$.
\end{proof}

According to Definition \ref{def-Lipschitz-tensor-product}, $X\boxtimes E$ is a linear subspace of $\Lipo(X,E^*)'$. Furthermore, we have the next fact.

\begin{theorem}\label{theo-dual-pair}
$\left\langle X\boxtimes E,\Lipo(X,E^*)\right\rangle$ forms a dual pair, where the bilinear form $\left\langle \cdot,\cdot\right\rangle$ associated to the dual pair is given, for $u=\sum_{i=1}^n \delta_{(x_i,y_i)}\boxtimes e_i\in X\boxtimes E$ and $f\in\Lipo(X,E^*)$, by 
$$
\left\langle u,f\right\rangle=\sum_{i=1}^n \left\langle f(x_i)-f(y_i),e_i\right\rangle .
$$
\end{theorem}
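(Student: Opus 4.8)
The plan is to verify the two defining properties of a dual pair for the bilinear form $\langle u,f\rangle=\sum_{i=1}^n\langle f(x_i)-f(y_i),e_i\rangle$: that it is well defined and bilinear, and that it is \emph{separating} in each variable (i.e., nondegenerate on both sides). Bilinearity is immediate from the definitions, but well-definedness requires care because, as noted after Lemma~\ref{lem-1}, the representation $u=\sum_{i=1}^n\delta_{(x_i,y_i)}\boxtimes e_i$ is not unique. First I would observe that by Lemma~\ref{lem-2} the quantity $\sum_{i=1}^n\langle f(x_i)-f(y_i),e_i\rangle$ is nothing other than $u(f)$, the value of the functional $u\in\Lipo(X,E^*)'$ at $f$; since $u$ is an element of the algebraic dual, this value depends only on $u$ and $f$ and not on the chosen representation. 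Thus $\langle u,f\rangle=u(f)$ is automatically well defined and linear in $f$, and linearity in $u$ follows from Lemma~\ref{lem-conca} (concatenation of representations realizes the sum, and scalars pass through by Lemma~\ref{lem-1}(i)).

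The substantive content is the separation in the two variables. Separation in the first variable means: if $\langle u,f\rangle=0$ for every $f\in\Lipo(X,E^*)$, then $u=0$. But $\langle u,f\rangle=u(f)$, so this hypothesis says exactly that $u$ vanishes as a functional on $\Lipo(X,E^*)$, which is the statement $u=0$ in $\Lipo(X,E^*)'$. Hence this direction is essentially tautological given the identification $\langle u,f\rangle=u(f)$; alternatively, it is precisely the implication (ii)$\Rightarrow$(i) packaged in Proposition~\ref{pro-0}, since testing against all $f$ includes testing against the functions $g\cdot\phi$ of Lemma~\ref{lem-3}.

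Separation in the second variable is the genuinely nontrivial part, and I expect it to be the main obstacle: I must show that if $f\in\Lipo(X,E^*)$ satisfies $\langle u,f\rangle=0$ for every $u\in X\boxtimes E$, then $f=0$. Specializing $u=\delta_{(x,0)}\boxtimes e$ gives $\langle u,f\rangle=\langle f(x),e\rangle$, so the hypothesis yields $\langle f(x),e\rangle=0$ for all $x\in X$ and all $e\in E$. Since $E$ separates the points of $E^*$ (the canonical pairing $\langle\cdot,e\rangle$ ranging over $e\in E$ distinguishes elements of $E^*$, because $E^*$ is a space of functionals on $E$), this forces $f(x)=0$ for every $x\in X$, that is, $f=0$. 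The only point needing attention here is this separation property of the pairing between $E$ and $E^*$, which is standard.

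Putting these together, the bilinear form $\langle\cdot,\cdot\rangle$ is well defined, bilinear, and separating in both variables, so $\langle X\boxtimes E,\Lipo(X,E^*)\rangle$ is a dual pair. I would present the argument in the order: (1) identify $\langle u,f\rangle$ with $u(f)$ via Lemma~\ref{lem-2} to dispatch well-definedness and bilinearity; (2) deduce left-separation from the fact that $u=0$ in $\Lipo(X,E^*)'$ means $u(f)=0$ for all $f$; (3) prove right-separation by evaluating on elementary tensors $\delta_{(x,0)}\boxtimes e$ and invoking that $E$ separates points of $E^*$. The reuse of Proposition~\ref{pro-0} and Lemma~\ref{lem-2} makes steps (1) and (2) short, leaving the elementary-tensor argument of step (3) as the only place where a new (though elementary) observation enters.
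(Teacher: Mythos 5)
Your argument is correct and coincides with the paper's own proof: both identify $\left\langle u,f\right\rangle$ with $u(f)$ via Lemma \ref{lem-2} to obtain well-definedness, bilinearity, and the (tautological) separation of points of $X\boxtimes E$, and both obtain the separation of points of $\Lipo(X,E^*)$ by testing against the elementary tensors $\delta_{(x,0)}\boxtimes e$. No gaps.
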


\begin{proof}
Note that $\langle u,f\rangle=u(f)$ by Lemma \ref{lem-2}. It is plain that $\langle\cdot,\cdot\rangle\colon (X\boxtimes E)\times\Lipo(X,E^*)\to\K$ is a well-defined bilinear map and that $\Lipo(X,E^*)$ separates points of $X\boxtimes E$. 
Moreover, if $f\in\Lipo(X,E^*)$ and $\langle u,f\rangle=0$ for all $u\in X\boxtimes E$, then $\left\langle f(x),e\right\rangle=\left\langle\delta_{(x,0)}\boxtimes e,f\right\rangle=0$ for all $x\in X$ and $e\in E$. This implies that $f=0$ and thus $X\boxtimes E$ separates points of $\Lipo(X,E^*)$. This completes the proof of the theorem.
\end{proof}

Since $\left\langle X\boxtimes E,\Lipo(X,E^*)\right\rangle$ is a dual pair, $\Lipo(X,E^*)$ can be identified with a linear subspace of $(X\boxtimes E)'$ as follows. 

\begin{corollary}\label{linearization}
For every map $f\in\Lipo(X,E^*)$, the functional $\Lambda(f)\colon X\boxtimes E\to\K$, given by 
$$
\Lambda(f)(u)
=\sum_{i=1}^n\left\langle f(x_i)-f(y_i),e_i\right\rangle
$$
for $u=\sum_{i=1}^n \delta_{(x_i,y_i)}\boxtimes e_i\in X\boxtimes E$, is linear. We say that $\Lambda(f)$ is the linear functional on $X\boxtimes E$ associated to $f$. The map $f\mapsto \Lambda(f)$ is a linear monomorphism from $\Lipo(X,E^*)$ into $(X\boxtimes E)'$. 
\end{corollary}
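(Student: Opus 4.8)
The plan is to recognize that everything reduces to the dual-pair structure already established in Theorem \ref{theo-dual-pair}. The crucial observation is that $\Lambda(f)(u)$ is nothing other than the value $\langle u,f\rangle$ of the bilinear form associated to the dual pair $\langle X\boxtimes E,\Lipo(X,E^*)\rangle$; indeed, by Lemma \ref{lem-2} we have $\langle u,f\rangle = u(f) = \sum_{i=1}^n\langle f(x_i)-f(y_i),e_i\rangle$, which matches the defining formula for $\Lambda(f)(u)$ verbatim. So no new computation is needed beyond unwinding this identification.

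First I would fix $f\in\Lipo(X,E^*)$ and verify that $\Lambda(f)\in(X\boxtimes E)'$. Well-definedness (that the formula does not depend on the chosen representation $u=\sum_{i=1}^n\delta_{(x_i,y_i)}\boxtimes e_i$) and linearity in $u$ both follow immediately from the fact, recorded in Theorem \ref{theo-dual-pair}, that $\langle\cdot,\cdot\rangle$ is a well-defined bilinear map on $(X\boxtimes E)\times\Lipo(X,E^*)$: fixing the second slot gives a well-defined linear functional on $X\boxtimes E$.

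Next I would check that $f\mapsto\Lambda(f)$ is linear. For $f_1,f_2\in\Lipo(X,E^*)$ and $\lambda_1,\lambda_2\in\K$, bilinearity of $\langle\cdot,\cdot\rangle$ in its second argument gives, for every $u\in X\boxtimes E$,
$$
\Lambda(\lambda_1 f_1+\lambda_2 f_2)(u)=\langle u,\lambda_1 f_1+\lambda_2 f_2\rangle=\lambda_1\langle u,f_1\rangle+\lambda_2\langle u,f_2\rangle=\bigl(\lambda_1\Lambda(f_1)+\lambda_2\Lambda(f_2)\bigr)(u),
$$
so $\Lambda$ is linear. Finally, for injectivity, suppose $\Lambda(f)=0$, i.e.\ $\langle u,f\rangle=0$ for all $u\in X\boxtimes E$. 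Since Theorem \ref{theo-dual-pair} asserts precisely that $X\boxtimes E$ separates the points of $\Lipo(X,E^*)$, this forces $f=0$, so $\Lambda$ is a monomorphism.

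I do not anticipate a genuine obstacle here: the corollary is a direct consequence of the dual-pair theorem, and the only thing to be careful about is making the identification $\Lambda(f)(u)=\langle u,f\rangle$ explicit so that well-definedness, linearity in both variables, and injectivity are all read off from the two separation statements in Theorem \ref{theo-dual-pair} rather than re-proved from scratch.
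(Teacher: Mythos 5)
Your proposal is correct and follows essentially the same route as the paper: both identify $\Lambda(f)(u)$ with the pairing $\left\langle u,f\right\rangle$ from Theorem \ref{theo-dual-pair}, read off well-definedness and linearity from the bilinearity of that pairing, and deduce injectivity from the fact that $X\boxtimes E$ separates points of $\Lipo(X,E^*)$. No issues.
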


\begin{proof}
Let $f\in\Lipo(X,E^*)$. By Theorem \ref{theo-dual-pair}, note that $\Lambda(f)(u)=\left\langle u,f\right\rangle$ for all $u\in X\boxtimes E$. It is immediate that $\Lambda(f)$ is a well-defined linear functional on $X\boxtimes E$ and that $f\mapsto \Lambda(f)$ from $\Lipo(X,E^*)$ into $(X\boxtimes E)'$ is a well-defined linear map. 
Finally, let $f\in\Lipo(X,E^*)$ and assume that $\Lambda(f)=0$. Then $\left\langle u,f\right\rangle=0$ for all $u\in X\boxtimes E$. Since $X\boxtimes E$ separates points of $\Lipo(X,E^*)$, it follows that $f=0$ and this proves that the map $\Lambda$ is one-to-one. 
\end{proof}

We next show that $X\boxtimes E$ is linearly isomorphic to the linear space $\mathcal{F}((X^\#,\tau_p);E)$ of all finite-rank linear operators from $X^\#$ into $E$ which are continuous from the topology of pointwise convergence $\tau_p$ of $X^\#$ to the norm topology of $E$.

\begin{definition}
Let $X$ be a pointed metric space. For $f\in X^\#$, $x\in X$ and $\varepsilon\in\R^+$, we put
$$
B(f,x,\varepsilon)=\left\{g\in X^\#\colon \left|g(x)-f(x)\right|<\varepsilon\right\}.
$$
Let $\mathcal{S}$ be the family of sets $\left\{B(f,x,\varepsilon)\colon f\in X^\#,\, x\in X,\, \varepsilon\in\R^+\right\}$. Then the topology of pointwise convergence $\tau_p$ on $X^\#$ is the topology generated by $\mathcal{S}$. 
\end{definition}

We can check that $(X^\#,\tau_p)$ is a locally convex space. Next we describe its dual space.

\begin{lemma}\label{carac-dual-pointwise}
Let $X$ be a pointed metric space. Then $(X^\#,\tau_p)^*=\lin(\left\{\delta_x\colon x\in X\right\})(\subset (X^\#)')$, where $\delta_x$ is the functional on $X^\#$ defined by $\delta_x(g)=g(x)$.
\end{lemma}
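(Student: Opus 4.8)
The plan is to prove the two inclusions separately, the first being routine and the second resting on a standard linear-algebra fact about weak topologies.

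For the inclusion $\lin(\{\delta_x : x\in X\})\subseteq (X^\#,\tau_p)^*$, I would first check that each evaluation functional $\delta_x$ is $\tau_p$-continuous. Indeed, the topology $\tau_p$ is by construction the initial topology induced on $X^\#$ by the family $\{\delta_x : x\in X\}$: the subbasic sets $B(f,x,\varepsilon)$ are precisely the preimages under $\delta_x$ of the open balls of $\K$. Hence each $\delta_x$ is continuous, and since $(X^\#,\tau_p)^*$ is a linear space, every finite linear combination of the $\delta_x$ lies in it.

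For the reverse inclusion, fix $\varphi\in(X^\#,\tau_p)^*$. Using continuity of $\varphi$ at $0$ (where $\varphi(0)=0$), there is a $\tau_p$-neighborhood $U$ of $0$ with $|\varphi(g)|<1$ for all $g\in U$. By definition of $\tau_p$, $U$ contains a finite intersection $\bigcap_{i=1}^n B(f_i,x_i,\varepsilon_i)$ that still contains $0$. Since $0\in B(f_i,x_i,\varepsilon_i)$ gives $|f_i(x_i)|<\varepsilon_i$, I would set $\eta=\min_i(\varepsilon_i-|f_i(x_i)|)>0$ and observe that the symmetric neighborhood $V=\{g\in X^\# : |g(x_i)|<\eta,\ i=1,\dots,n\}$ is contained in $\bigcap_{i=1}^n B(f_i,x_i,\varepsilon_i)\subseteq U$. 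Thus $|\varphi(g)|<1$ whenever $\max_{1\le i\le n}|g(x_i)|<\eta$.

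The key step is then to show that $\bigcap_{i=1}^n\ker\delta_{x_i}\subseteq\ker\varphi$. If $g$ satisfies $\delta_{x_i}(g)=g(x_i)=0$ for every $i$, then $tg\in V$ for all scalars $t$, so $|t|\,|\varphi(g)|=|\varphi(tg)|<1$ for all $t$, forcing $\varphi(g)=0$. With this kernel containment in hand, I would invoke the standard algebraic lemma that a linear functional vanishing on the common kernel of finitely many linear functionals $\delta_{x_1},\dots,\delta_{x_n}$ must be a linear combination of them. Therefore $\varphi\in\lin(\{\delta_{x_1},\dots,\delta_{x_n}\})\subseteq\lin(\{\delta_x : x\in X\})$, which closes the argument. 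I expect the only genuine subtlety to be the passage from the given subbasic neighborhoods to a balanced neighborhood of the form $\{\max_i|g(x_i)|<\eta\}$, together with the scaling trick $t\to\infty$ that upgrades boundedness of $\varphi$ on $V$ to vanishing on the common kernel; the concluding step is the textbook duality lemma for weak topologies and requires no new input.
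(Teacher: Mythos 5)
Your proposal is correct and follows essentially the same route as the paper: continuity of finite linear combinations of evaluations for one inclusion, and for the other the reduction of a $\tau_p$-continuous functional to a neighborhood of the form $\{\max_i |g(x_i)|<\eta\}$, the scaling trick to get $\bigcap_i\ker\delta_{x_i}\subseteq\ker\varphi$, and the standard algebraic lemma. Your treatment is in fact slightly more careful than the paper's at the point where the subbasic sets $B(f_i,x_i,\varepsilon_i)$ are recentered at $0$, a step the paper passes over with ``we can suppose.''
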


\begin{proof}
Define the linear functional $T\colon X^\#\to\K$ by $T(g)=\sum_{i=1}^n\lambda_ig(x_i)$ for all $g\in X^\#$, where $n\in\N$, $\lambda_1,\ldots\lambda_n\in\K$ and $x_1,\ldots,x_n\in X$. Put $r=1+\sum_{i=1}^n|\lambda_i|$ and let $\varepsilon>0$ be arbitrary. If $g\in\bigcap_{i=1}^nB(0,x_i,\varepsilon/r)$, then $\left|T(g)\right|\leq\sum_{i=1}^n\left|\lambda_i\right|\left|g(x_i)\right|<\varepsilon$. This proves that $T$ is continuous on $X^\#$ when it is equipped with the topology $\tau_p$. Conversely, we need to show that every element $S$ in $(X^\#,\tau_p)^*$ is of that form. Since $S$ is continuous in the $\tau_p$-topology, there is an open neighborhood $V$ of $0$ such that $|S(g)|<1$ for all $g\in V$. We can suppose that $V=\bigcap_{i=1}^nB(0,x_i,\varepsilon)$ for suitable $n\in\N$, $x_1,\ldots,x_n\in X$ and $\varepsilon>0$. Take $f\in\bigcap_{i=1}^n\ker\delta_{x_i}$. Then $mf\in V$ for each $m\in\N$. By the linearity of $S$, it follows that $|S(f)|<1/m$ for all $m\in\N$ and 
 so $S(f)=0$. This shows that $\bigcap_{i=1}^n\ker\delta_{x_i}\subset\ker S$ and the lemma follows from a known fact of linear algebra.
\end{proof}

\begin{theorem}\label{teo-identification}
The map $J\colon X\boxtimes E\to \mathcal{F}((X^\#,\tau_p);E)$, given by 
$$
J(u)(g)=\sum_{i=1}^n\left(g(x_i)-g(y_i)\right)e_i
$$
for $u=\sum_{i=1}^n \delta_{(x_i,y_i)}\boxtimes e_i\in X\boxtimes E$ and $g\in X^\#$, is a linear isomorphism.
\end{theorem}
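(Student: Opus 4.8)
The plan is to verify, in turn, that $J$ is well defined, that each $J(u)$ genuinely lies in $\F((X^\#,\tau_p);E)$, that $J$ is linear and injective, and finally that it is surjective. Only the last step requires real work; the rest reduces to the results already established, chiefly Proposition \ref{pro-0} and Lemma \ref{carac-dual-pointwise}.

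First I would settle well-definedness, the only place where the non-uniqueness of representations intervenes. If $\sum_{i=1}^n \delta_{(x_i,y_i)}\boxtimes e_i$ and $\sum_{j=1}^m \delta_{(x'_j,y'_j)}\boxtimes e'_j$ are two representations of the same $u$, then concatenating the first with the negative of the second (Lemma \ref{lem-conca} together with Lemma \ref{lem-1}) produces a representation of $0$. Applying the implication (i)$\Rightarrow$(iii) of Proposition \ref{pro-0} to this representation, and then using homogeneity to pass from $B_{X^\#}$ to all of $X^\#$, gives $\sum_i(g(x_i)-g(y_i))e_i=\sum_j(g(x'_j)-g(y'_j))e'_j$ for every $g\in X^\#$; hence the two prescriptions for $J(u)$ agree. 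Next I would check that $J(u)$ is a legitimate element of the target space: for fixed $u=\sum_{i=1}^n\delta_{(x_i,y_i)}\boxtimes e_i$, the map $g\mapsto J(u)(g)=\sum_i(g(x_i)-g(y_i))e_i$ is visibly linear, its range lies in $\lin\{e_1,\dots,e_n\}$ so it has finite rank, and it rewrites as $g\mapsto\sum_i(\delta_{x_i}-\delta_{y_i})(g)\,e_i$. Since each $\delta_{x_i}-\delta_{y_i}$ belongs to $(X^\#,\tau_p)^*$ by Lemma \ref{carac-dual-pointwise}, this is a finite sum of $\tau_p$-continuous scalar functionals times fixed vectors, hence $\tau_p$-to-norm continuous.

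Linearity of $u\mapsto J(u)$ then follows routinely from Lemma \ref{lem-conca} and the homogeneity relation $\lambda(\delta_{(x,y)}\boxtimes e)=\delta_{(x,y)}\boxtimes(\lambda e)$. Injectivity is immediate: if $J(u)=0$ then $\sum_i(g(x_i)-g(y_i))e_i=0$ for all $g\in B_{X^\#}$, and the implication (iii)$\Rightarrow$(i) of Proposition \ref{pro-0} yields $u=0$.

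The main work, and the step I expect to be the genuine obstacle, is surjectivity. Given $T\in\F((X^\#,\tau_p);E)$, I would choose a basis $e_1,\dots,e_n$ of its finite-dimensional range and write $T(g)=\sum_{i=1}^n\psi_i(g)e_i$ for uniquely determined linear functionals $\psi_i$ on $X^\#$. Picking functionals in $E^*$ biorthogonal to the $e_i$ on their span and composing them with $T$ exhibits each $\psi_i$ as a composition of the $\tau_p$-continuous operator $T$ with a bounded linear functional, so $\psi_i\in(X^\#,\tau_p)^*$. By Lemma \ref{carac-dual-pointwise} each $\psi_i$ is a finite combination $\sum_j\lambda_{ij}\delta_{x_{ij}}$ of point evaluations. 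Using that every $g\in X^\#$ vanishes at the base point, so that $g(x_{ij})=g(x_{ij})-g(0)$, I can then read off $u=\sum_{i,j}\delta_{(x_{ij},0)}\boxtimes(\lambda_{ij}e_i)\in X\boxtimes E$ and verify $J(u)=T$. The delicate point throughout is the interplay between the $\tau_p$-continuity hypothesis on $T$ and Lemma \ref{carac-dual-pointwise}: it is precisely continuity that forces the coordinate functionals to be honest \emph{finite} sums of evaluations, which is what guarantees that the preimage $u$ is an element of $X\boxtimes E$ rather than some infinitary object.
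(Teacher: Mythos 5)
Your proposal is correct and follows essentially the same route as the paper: well-definedness and injectivity via Proposition \ref{pro-0}, linearity via Lemma \ref{lem-conca}, and surjectivity by expressing the coordinate functionals of a finite-rank $\tau_p$-continuous operator as elements of $(X^\#,\tau_p)^*$ and invoking Lemma \ref{carac-dual-pointwise} together with $g(0)=0$. The only cosmetic difference is that you deduce $\tau_p$-to-norm continuity of $J(u)$ from the continuity of the evaluations $\delta_{x_i}-\delta_{y_i}$, whereas the paper runs a direct neighborhood estimate; both are fine.
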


\begin{proof}
Let $u=\sum_{i=1}^n \delta_{(x_i,y_i)}\boxtimes e_i\in X\boxtimes E$. It is immediate that $J(u)\colon X^\#\to E$ is well defined and linear. 
Note that $J(u)(X^\#)\subset\lin\{e_1,\ldots,e_n\}$ and hence $J(u)$ has finite-dimensional range. In order to prove that $J(u)$ is continuous from $(X^\#,\tau_p)$ to $E$, it is sufficient to see that $J(u)$ is continuous at $0$. Let $\varepsilon>0$. Denote $r=2(1+\sum_{i=1}^n||e_i||)$ and put $z_i=x_i$ for $i=1,\ldots,n$ and $z_i=y_{i-n}$ for $i=n+1,\ldots,2n$. Take $U=\cap_{i=1}^{2n} B(0,z_i,\varepsilon/r)$. For any $g\in U$, we have 
$$
\left\|J(u)(g)\right\|
\leq \sum_{i=1}^n\left(\left|g(x_i)\right|+\left|g(y_i)\right|\right)\left\|e_i\right\|
\leq \sum_{i=1}^n2\frac{\varepsilon}{r}\left\|e_i\right\|
<\varepsilon ,
$$  
as required. Moreover, by Proposition \ref{pro-0}, the map $u\mapsto J(u)$ from $X\boxtimes E$ to $\F((X^\#,\tau_p);E)$ is well defined. 
We now show that $J$ is linear. If $\lambda\in\K$, then $\lambda u=\sum_{i=1}^n \delta_{(x_i,y_i)}\boxtimes(\lambda e_i)$ and so 
$$
J(\lambda u)(g)
=\sum_{i=1}^n\left(g(x_i)-g(y_i)\right)(\lambda e_i)
=\lambda\sum_{i=1}^n\left(g(x_i)-g(y_i)\right) e_i
=\lambda J(u)(g)
$$
for all $g\in X^\#$. Now, let $v=\sum_{i=1}^m \delta_{(x'_i,y'_i)}\boxtimes e'_i\in X\boxtimes E$ and take the representation $\sum_{i=1}^{n+m} \delta_{(x''_i,y''_i)}\boxtimes e''_i$ of $u+v$ given in Lemma \ref{lem-conca}. Then we have  
\begin{align*}
J(u+v)(g)&=\sum_{i=1}^{n+m}\left(g(x''_i)-g(y''_i)\right)e''_i\\
&=\sum_{i=1}^{n}\left(g(x''_i)-g(y''_i)\right)e''_i+\sum_{i=n+1}^{n+m}\left(g(x''_i)-g(y''_i)\right)e''_i\\
&=\sum_{i=1}^{n}\left(g(x_i)-g(y_i)\right)e_i+\sum_{i=n+1}^{n+m}\left(g(x'_{i-n})-g(y'_{i-n})\right)e'_{i-n}\\
&=\sum_{i=1}^{n}\left(g(x_i)-g(y_i)\right)e_i+\sum_{i=1}^{m}\left(g(x'_i)-g(y'_i)\right)e'_i\\
&=J(u)(g)+J(v)(g)
\end{align*}
for all $g\in X^\#$.
It remains to show that $J$ is bijective. On one hand, assume that $J(u)=0$.
Then $J(u)(g)=\sum_{i=1}^n(g(x_i)-g(y_i))e_i=0$ for all $g\in X^\#$, this implies that $u=0$ by Proposition \ref{pro-0} and so $J$ is one-to-one.
On the other hand, if $T\in\mathcal{F}((X^\#,\tau_p);E)$, take a basis $\{e_1,\ldots,e_n\}$ of $T(X^\#)$. For each $g\in X^\#$, there are unique $\lambda^{(g)}_1,\ldots,\lambda^{(g)}_n\in\K$ such that $T(g)=\sum_{i=1}^n \lambda^{(g)}_ie_i$. For each $i\in\{1,\ldots,n\}$, let $y^i\colon T(X^\#)\to\K$ be given by $y^i(T(g))=\lambda^{(g)}_i$ for all $g\in X^\#$. The uniqueness of the representation of each element of $T(X^\#)$ implies that each $y^i$ is linear. Hence $y^i$ is continuous on $T(X^\#)$ since the linear space $T(X^\#)$ is finite-dimensional. Then each $T_i=y^i\circ T$ belongs to $(X^\#,\tau_p)^*$ and $T(g)=\sum_{i=1}^n T_i(g)e_i$ for all $g\in X^\#$. Since $(X^\#,\tau_p)^*=\lin(\left\{\delta_x\colon x\in X\right\})(\subset (X^\#)')$ by Lemma \ref{carac-dual-pointwise}, 
for each $i\in\{1,\ldots,n\}$ there are $m(i)\in\N$, $\lambda_1^{(i)},\ldots,\lambda_{m(i)}^{(i)}\in\mathbb{K}$ and $x_1^{(i)},\ldots,x_{m(i)}^{(i)}\in X$ such that $T_i=\sum_{j=1}^{m(i)}\lambda_j^{(i)}\delta_{x_j^{(i)}}$. Then, for each $g\in X^\#$, we may write    
$$
T(g)
=\sum_{i=1}^n T_i(g)e_i
=\sum_{j=1}^m g(x_j)u_j
=J\left(\sum_{j=1}^m\delta_{(x_j,0)}\boxtimes u_j\right)(g)
$$ 
for certain $m\in\N$, $x_1,\ldots,x_m\in X$ and $u_1,\ldots,u_m\in E$. This proves that $J$ is onto.
\end{proof}

\begin{corollary}\label{coro-teo-identification}
Let $X$ be a pointed metric space. If $E$ is a finite-dimensional Banach space, then $X\boxtimes E$ is linearly isomorphic to $\L((X^\#,\tau_p);E)$. In particular, $X\boxtimes \K$ is linearly isomorphic to $(X^\#,\tau_p)^*=\lin(\left\{\delta_x\colon x\in X\right\})$.
\end{corollary}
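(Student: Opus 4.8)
The plan is to read off this corollary directly from Theorem \ref{teo-identification}, which already identifies $X\boxtimes E$ with $\F((X^\#,\tau_p);E)$ for an arbitrary Banach space $E$. The only additional ingredient needed is the elementary observation that finite-dimensionality of $E$ collapses the distinction between finite-rank continuous operators and arbitrary continuous operators.

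First I would invoke Theorem \ref{teo-identification} to obtain the linear isomorphism $J\colon X\boxtimes E\to\F((X^\#,\tau_p);E)$. Next I would argue that, when $\dim E<\infty$, every $T\in\L((X^\#,\tau_p);E)$ automatically lies in $\F((X^\#,\tau_p);E)$: its range $T(X^\#)$ is a linear subspace of the finite-dimensional space $E$, hence itself finite-dimensional, so $T$ has finite rank. Therefore $\F((X^\#,\tau_p);E)=\L((X^\#,\tau_p);E)$, and combining this equality with $J$ yields the desired linear isomorphism $X\boxtimes E\cong\L((X^\#,\tau_p);E)$.

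For the final assertion I would specialize to $E=\K$, which is one-dimensional and hence finite-dimensional, so the isomorphism just established gives $X\boxtimes\K\cong\L((X^\#,\tau_p);\K)=(X^\#,\tau_p)^*$. The remaining identification $(X^\#,\tau_p)^*=\lin(\{\delta_x\colon x\in X\})$ is then exactly the content of Lemma \ref{carac-dual-pointwise}. Since the whole argument reduces to Theorem \ref{teo-identification} together with a one-line dimension count, I do not anticipate any genuine obstacle; the single point requiring a moment's care is the verification that $\F((X^\#,\tau_p);E)$ and $\L((X^\#,\tau_p);E)$ coincide in the finite-dimensional setting, which is immediate once one observes that the range of any such operator is contained in $E$.
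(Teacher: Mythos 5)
Your proposal is correct and is precisely the argument the paper intends (the corollary is stated without proof as an immediate consequence of Theorem \ref{teo-identification}): finite-dimensionality of $E$ forces every $T\in\L((X^\#,\tau_p);E)$ to have finite rank, so $\F((X^\#,\tau_p);E)=\L((X^\#,\tau_p);E)$, and the case $E=\K$ follows from Lemma \ref{carac-dual-pointwise}. No gaps.
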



\section{Lipschitz tensor product functionals and operators}\label{2}

We first introduce the concept of Lipschitz tensor product functional of a Lipschitz functional and a bounded linear functional.

\begin{definition}\label{def-funct}
Let $X$ be a pointed metric space and $E$ a Banach space. Let $g\in X^\#$ and $\phi\in E^*$. The map $g\boxtimes\phi\colon X\boxtimes E\to\K$, given by 
$$
(g\boxtimes\phi)(u)
=\sum_{i=1}^n\left(g(x_i)-g(y_i)\right)\left\langle \phi,e_i\right\rangle
$$
for $u=\sum_{i=1}^n \delta_{(x_i,y_i)}\boxtimes e_i\in X\boxtimes E$, is called the Lipschitz tensor product functional of $g$ and $\phi$.
\end{definition}

By Lemma \ref{lem-2}, note that  
$$
(g\boxtimes\phi)(u)
=\sum_{i=1}^n\left\langle(g\cdot\phi)(x_i)-(g\cdot\phi)(y_i),e_i\right\rangle
=u(g\cdot\phi).
$$
The following result which follows easily from this formula gathers some properties of these functionals.

\begin{lemma}\label{lem-4}
Let $g\in X^\#$ and $\phi\in E^*$. The functional $g\boxtimes\phi\colon X\boxtimes E\to\K$ is a well-defined linear map satisfying $\lambda(g\boxtimes\phi)=(\lambda g)\boxtimes\phi=g\boxtimes(\lambda\phi)$ for any $\lambda\in\K$. Moreover, $(g_1+g_2)\boxtimes\phi=g_1\boxtimes\phi+g_2\boxtimes\phi$ for all $g_1,g_2\in X^\#$ and $g\boxtimes(\phi_1+\phi_2)=g\boxtimes\phi_1+g\boxtimes\phi_2$ for all $\phi_1,\phi_2\in E^*$. 
\end{lemma}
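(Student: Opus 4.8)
The plan is to let the identity $(g\boxtimes\phi)(u)=u(g\cdot\phi)$, recorded just before the statement, carry all the weight. Here $g\cdot\phi$ is the fixed element of $\Lipo(X,E^*)$ furnished by Lemma \ref{lem-3}. The crucial observation is that, by Definition \ref{def-Lipschitz-tensor-product}, $u$ is an element of $\Lipo(X,E^*)'$, i.e.\ an honest linear functional on $\Lipo(X,E^*)$ whose value does not depend on the particular representation $\sum_{i=1}^n\delta_{(x_i,y_i)}\boxtimes e_i$ chosen for it. Consequently $u(g\cdot\phi)$ is determined by $u$ alone, and this gives well-definedness of $g\boxtimes\phi$ for free: distinct representations of the same $u$ feed the same $f=g\cdot\phi$ into the same functional $u$ and hence return the same scalar. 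Linearity of $g\boxtimes\phi$ on $X\boxtimes E$ follows by the same device: since addition and scalar multiplication in $X\boxtimes E\subset\Lipo(X,E^*)'$ are the pointwise operations on functionals, $(u+\lambda v)(g\cdot\phi)=u(g\cdot\phi)+\lambda\,v(g\cdot\phi)$ for all $u,v\in X\boxtimes E$ and $\lambda\in\K$, which is exactly $(g\boxtimes\phi)(u+\lambda v)=(g\boxtimes\phi)(u)+\lambda(g\boxtimes\phi)(v)$.

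For the remaining algebraic identities I would argue directly from the defining sum in Definition \ref{def-funct}, evaluated at an arbitrary $u=\sum_{i=1}^n\delta_{(x_i,y_i)}\boxtimes e_i$, using only the linearity of evaluation $g\mapsto g(x)$ on $X^\#$ and the bilinearity of $\langle\phi,e\rangle$ in $E^*\times E$. The scalar identities come from $(\lambda g)(x_i)-(\lambda g)(y_i)=\lambda(g(x_i)-g(y_i))$ and $\langle\lambda\phi,e_i\rangle=\lambda\langle\phi,e_i\rangle$, which both reduce $((\lambda g)\boxtimes\phi)(u)$ and $(g\boxtimes(\lambda\phi))(u)$ to $\lambda(g\boxtimes\phi)(u)$. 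Additivity in the first slot is obtained by splitting $(g_1+g_2)(x_i)-(g_1+g_2)(y_i)$ into its two summands, and additivity in the second slot by splitting $\langle\phi_1+\phi_2,e_i\rangle=\langle\phi_1,e_i\rangle+\langle\phi_2,e_i\rangle$. In each case the claim is an equality of functionals on $X\boxtimes E$, so it suffices to check it at a fixed representative $u$, and the independence of the outcome from the representation is already guaranteed by the well-definedness proved above.

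I do not expect a genuine obstacle: once the formula $(g\boxtimes\phi)(u)=u(g\cdot\phi)$ is available, well-definedness and linearity in $u$ are automatic, and the four bilinearity relations are term-by-term bookkeeping. The only point deserving a moment's care is to keep apart two distinct notions of ``linearity'' in play, namely that each $g\boxtimes\phi$ is a well-defined linear functional on $X\boxtimes E$ (which rests on $u$ being a representation-independent functional) and that the assignments $g\mapsto g\boxtimes\phi$ and $\phi\mapsto g\boxtimes\phi$ respect scalars and sums (which are the elementary computations above). Keeping this distinction explicit is all that is needed for a clean write-up.
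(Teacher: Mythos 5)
Your proposal is correct and follows essentially the same route as the paper: the paper records the identity $(g\boxtimes\phi)(u)=u(g\cdot\phi)$ immediately before the lemma and states that the lemma ``follows easily from this formula,'' which is exactly the well-definedness and linearity-in-$u$ argument you give, with the four bilinearity relations reduced to elementary term-by-term computations. Your explicit remark that the identities, being equalities of functionals already known to be well defined, may be verified on a single representative is a correct and slightly more careful articulation of what the paper leaves implicit.
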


\begin{definition}
Let $X$ be a pointed metric space and $E$ a Banach space. The space $ X^\#\boxast E^*$ is defined as the linear subspace of $(X\boxtimes E)'$ spanned by the set $\left\{g\boxtimes\phi\colon g\in X^\#,\ \phi\in E^*\right\}$. This space is called the associated Lipschitz tensor product of $X\boxtimes E$.
\end{definition}

From the aforementioned formula we also derive easily the following fact.

\begin{lemma}\label{lem-4.1}
For any $\sum_{j=1}^m g_j\boxtimes\phi_j\in X^\#\boxast E^*$ and $\sum_{i=1}^n \delta_{(x_i,y_i)}\boxtimes e_i\in X\boxtimes E$, we have 
$$
\left(\sum_{j=1}^m g_j\boxtimes\phi_j\right)\left(\sum_{i=1}^n \delta_{(x_i,y_i)}\boxtimes e_i\right)
=\left(\sum_{i=1}^n \delta_{(x_i,y_i)}\boxtimes e_i\right)\left(\sum_{j=1}^m g_j\cdot\phi_j\right).
$$
\end{lemma}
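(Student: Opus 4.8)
The plan is to deduce the identity from the single-term formula $(g\boxtimes\phi)(u)=u(g\cdot\phi)$ displayed just above (immediately after Definition \ref{def-funct}), pushing the two finite sums through by linearity. I would keep in mind the two roles played by the objects involved: by Definition \ref{def-Lipschitz-tensor-product}, $u=\sum_{i=1}^n \delta_{(x_i,y_i)}\boxtimes e_i$ is a \emph{linear functional on} $\Lipo(X,E^*)$, whereas $\sum_{j=1}^m g_j\boxtimes\phi_j$ is a \emph{linear functional on} $X\boxtimes E$. The identity simply says that the two natural ways of pairing these objects agree.

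First I would expand the left-hand side using additivity of evaluation in the algebraic dual $(X\boxtimes E)'$, obtaining
$$
\left(\sum_{j=1}^m g_j\boxtimes\phi_j\right)(u)=\sum_{j=1}^m (g_j\boxtimes\phi_j)(u).
$$
Applying the formula $(g_j\boxtimes\phi_j)(u)=u(g_j\cdot\phi_j)$ to each summand rewrites this as $\sum_{j=1}^m u(g_j\cdot\phi_j)$. Next, by Lemma \ref{lem-3} each $g_j\cdot\phi_j$ belongs to $\Lipo(X,E^*)$, so the sum $\sum_{j=1}^m g_j\cdot\phi_j$ is again an element of the vector space $\Lipo(X,E^*)$ on which $u$ acts. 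Since $u$ is linear,
$$
\sum_{j=1}^m u(g_j\cdot\phi_j)=u\left(\sum_{j=1}^m g_j\cdot\phi_j\right)=\left(\sum_{i=1}^n \delta_{(x_i,y_i)}\boxtimes e_i\right)\left(\sum_{j=1}^m g_j\cdot\phi_j\right),
$$
which is exactly the right-hand side, completing the argument.

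The reasoning is purely formal, and there is no genuine obstacle; the only point requiring a word of care is that $\sum_{j=1}^m g_j\cdot\phi_j$ must be a legitimate element of $\Lipo(X,E^*)$ before $u$ can be evaluated on it, and this is precisely what Lemma \ref{lem-3} (combined with $\Lipo(X,E^*)$ being a linear space) secures. In particular, no question of representation-independence arises, because the identity is formulated and verified at the level of the functionals $g_j\boxtimes\phi_j$ and $u$ themselves, not in terms of any chosen representation of $u$.
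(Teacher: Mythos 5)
Your proof is correct and matches the paper's intended argument: the paper states that Lemma \ref{lem-4.1} ``follows easily'' from the displayed formula $(g\boxtimes\phi)(u)=u(g\cdot\phi)$, and your expansion by linearity on both sides (using Lemma \ref{lem-3} to justify that each $g_j\cdot\phi_j$ lies in $\Lipo(X,E^*)$) is exactly the omitted routine verification.
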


Each element $u^*$ in $ X^\#\boxast E^*$ has the form $u^*=\sum_{j=1}^m \lambda_j(g_j\boxtimes\phi_j)$, where $m\in\N$, $\lambda_j\in\K$, $g_j\in X^\#$ and $\phi_j\in E^*$, but this representation is not unique. Since $\lambda(g\boxtimes\phi)=(\lambda g)\boxtimes\phi=g\boxtimes(\lambda\phi)$, each element of $X^\#\boxast E^*$ can be expressed as $\sum_{j=1}^m g_j\boxtimes\phi_j$. This representation can be improved as follows.

\begin{lemma}\label{lem-indep}
Every nonzero element $u^*$ in $X^\#\boxast E^*$ has a representation $\sum_{j=1}^m g_j\boxtimes\phi_j$ such that the functions $g_1,\ldots,g_m$ in $X^\#$ are nonzero and the functionals $\phi_1,\ldots,\phi_m$ in $E^*$ are linearly independent.
\end{lemma}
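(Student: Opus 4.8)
The plan is to start from an arbitrary representation $u^*=\sum_{j=1}^k g_j\boxtimes\phi_j$ of the given nonzero element (which exists by the remarks preceding the statement) and to perform two successive reductions. First I would eliminate any zero functions: if some $g_j=0$, then $g_j\boxtimes\phi_j=0$ by Lemma \ref{lem-4} (since $0\boxtimes\phi=(0\cdot g)\boxtimes\phi=0\cdot(g\boxtimes\phi)=0$), so that term may simply be deleted. After this step every $g_j$ appearing is nonzero. The substantive work is to arrange that the functionals $\phi_1,\dots,\phi_m$ are linearly independent.

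To achieve linear independence of the $\phi_j$, I would argue by a minimality/induction scheme. Among all representations $\sum_{j=1}^m g_j\boxtimes\phi_j$ of $u^*$ with every $g_j\neq 0$, choose one with $m$ as small as possible (the well-ordering principle of $\N$ guarantees this, exactly as in the proof of Lemma \ref{lem-0}). I claim the $\phi_j$ in this minimal representation are linearly independent. Suppose not; then some $\phi_j$, say $\phi_m$, is a linear combination of the others, $\phi_m=\sum_{j=1}^{m-1}c_j\phi_j$. Using the bilinearity properties in Lemma \ref{lem-4} — namely additivity in the second slot and $g\boxtimes(\lambda\phi)=(\lambda g)\boxtimes\phi$ — I would substitute this expression and rewrite
\begin{align*}
u^*&=\sum_{j=1}^{m-1} g_j\boxtimes\phi_j+g_m\boxtimes\Bigl(\sum_{j=1}^{m-1}c_j\phi_j\Bigr)\\
&=\sum_{j=1}^{m-1}\bigl(g_j\boxtimes\phi_j+(c_jg_m)\boxtimes\phi_j\bigr)\\
&=\sum_{j=1}^{m-1}(g_j+c_jg_m)\boxtimes\phi_j,
\end{align*}
which is a representation of $u^*$ using only $m-1$ elementary functionals. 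This contradicts the minimality of $m$ provided each $g_j+c_jg_m$ is nonzero; any vanishing term among these can again be discarded, yielding a representation with strictly fewer than $m$ terms in all cases, contradicting minimality. Hence the $\phi_j$ must already be linearly independent.

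The main obstacle is a bookkeeping subtlety rather than a deep difficulty: after the substitution one must be careful that discarding zero terms does not inadvertently remove functions one needs, and that the minimality is stated over the correct class of representations (those with all $g_j\neq 0$) so that the contradiction is genuine. I would therefore phrase the minimality over representations in which every $g_j$ is nonzero, so that the reduced representation $\sum_{j=1}^{m-1}(g_j+c_jg_m)\boxtimes\phi_j$, after deleting any null coefficients, still lies in the admissible class and still has fewer than $m$ terms. Once linear independence of the $\phi_j$ is secured, the $g_j$ are automatically nonzero by construction, so both required conclusions hold simultaneously and the proof is complete.
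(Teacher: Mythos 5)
Your proof is correct and follows essentially the same route as the paper's: both first discard terms with $g_j=0$ (using $0\boxtimes\phi=0$) and then use the bilinearity from Lemma \ref{lem-4} to absorb a linearly dependent $\phi_j$ into the remaining terms, regrouping as $\sum_j(g_j+c_jg_m)\boxtimes\phi_j$. The only difference is organizational: the paper performs the reduction in a single pass by extracting a basis of $\lin(\{\phi_1,\ldots,\phi_n\})$ and rewriting all dependent functionals at once, whereas you package the identical computation as a minimal-length argument (noting only that if every coefficient $g_j+c_jg_m$ vanished you would get $u^*=0$, contradicting the hypothesis rather than minimality).
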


\begin{proof}
Let $u^*\in X^\#\boxast E^*$, $u^*\neq 0$. Since $0\boxtimes\phi=0$, we can take a representation for $u^*$, $\sum_{i=1}^n h_i\boxtimes\phi_i$, where $h_1,\ldots,h_n$ are nonzero. If the vectors $\phi_1,\ldots,\phi_n$ are linearly independent, we have finished. Otherwise, take $F=\lin(\{\phi_1,\ldots,\phi_n\})$ and choose a subset of $\{\phi_1,\ldots,\phi_n\}$, which is a basis for $F$, $\phi_1,\ldots,\phi_p$ (after reordering) for some $p<n$. For each $i\in\{p+1,\ldots,n\}$ we can express the vector $\phi_i$ as a unique linear combination in the form $\phi_i=\sum_{k=1}^p\lambda_k^{(i)}\phi_k$, where $\lambda_1^{(i)},\ldots,\lambda_p^{(i)}\in\K$. Using Lemma \ref{lem-4}, we can write 
\begin{align*}
u^*&=\sum_{i=1}^p h_i\boxtimes\phi_i+\sum_{i=p+1}^n h_i\boxtimes\phi_i\\
&=\sum_{i=1}^p h_i\boxtimes\phi_i+\sum_{i=p+1}^n h_i\boxtimes \left(\sum_{k=1}^p\lambda_k^{(i)}\phi_k\right)\\
&=\sum_{i=1}^p h_i\boxtimes\phi_i+\sum_{i=p+1}^n \left(\sum_{k=1}^p\lambda_k^{(i)}\left(h_i\boxtimes \phi_k\right)\right)\\
&=\sum_{i=1}^p h_i\boxtimes\phi_i+ \sum_{k=1}^p\left(\sum_{i=p+1}^n\lambda_k^{(i)}\left(h_i\boxtimes \phi_k\right)\right)\\
&=\sum_{i=1}^p h_i\boxtimes\phi_i+\sum_{k=1}^p\left(\sum_{i=p+1}^n\lambda_k^{(i)}h_i\right)\boxtimes \phi_k\\
&=\sum_{j=1}^p \left(h_j+\sum_{i=p+1}^n\lambda_j^{(i)}h_i\right)\boxtimes \phi_j.
\end{align*}
Denote $g_j=h_j+\sum_{i=p+1}^n\lambda_j^{(i)}h_i$ for each $j\in\{1,\ldots,p\}$. Since $u^*\neq 0$, after reordering, we can take $m\leq p$ for which $g_j\neq 0$ for all $j\leq m$ and $g_j=0$ for all $j>m+1$. Then $\sum_{j=1}^m g_j\boxtimes\phi_j$ is a representation of $u^*$ satisfying the required conditions.
\end{proof}

Our next aim is to show that the associated Lipschitz tensor product $X^\#\boxast E^*$ is linearly isomorphic to the space of Lipschitz finite-rank operators from $X$ to $E^*$. This class of Lipschitz operators appears in \cite{j70, jsv}.

Let us recall that if $X$ is a set and $E$ is a vector space, then a map $f\colon X\to E$ is said to have finite-dimensional rank if the subspace of $E$ generated by $f(X)$, $\lin(f(X))$, is finite-dimensional in whose case the rank of $f$, denoted by $\rank(f)$, is defined as the dimension of $\lin(f(X))$. 

For a pointed metric space $X$ and a Banach space $E$, we denote by $\LipoF(X,E^*)$ the set of all Lipschitz finite-rank operators from $X$ to $E^*$. Clearly, $\LipoF(X,E^*)$ is a linear subspace of $\Lipo(X,E^*)$. For any $g\in X^\#$ and $\phi\in E^*$, we consider in Lemma \ref{lem-3} the elements $g\cdot\phi$ of $\LipoF(X,E^*)$ defined by $(g\cdot\phi)(x)=g(x)\phi$ for all $x\in X$. Note that $\rank(g\cdot\phi)=1$ if $g\neq 0$ and $\phi\neq 0$. Now we prove that these elements generate linearly the space $\LipoF(X,E^*)$.

\begin{lemma}\label{lem-finite}
Every element $f\in\LipoF(X,E^*)$ has a representation in the form $f=\sum_{j=1}^m g_j\cdot \phi_j$, where $m=\rank(f)$, $g_1,\ldots,g_m\in X^\#$ and $\phi_1,\ldots,\phi_m\in E^*$.
\end{lemma}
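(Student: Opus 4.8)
The plan is to reproduce, in this more concrete setting, the construction used in the surjectivity part of the proof of Theorem \ref{teo-identification}: exploit a basis of the finite-dimensional range of $f$ together with its dual coordinate functionals. First I would set $F=\lin(f(X))\subseteq E^*$. By hypothesis $F$ is finite-dimensional with $\dim F=\rank(f)=m$, so I fix a basis $\phi_1,\ldots,\phi_m$ of $F$. Since $f(x)\in F$ for every $x\in X$, there are unique scalars expressing $f(x)$ in this basis; calling them $g_1(x),\ldots,g_m(x)$, I obtain functions $g_j\colon X\to\K$ with $f(x)=\sum_{j=1}^m g_j(x)\phi_j$, that is, $f=\sum_{j=1}^m g_j\cdot\phi_j$ in the notation of Lemma \ref{lem-3}. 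The number of terms is exactly $m=\rank(f)$, as required by the statement.

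The remaining work is to verify that each $g_j$ belongs to $X^\#$. For this I would introduce the coordinate functionals $y^1,\ldots,y^m\colon F\to\K$ dual to the chosen basis, so that $g_j=y^j\circ f$; this exhibits $g_j$ as a genuine (single-valued) function and makes the required properties transparent. Since $F$ is finite-dimensional, each linear functional $y^j$ is automatically bounded, say $|y^j(\psi)|\leq C_j\|\psi\|$ for all $\psi\in F$. Hence, for $x,y\in X$, the estimate $|g_j(x)-g_j(y)|=|y^j(f(x)-f(y))|\leq C_j\|f(x)-f(y)\|\leq C_j\Lip(f)\,d(x,y)$ shows that $g_j$ is Lipschitz with $\Lip(g_j)\leq C_j\Lip(f)$. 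Moreover $f(0)=0$ forces $g_j(0)=y^j(f(0))=0$, so $g_j$ vanishes at the base point and therefore $g_j\in X^\#$.

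There is no serious obstacle here: the only point demanding care is the Lipschitz estimate for the coordinate functions $g_j$, and this rests entirely on the standard fact that a linear functional on a finite-dimensional normed space is bounded (equivalently, that all norms on $F$ are equivalent). Once the boundedness of the $y^j$ is in hand, the displayed composition $g_j=y^j\circ f$ delivers both the Lipschitz bound and the vanishing at $0$ immediately, and assembling $f=\sum_{j=1}^m g_j\cdot\phi_j$ with $g_j\in X^\#$ and $\phi_j\in E^*$ completes the argument.
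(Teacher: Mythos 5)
Your proposal is correct and follows essentially the same route as the paper: choose a basis $\phi_1,\ldots,\phi_m$ of $\lin(f(X))$, let $y^j$ be the coordinate functionals, and set $g_j=y^j\circ f$ so that $f=\sum_{j=1}^m g_j\cdot\phi_j$. The only difference is that you spell out the verification that $g_j\in X^\#$ (boundedness of $y^j$ on the finite-dimensional range plus $f(0)=0$), which the paper dismisses with ``Clearly, $g_j\in X^\#$.''
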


\begin{proof}
Suppose that $\lin(f(X))$ is $m$-dimensional and let $\{\phi_1,\ldots,\phi_m\}$ be a basis of $\lin(f(X))$. Then, for each $x\in X$, the element $f(x)\in f(X)$ is expressible in a unique form as $f(x)=\sum_{j=1}^m \lambda^{(x)}_j\phi_j$ with $\lambda^{(x)}_1,\ldots,\lambda^{(x)}_m\in\K$. For each $j\in\{1,\ldots,m\}$, define the linear map $y^j\colon\lin(f(X))\to\K$ by $y^j(f(x))=\lambda^{(x)}_j$ for all $x\in X$. Let $g_j=y ^j\circ f$. Clearly, $g_j\in X^\#$ and, given $x\in X$, we have $f(x)=\sum_{j=1}^m \lambda^{(x)}_j\phi_j=\sum_{j=1}^m g_j(x)\phi_j$. Hence $f=\sum_{j=1}^m g_j\cdot\phi_j$.
\end{proof}

\begin{theorem}\label{theo-identification2}
The map $K\colon X^\#\boxast E^*\to\LipoF(X,E^*)$, defined by 
$$
K\left(\sum_{j=1}^m g_j\boxtimes\phi_j\right)=\sum_{j=1}^m g_j\cdot\phi_j,
$$
is a linear isomorphism.
\end{theorem}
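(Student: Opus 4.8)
The plan is to show that $K$ is a well-defined linear bijection. The map $K$ sends an element $\sum_{j=1}^m g_j\boxtimes\phi_j$ of $X^\#\boxast E^*$ to the Lipschitz finite-rank operator $\sum_{j=1}^m g_j\cdot\phi_j$ in $\LipoF(X,E^*)$. The bulk of the work reduces to verifying well-definedness and injectivity; surjectivity and linearity should be comparatively routine given the lemmas already established.

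First I would establish that $K$ is well defined and linear simultaneously. The subtlety is that an element of $X^\#\boxast E^*$ has many representations of the form $\sum_{j=1}^m g_j\boxtimes\phi_j$, so I must check that if $\sum_{j=1}^m g_j\boxtimes\phi_j = \sum_{k=1}^{m'} g'_k\boxtimes\phi'_k$ as functionals on $X\boxtimes E$, then the corresponding operators $\sum_{j=1}^m g_j\cdot\phi_j$ and $\sum_{k=1}^{m'} g'_k\cdot\phi'_k$ coincide as maps from $X$ to $E^*$. The clean way to do this is through a duality argument: two elements of $\LipoF(X,E^*)$ (indeed of $\Lipo(X,E^*)$) agree if and only if they are identified by the dual pairing with $X\boxtimes E$, since by Theorem \ref{theo-dual-pair} the space $X\boxtimes E$ separates points of $\Lipo(X,E^*)$. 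Concretely, for any elementary tensor $\delta_{(x,y)}\boxtimes e$, Lemma \ref{lem-4.1} gives
$$
\left(\sum_{j=1}^m g_j\boxtimes\phi_j\right)(\delta_{(x,y)}\boxtimes e) = (\delta_{(x,y)}\boxtimes e)\left(\sum_{j=1}^m g_j\cdot\phi_j\right) = \Big\langle \Big(\sum_{j=1}^m g_j\cdot\phi_j\Big)(x) - \Big(\sum_{j=1}^m g_j\cdot\phi_j\Big)(y),\, e\Big\rangle.
$$
Thus if the two sums define the same functional on $X\boxtimes E$, then the two operators are not separated by any element of $X\boxtimes E$, and the separation property forces them to be equal. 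This single observation simultaneously yields well-definedness (same functional $\Rightarrow$ same operator) and injectivity (reading the implication in the other direction: if $K(u^*)=0$ as an operator, then $u^*$ annihilates every elementary tensor, hence $u^*=0$). Linearity is then immediate from the definition of $K$ together with Lemma \ref{lem-4}.

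For surjectivity I would invoke Lemma \ref{lem-finite}: any $f\in\LipoF(X,E^*)$ can be written as $f=\sum_{j=1}^m g_j\cdot\phi_j$ with $g_j\in X^\#$ and $\phi_j\in E^*$, so $f = K\left(\sum_{j=1}^m g_j\boxtimes\phi_j\right)$ lies in the range of $K$. The main obstacle, and the step deserving the most care, is the well-definedness argument, because everything hinges on the fact that equality of two elements of $X^\#\boxast E^*$ as functionals on $X\boxtimes E$ genuinely transfers to equality of the associated operators; I expect the separation property from Theorem \ref{theo-dual-pair}, combined with the pairing identity of Lemma \ref{lem-4.1}, to be exactly the tool that closes this gap cleanly, so that no representation-dependent computation is ever needed.
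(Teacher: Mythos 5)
Your proposal is correct and follows essentially the same route as the paper: well-definedness and injectivity via the pairing identity of Lemma \ref{lem-4.1} combined with the separation property from Theorem \ref{theo-dual-pair}, linearity from the definition, and surjectivity from Lemma \ref{lem-finite}. The paper's proof is just a terser statement of exactly these steps.
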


\begin{proof}
The map $K$ is well defined by applying Lemma \ref{lem-4.1} and Theorem \ref{theo-dual-pair}. Clearly, $K$ is linear. Moreover, it is surjective by Lemma \ref{lem-finite} and injective by Lemma \ref{lem-4.1}.
\end{proof}

We next introduce the concept of Lipschitz tensor product operator of a Lipschitz operator and a bounded linear operator.

\begin{definition}\label{def-Lipschitz tensor product operator}
Let $X,Y$ be pointed metric spaces and $E,F$ Banach spaces. Let $h\in\Lipo(X,Y)$ and $T\in\L(E;F)$. The map $h\boxtimes T\colon X\boxtimes E\to Y\boxtimes F$, given by  
$$
(h\boxtimes T)(u)=\sum_{i=1}^n\delta_{(h(x_i),h(y_i))}\boxtimes T(e_i)
$$
for $u=\sum_{i=1}^n \delta_{(x_i,y_i)}\boxtimes e_i\in X\boxtimes E$, is called the Lipschitz tensor product operator of $h$ and $T$. 
\end{definition}

\begin{lemma}\label{lem-7}
Let $h\in\Lipo(X,Y)$ and $T\in\L(E;F)$. Then $h\boxtimes T\colon X\boxtimes E\to Y\boxtimes F$ is a well-defined linear operator.
\end{lemma}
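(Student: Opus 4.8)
The plan is to establish well-definedness first, since this is the only substantive point (linearity will then follow routinely). Because a given $u\in X\boxtimes E$ may admit several representations, to see that $(h\boxtimes T)(u)$ is independent of the representation chosen it suffices, by the behavior of the defining formula under concatenation and scalar multiplication (Lemmas \ref{lem-1} and \ref{lem-conca}), to verify the implication: if $u=\sum_{i=1}^n \delta_{(x_i,y_i)}\boxtimes e_i=0$ in $X\boxtimes E$, then $\sum_{i=1}^n \delta_{(h(x_i),h(y_i))}\boxtimes T(e_i)=0$ in $Y\boxtimes F$. I would reduce the well-definedness to this single statement and then prove it via the vanishing criterion of Proposition \ref{pro-0}.

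The crucial elementary observation is that pullback along $h$ maps test functions for $Y$ into test functions for $X$. Indeed, for every $g\in Y^\#$ the composition $g\circ h$ lies in $X^\#$: it is Lipschitz with $\Lip(g\circ h)\leq\Lip(g)\Lip(h)$, and it vanishes at the base point since $h$ preserves base points and $g\in Y^\#$. Consequently, when $\Lip(h)>0$ and $g\in B_{Y^\#}$, the normalized function $(g\circ h)/\Lip(h)$ belongs to $B_{X^\#}$. I would dispose of the degenerate case $\Lip(h)=0$ at the outset: then $h$ is constant and base-point preserving, hence $h\equiv 0$, so every $\delta_{(h(x_i),h(y_i))}\boxtimes T(e_i)=\delta_{(0,0)}\boxtimes T(e_i)=0$ by Lemma \ref{lem-1}, and $h\boxtimes T$ is just the zero map, trivially well-defined.

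Assuming $\Lip(h)>0$, I would invoke the equivalence (i)$\Leftrightarrow$(iii) of Proposition \ref{pro-0}: from $u=0$ we obtain $\sum_{i=1}^n\left(g'(x_i)-g'(y_i)\right)e_i=0$ for every $g'\in B_{X^\#}$. Applying the continuous linear operator $T$ gives $\sum_{i=1}^n\left(g'(x_i)-g'(y_i)\right)T(e_i)=0$ for every $g'\in B_{X^\#}$. Specializing $g'=(g\circ h)/\Lip(h)$ and clearing the factor $\Lip(h)$ yields $\sum_{i=1}^n\left(g(h(x_i))-g(h(y_i))\right)T(e_i)=0$ for every $g\in B_{Y^\#}$. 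By the implication (iii)$\Rightarrow$(i) of Proposition \ref{pro-0}, applied this time in $Y\boxtimes F$, this says precisely that $\sum_{i=1}^n\delta_{(h(x_i),h(y_i))}\boxtimes T(e_i)=0$, which is exactly what we needed.

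Finally, linearity is routine and mirrors the corresponding argument in Theorem \ref{teo-identification}: homogeneity follows from $\lambda u=\sum_i\delta_{(x_i,y_i)}\boxtimes(\lambda e_i)$ together with the linearity of $T$, while additivity follows by concatenating representations of $u$ and $v$ as in Lemma \ref{lem-conca}. The main obstacle is the well-definedness step; the key insight making it work is that composition with $h$ carries the test functions $g\in B_{Y^\#}$ into scalar multiples of test functions in $B_{X^\#}$, which is precisely what lets the two instances of Proposition \ref{pro-0} communicate.
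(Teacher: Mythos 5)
Your proof is correct and follows essentially the same route as the paper: reduce well-definedness to Proposition \ref{pro-0}, transport test functions via $g\mapsto g\circ h$ suitably normalized into $B_{X^\#}$, apply $T$, and invoke Proposition \ref{pro-0} again in $Y\boxtimes F$, with linearity handled exactly as you describe. The only cosmetic difference is that the paper normalizes by $1+\Lip(h)$, which avoids your separate treatment of the degenerate case $\Lip(h)=0$.
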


\begin{proof}
Let $u=\sum_{i=1}^n \delta_{(x_i,y_i)}\boxtimes e_i$ and $v=\sum_{i=1}^m\delta_{(x'_i,y'_i)}\boxtimes e'_i$ be in $X\boxtimes E$. If $u=v$, then Proposition \ref{pro-0} says us that $\sum_{i=1}^{n}(g(x_i)-g(y_i))e_i=\sum_{i=1}^{m}(g(x'_i)-g(y'_i))e'_i$ for all $g\in B_{X^\#}$. In particular, this holds for all function in $B_{X^\#}$ of the form $(f\circ h)/(1+\Lip(h))$ with $f$ varying in $B_{Y^\#}$. It follows that $\sum_{i=1}^{n}\left(f(h(x_i))-f(h(y_i))\right)T(e_i)=\sum_{i=1}^{m}\left(f(h(x'_i))-f(h(y'_i))\right)T(e'_i)$ for all $f\in B_{Y^\#}$, and this implies that $\sum_{i=1}^n \delta_{(h(x_i),h(y_i))}\boxtimes T(e_i)=\sum_{i=1}^m\delta_{(h(x'_i),h(y'_i))}\boxtimes T(e'_i)$ again by Proposition \ref{pro-0}. Hence the map $h\boxtimes T$ is well defined.

We see that $h\boxtimes T$ is linear. Let $\lambda\in\K$. Then $\lambda u=\sum_{i=1}^n \delta_{(x_i,y_i)}\boxtimes(\lambda e_i)$, and Definition \ref{def-Lipschitz tensor product operator} and Lemma \ref{lem-1} give
\begin{align*}
(h\boxtimes T)(\lambda u)
&=\sum_{i=1}^n\delta_{(h(x_i),h(y_i))}\boxtimes T(\lambda e_i)\\
&=\sum_{i=1}^n\delta_{(h(x_i),h(y_i))}\boxtimes \lambda T(e_i)\\
&=\lambda\sum_{i=1}^n\delta_{(h(x_i),h(y_i))}\boxtimes T (e_i)\\
&=\lambda(h\boxtimes T)(u).
\end{align*}
Take $u+v=\sum_{i=1}^{n+m} \delta_{(x''_i,y''_i)}\boxtimes e''_i$ as in Lemma \ref{lem-conca}. Then we have
\begin{align*}
(h\boxtimes T)(u+v)
&=\sum_{i=1}^{n+m}\delta_{(h(x''_i),h(y''_i))}\boxtimes T(e''_i)\\
&=\sum_{i=1}^{n}\delta_{(h(x''_i),h(y''_i))}\boxtimes T(e''_i)+\sum_{i=n+1}^{n+m}\delta_{(h(x''_i),h(y''_i))}\boxtimes T(e''_i)\\
&=\sum_{i=1}^{n}\delta_{(h(x_i),h(y_i))}\boxtimes T(e_i)+\sum_{i=n+1}^{n+m}\delta_{(h(x'_{i-n}),h(y'_{i-n}))}\boxtimes T(e'_{i-n})\\
&=\sum_{i=1}^{n}\delta_{(h(x_i),h(y_i))}\boxtimes T(e_i)+\sum_{i=1}^{m}\delta_{(h(x'_i),h(y'_i))}\boxtimes T(e'_i)\\
&=(h\boxtimes T)(u)+(h\boxtimes T)(v).
\end{align*}
\end{proof}

\section{Lipschitz cross-norms}\label{3}

We denote the linear space $X\boxtimes E$ endowed with a norm $\alpha$ by $X\boxtimes_\alpha E$, and its completion by $X\widehat{\boxtimes}_\alpha E$. We are looking for a norm on the linear space $X\boxtimes E$, and for our purposes it is convenient to work with norms that satisfy the following conditions.

\begin{definition}\label{def-cross-norm}
Let $X$ be a pointed metric space and $E$ a Banach space. We say that a norm $\alpha$ on $X\boxtimes E$ is a Lipschitz cross-norm if 
$$
\alpha\left(\delta_{(x,y)}\boxtimes e\right)=d(x,y)\left\|e\right\|
$$
for all $(x,y)\in X^2$ and $e\in E$. 

A Lipschitz cross-norm $\alpha$ on $X\boxtimes E$ is said to be dualizable if given $g\in X^\#$ and $\phi\in E^*$, we have
$$
\left|\sum_{i=1}^n\left(g(x_i)-g(y_i)\right)\left\langle \phi,e_i\right\rangle\right|
\leq\Lip(g)\left\|\phi\right\|\alpha\left(\sum_{i=1}^n \delta_{(x_i,y_i)}\boxtimes e_i\right)
$$
for all $\sum_{i=1}^n \delta_{(x_i,y_i)}\boxtimes e_i\in X\boxtimes E$.

A Lipschitz cross-norm $\alpha$ on $X\boxtimes E$ is called uniform if given $h\in\Lipo(X,X)$ and $T\in\L(E;E)$, we have
$$
\alpha\left(\sum_{i=1}^n\delta_{(h(x_i),h(y_i))}\boxtimes T(e_i)\right)
\leq\Lip(h)\left\|T\right\|\alpha\left(\sum_{i=1}^n \delta_{(x_i,y_i)}\boxtimes e_i\right)
$$ 
for all $\sum_{i=1}^n \delta_{(x_i,y_i)}\boxtimes e_i\in X\boxtimes E$.
\end{definition}

The dualizable Lipschitz cross-norms on $X\boxtimes E$ may be characterized by the boundedness of the Lipschitz tensor product functionals.

\begin{proposition}\label{prop-dual}
A Lipschitz cross-norm $\alpha$ on $X\boxtimes E$ is dualizable if and only if, for each $g\in X^\#$ and $\phi\in E^*$, the linear functional $g\boxtimes\phi\colon X\boxtimes_\alpha E\to\mathbb{K}$ is bounded and $\left\|g\boxtimes\phi\right\|=\Lip(g)\left\|\phi\right\|$.
\end{proposition}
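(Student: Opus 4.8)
The plan is to prove the two implications separately; the substantive content sits in the ``only if'' direction, where the cross-norm hypothesis is genuinely used.

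For the ``only if'' direction, suppose $\alpha$ is dualizable and fix $g\in X^\#$, $\phi\in E^*$. The upper bound is immediate: the defining inequality of dualizability, applied to an arbitrary $u=\sum_{i=1}^n\delta_{(x_i,y_i)}\boxtimes e_i$, reads
\[
|(g\boxtimes\phi)(u)|=\left|\sum_{i=1}^n(g(x_i)-g(y_i))\pair{\phi}{e_i}\right|\leq\Lip(g)\n{\phi}\,\alpha(u),
\]
so $g\boxtimes\phi$ is bounded on $X\boxtimes_\alpha E$ with $\n{g\boxtimes\phi}\leq\Lip(g)\n{\phi}$. For the matching lower bound I would test the functional on elementary tensors, where the cross-norm property pins down $\alpha$ exactly. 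For $x\neq y$ in $X$ and $e\neq 0$ in $E$, using $\alpha(\delta_{(x,y)}\boxtimes e)=d(x,y)\n{e}$ gives
\[
\frac{|(g\boxtimes\phi)(\delta_{(x,y)}\boxtimes e)|}{\alpha(\delta_{(x,y)}\boxtimes e)}=\frac{|g(x)-g(y)|}{d(x,y)}\cdot\frac{|\pair{\phi}{e}|}{\n{e}}.
\]
The two factors depend on disjoint data, so the supremum of the product is the product of the suprema; recognizing these as $\Lip(g)$ and $\n{\phi}$ respectively shows $\n{g\boxtimes\phi}\geq\Lip(g)\n{\phi}$. Combining the two bounds yields the claimed equality (the degenerate cases $g=0$ or $\phi=0$ being trivial, since both sides then vanish).

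The ``if'' direction is then essentially a restatement. Assuming $g\boxtimes\phi$ is bounded with $\n{g\boxtimes\phi}=\Lip(g)\n{\phi}$, the basic operator-norm estimate $|(g\boxtimes\phi)(u)|\leq\n{g\boxtimes\phi}\,\alpha(u)$, written out via the formula for $(g\boxtimes\phi)(u)$, becomes
\[
\left|\sum_{i=1}^n(g(x_i)-g(y_i))\pair{\phi}{e_i}\right|\leq\Lip(g)\n{\phi}\,\alpha(u)
\]
for every $u=\sum_{i=1}^n\delta_{(x_i,y_i)}\boxtimes e_i$, which is exactly the dualizability condition. Here only the inequality $\n{g\boxtimes\phi}\leq\Lip(g)\n{\phi}$ is needed, so the full norm equality in the hypothesis is more than required.

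I do not expect any real obstacle: the only nontrivial step is the lower bound $\n{g\boxtimes\phi}\geq\Lip(g)\n{\phi}$, and even there the work amounts to separating the supremum over pairs $(x,y)$ from the supremum over vectors $e$ and identifying each with the relevant norm.
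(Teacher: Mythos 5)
Your proposal is correct and follows essentially the same route as the paper: the dualizability inequality gives the upper bound, the cross-norm identity on elementary tensors $\delta_{(x,y)}\boxtimes e$ gives the lower bound by separating the suprema over $(x,y)$ and $e$, and the converse is the operator-norm estimate read backwards. Your closing observation that only the inequality $\n{g\boxtimes\phi}\leq\Lip(g)\n{\phi}$ is needed for the ``if'' direction is exactly the content of Remark \ref{prop-dual-3} in the paper.
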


\begin{proof}
Let $\alpha$ be a Lipschitz cross-norm on $X\boxtimes E$. Given $g\in X^\#$ and $\phi\in E^*$, if $\alpha$ is dualizable, we have
$$
\left|(g\boxtimes\phi)\left(\sum_{i=1}^n \delta_{(x_i,y_i)}\boxtimes e_i\right)\right|
=\left|\sum_{i=1}^n (g(x_i)-g(y_i))\left\langle\phi,e_i\right\rangle\right|
\leq\Lip(g)\left\|\phi\right\|\alpha\left(\sum_{i=1}^n \delta_{(x_i,y_i)}\boxtimes e_i\right)
$$ 
for all $\sum_{i=1}^n \delta_{(x_i,y_i)}\boxtimes e_i\in X\boxtimes E$. Hence the linear functional $g\boxtimes\phi$ is bounded on $X\boxtimes_\alpha E$ and $\left\|g\boxtimes\phi\right\|\leq\Lip(g)\left\|\phi\right\|$. The opposite inequality $\Lip(g)\left\|\phi\right\|\leq\left\|g\boxtimes\phi\right\|$ is deduced from the fact that 
$$
\left|g(x)-g(y)\right|\left|\left\langle\phi,e\right\rangle\right|
=\left|(g\boxtimes\phi)(\delta_{(x,y)}\boxtimes e)\right|
\leq\left\|g\boxtimes\phi\right\|\alpha\left(\delta_{(x,y)}\boxtimes e\right)
=\left\|g\boxtimes\phi\right\|d(x,y)\left\|e\right\|
$$
for all $x,y\in X$ and $e\in E$.

Conversely, if for any $g\in X^\#$ and $\phi\in E^*$, the linear functional $g\boxtimes\phi\colon X\boxtimes_\alpha E\to\K$ is bounded and $\left\|g\boxtimes \phi\right\|=\Lip(g)\left\|\phi\right\|$, then 
\begin{align*}
\left|\sum_{i=1}^n\left(g(x_i)-g(y_i)\right)\left\langle \phi,e_i\right\rangle\right|
&=\left|(g\boxtimes\phi)\left(\sum_{i=1}^n \delta_{(x_i,y_i)}\boxtimes e_i\right)\right|\\
&\leq\left\|g\boxtimes\phi\right\|\alpha\left(\sum_{i=1}^n \delta_{(x_i,y_i)}\boxtimes e_i\right)\\
&=\Lip(g)\left\|\phi\right\|\alpha\left(\sum_{i=1}^n \delta_{(x_i,y_i)}\boxtimes e_i\right)
\end{align*}
for all $\sum_{i=1}^n \delta_{(x_i,y_i)}\boxtimes e_i\in X\boxtimes E$, and so $\alpha$ is dualizable.
\end{proof}

Similarly, the boundedness of the Lipschitz tensor product operators characterizes the uniform Lipschitz cross-norms on $X\boxtimes E$.

\begin{proposition}\label{prop-dual-2}
A Lipschitz cross-norm $\alpha$ on $X\boxtimes E$ is uniform if and only if, for each $h\in\Lipo(X,X)$ and $T\in\L(E;E)$, the linear operator $h\boxtimes T\colon X\boxtimes_\alpha E\to X\boxtimes_\alpha E$ is bounded and $\left\|h\boxtimes T\right\|=\Lip(h)\left\|T\right\|$.
\end{proposition}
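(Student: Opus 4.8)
The plan is to mirror the proof of Proposition \ref{prop-dual}, exploiting that the defining inequality of a uniform Lipschitz cross-norm is, after unwinding Definition \ref{def-Lipschitz tensor product operator}, exactly a boundedness statement for the operator $h\boxtimes T$.

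First I would treat the direct implication. Fixing $h\in\Lipo(X,X)$ and $T\in\L(E;E)$, note that for $u=\sum_{i=1}^n\delta_{(x_i,y_i)}\boxtimes e_i$ the definition of $h\boxtimes T$ gives $(h\boxtimes T)(u)=\sum_{i=1}^n\delta_{(h(x_i),h(y_i))}\boxtimes T(e_i)$, so the uniformity inequality reads precisely $\alpha((h\boxtimes T)(u))\leq\Lip(h)\|T\|\,\alpha(u)$ for all $u\in X\boxtimes E$. By Lemma \ref{lem-7} the map $h\boxtimes T$ is linear, hence it is bounded on $X\boxtimes_\alpha E$ with $\|h\boxtimes T\|\leq\Lip(h)\|T\|$.

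The key step is the reverse estimate $\Lip(h)\|T\|\leq\|h\boxtimes T\|$, which in fact holds for every Lipschitz cross-norm. I would test $h\boxtimes T$ on elementary tensors: for $x,y\in X$ with $x\neq y$ and $e\in E$ with $e\neq 0$, the cross-norm property applied twice yields $\alpha(\delta_{(x,y)}\boxtimes e)=d(x,y)\|e\|$ and $\alpha((h\boxtimes T)(\delta_{(x,y)}\boxtimes e))=\alpha(\delta_{(h(x),h(y))}\boxtimes T(e))=d(h(x),h(y))\|T(e)\|$. Dividing, $\|h\boxtimes T\|\geq d(h(x),h(y))\|T(e)\|/(d(x,y)\|e\|)$, and since the right-hand side factors as a product of two independent quotients, taking the supremum over $x\neq y$ and over $e\neq 0$ separately gives $\|h\boxtimes T\|\geq\Lip(h)\|T\|$. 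Combining the two inequalities yields the desired equality $\|h\boxtimes T\|=\Lip(h)\|T\|$.

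For the converse, I would simply reverse the first paragraph: assuming that $h\boxtimes T$ is bounded with $\|h\boxtimes T\|=\Lip(h)\|T\|$ for all $h$ and $T$, the boundedness gives $\alpha(\sum_{i=1}^n\delta_{(h(x_i),h(y_i))}\boxtimes T(e_i))=\alpha((h\boxtimes T)(u))\leq\|h\boxtimes T\|\,\alpha(u)=\Lip(h)\|T\|\,\alpha(u)$, which is exactly the condition for $\alpha$ to be uniform. The only point requiring a little care is the factorization of the supremum in the lower bound; everything else is a direct transcription of the definitions, as in Proposition \ref{prop-dual}.
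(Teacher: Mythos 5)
Your proposal is correct and follows essentially the same route as the paper: the uniformity inequality is read off as boundedness of $h\boxtimes T$ with $\left\|h\boxtimes T\right\|\leq\Lip(h)\left\|T\right\|$, the reverse estimate comes from testing on elementary tensors $\delta_{(x,y)}\boxtimes e$ and using the cross-norm property on both sides, and the converse is a direct reversal. The factorization of the supremum that you flag is unproblematic, since the two nonnegative quotients depend on disjoint sets of variables, and is exactly the step the paper performs implicitly.
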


\begin{proof}
Let $\alpha$ be a Lipschitz cross-norm on $X\boxtimes E$. If $\alpha$ is uniform, given $h\in\Lipo(X,X)$ and $T\in\L(E;E)$, we have
$$
\alpha\left((h\boxtimes T)\left(\sum_{i=1}^n \delta_{(x_i,y_i)}\boxtimes e_i\right)\right)
=\alpha\left(\sum_{i=1}^n\delta_{(h(x_i),h(y_i))}\boxtimes T(e_i)\right)
\leq\Lip(h)\left\|T\right\|\alpha\left(\sum_{i=1}^n \delta_{(x_i,y_i)}\boxtimes e_i\right)
$$
for all $\sum_{i=1}^n \delta_{(x_i,y_i)}\boxtimes e_i\in X\boxtimes E$. It follows that the linear operator $h\boxtimes T$ is bounded on $X\boxtimes_\alpha E$ and $\left\|h\boxtimes T\right\|\leq\Lip(h)\left\|T\right\|$. For the reverse inequality, notice that
\begin{align*}
d(h(x),h(y))\left\|T(e)\right\|
&=\alpha\left(\delta_{(h(x),h(y))}\boxtimes T(e)\right)\\
&=\alpha\left((h\boxtimes T)\left(\delta_{(x,y)}\boxtimes e\right)\right)\\
&\leq\left\|h\boxtimes T\right\|\alpha\left(\delta_{(x,y)}\boxtimes e\right)\\
&=\left\|h\boxtimes T\right\|d(x,y)\left\|e\right\|
\end{align*}
for all $x,y\in X$ and $e\in E$, and therefore $\Lip(h)\left\|T\right\|\leq\left\|h\boxtimes T\right\|$.

Conversely, if for each $h\in\Lipo(X,X)$ and $T\in\L(E;E)$, the linear map $h\boxtimes T\colon X\boxtimes_\alpha E\to X\boxtimes_\alpha E$ is bounded and $\left\|h\boxtimes T\right\|=\Lip(h)\left\|T\right\|$, then 
\begin{align*}
\alpha\left(\sum_{i=1}^n\delta_{(h(x_i),h(y_i))}\boxtimes T(e_i)\right)
&=\alpha\left((h\boxtimes T)\left(\sum_{i=1}^n \delta_{(x_i,y_i)}\boxtimes e_i\right)\right)\\
&\leq\left\|h\boxtimes T\right\|\alpha\left(\sum_{i=1}^n \delta_{(x_i,y_i)}\boxtimes e_i\right)\\
&=\Lip(h)\left\|T\right\|\alpha\left(\sum_{i=1}^n \delta_{(x_i,y_i)}\boxtimes e_i\right)
\end{align*}
for all $\sum_{i=1}^n \delta_{(x_i,y_i)}\boxtimes e_i\in X\boxtimes E$, and so $\alpha$ is uniform.
\end{proof}

\begin{remark}\label{prop-dual-3}
A reading of the proofs of the two preceding propositions shows that a Lipschitz cross-norm $\alpha$ on $X\boxtimes E$ is dualizable (uniform) if for each $g\in X^\#$ and $\phi\in E^*$, then $g\boxtimes\phi\in (X\boxtimes_\alpha E)^*$ and $\left\|g\boxtimes\phi\right\|\leq\Lip(g)\left\|\phi\right\|$ (respectively, if for each $h\in\Lipo(X,X)$ and $T\in\L(E;E)$, then $h\boxtimes T\in\L(X\boxtimes_\alpha E;X\boxtimes_\alpha E)$ and $\left\|h\boxtimes T\right\|\leq\Lip(h)\left\|T\right\|$).
\end{remark}

As a consequence of this remark, note that if $\alpha$ is a dualizable Lipschitz cross-norm on $X\boxtimes E$, then $X^\#\boxast_{\alpha'} E^*$ is a linear subspace of $(X\boxtimes_\alpha E)^*$. Next we introduce the concept of Lipschitz cross-norm on $X^\#\boxast E^*$.

\begin{definition}\label{def-Lipschitz-cross-norm for}
Let $X$ be a pointed metric space and $E$ a Banach space. We say that a norm $\beta$ on $X^\#\boxast E^*$ is a Lipschitz cross-norm if $\beta(g\boxtimes \phi)=\Lip(g)\left\|\phi\right\|$ for all $g\in X^\#$ and $\phi\in E^*$. 
\end{definition}

As a consequence of Proposition \ref{prop-dual}, we have the following Lipschitz cross-norm on $X^\#\boxast E^*$.

\begin{corollary}\label{pro-dual-norm}
Let $\alpha$ be a dualizable Lipschitz cross-norm on $X\boxtimes E$. The restriction to $X^\#\boxast E^*$ of the canonical norm of $(X\boxtimes_\alpha E)^*$, that is, the map $\alpha'\colon X^\#\boxast E^*\to\R$, given by   
$$
\alpha'(u^*)
=\sup\left\{\left|\left(\sum_{j=1}^m g_j\boxtimes\phi_j\right)\left(\sum_{i=1}^n \delta_{(x_i,y_i)}\boxtimes e_i\right)\right|
\colon \alpha\left(\sum_{i=1}^n \delta_{(x_i,y_i)}\boxtimes e_i\right)\leq 1\right\}
$$
for $u^*=\sum_{j=1}^m g_j\boxtimes \phi_j\in X^\#\boxast E^*$, is a Lipschitz cross-norm on $X^\#\boxast E^*$. 
\end{corollary}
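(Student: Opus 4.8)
The plan is to verify that $\alpha'$ is well defined and satisfies the three norm axioms together with the Lipschitz cross-norm condition $\alpha'(g\boxtimes\phi)=\Lip(g)\n{\phi}$. First I would observe that by Remark \ref{prop-dual-3}, since $\alpha$ is dualizable, each elementary functional $g\boxtimes\phi$ lies in $(X\boxtimes_\alpha E)^*$ with $\n{g\boxtimes\phi}\leq\Lip(g)\n{\phi}<\infty$; hence any $u^*=\sum_{j=1}^m g_j\boxtimes\phi_j\in X^\#\boxast E^*$, being a finite sum of bounded functionals, is itself a bounded linear functional on $X\boxtimes_\alpha E$. Therefore $X^\#\boxast E^*$ is genuinely a linear subspace of $(X\boxtimes_\alpha E)^*$, and $\alpha'$ is nothing other than the restriction of the canonical dual norm $\n{\cdot}_{(X\boxtimes_\alpha E)^*}$ to this subspace. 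This is the key structural remark, and it immediately settles well-definedness: the value $\alpha'(u^*)$ depends only on the functional $u^*$ itself, not on the chosen representation $\sum_j g_j\boxtimes\phi_j$, because the supremum in the definition is computed from the action of $u^*$ on the unit ball of $X\boxtimes_\alpha E$.

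Granting that identification, the norm axioms come for free: positive homogeneity, the triangle inequality, and $\alpha'(u^*)\geq 0$ are inherited from the dual norm on $(X\boxtimes_\alpha E)^*$. The only axiom requiring a little care is definiteness, that $\alpha'(u^*)=0$ forces $u^*=0$. Here I would argue that $\alpha'(u^*)=0$ means $u^*$ vanishes on the unit ball of $X\boxtimes_\alpha E$, hence (by homogeneity) on all of $X\boxtimes E$; so $u^*$ is the zero functional in $(X\boxtimes E)'$, which is precisely the zero element of $X^\#\boxast E^*$. Thus $\alpha'$ is a genuine norm on $X^\#\boxast E^*$.

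It remains to establish the Lipschitz cross-norm identity $\alpha'(g\boxtimes\phi)=\Lip(g)\n{\phi}$, and this is the step I expect to carry the real content. The inequality $\alpha'(g\boxtimes\phi)\leq\Lip(g)\n{\phi}$ is exactly the statement that $g\boxtimes\phi$ is bounded with dual norm at most $\Lip(g)\n{\phi}$, which follows directly from dualizability of $\alpha$ as recorded in Proposition \ref{prop-dual}. For the reverse inequality I would test $g\boxtimes\phi$ against elementary tensors: using that $\alpha$ is a Lipschitz cross-norm, $\alpha(\delta_{(x,y)}\boxtimes e)=d(x,y)\n{e}$, so for $x,y\in X$ with $x\neq y$ and $e\in B_E$ one computes
$$
\frac{\left|(g\boxtimes\phi)(\delta_{(x,y)}\boxtimes e)\right|}{\alpha(\delta_{(x,y)}\boxtimes e)}
=\frac{\left|g(x)-g(y)\right|\left|\pair{\phi}{e}\right|}{d(x,y)\n{e}},
$$
and taking the supremum over such $x,y,e$ recovers $\Lip(g)\n{\phi}$ since $B_E$ norms $\phi$. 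This yields $\alpha'(g\boxtimes\phi)\geq\Lip(g)\n{\phi}$, giving equality.

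The main obstacle is essentially conceptual rather than computational: one must be careful that the supremum defining $\alpha'$ is intrinsic to $u^*$ as a functional, so that representation-independence is automatic; the bulk of the verification then reduces to invoking Proposition \ref{prop-dual} for the cross-norm identity, which is the one place where the hypothesis of dualizability of $\alpha$ is genuinely used.
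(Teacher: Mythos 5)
Your proof is correct and follows the same route the paper takes: the paper states this as an immediate consequence of Proposition \ref{prop-dual} (together with Remark \ref{prop-dual-3}), namely that dualizability puts $X^\#\boxast E^*$ inside $(X\boxtimes_\alpha E)^*$ so that $\alpha'$ is the restricted dual norm, with the cross-norm identity $\alpha'(g\boxtimes\phi)=\Lip(g)\left\|\phi\right\|$ being exactly the norm computation of that proposition. You have merely written out explicitly the details (representation-independence, definiteness, and the two inequalities for the cross-norm identity) that the paper leaves implicit.
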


\begin{definition}\label{def-Lipschitz-associated-cross-norm}
Let $X$ be a pointed metric space and $E$ a Banach space. Let $\alpha$ be a dualizable Lipschitz cross-norm on $X\boxtimes E$. The norm $\alpha'$ on $X^\#\boxast E^*$ is called the associated Lipschitz norm of $\alpha$. The vector space $X^\#\boxast E^*$ with the norm $\alpha'$ will be denoted by $X^\#\boxast_{\alpha'} E^*$ and its completion by $X^\#\widehat{\boxast}_{\alpha'} E^*$.
\end{definition}

\section{The induced Lipschitz dual norm}\label{4}

\begin{definition}\label{def-L}
For each $u=\sum_{i=1}^n \delta_{(x_i,y_i)}\boxtimes e_i\in X\boxtimes E$, define:
$$
L(u)
=\sup\left\{\left|\sum_{i=1}^n \left\langle f(x_i)-f(y_i),e_i\right\rangle\right|\colon f\in\Lipo(X,E^*),\ \Lip(f)\leq 1 \right\}.
$$
\end{definition}

Note that the supremum on the right side above exists and $L(u)\leq\sum_{i=1}^n d(x_i,y_i)\left\|e_i\right\|$ because
\begin{align*}
\left|\sum_{i=1}^n \left\langle f(x_i)-f(y_i),e_i\right\rangle\right|
&\leq\sum_{i=1}^n \left|\left\langle f(x_i)-f(y_i),e_i\right\rangle\right|\\
&\leq\sum_{i=1}^n \left\|f(x_i)-f(y_i)\right\|\left\|e_i\right\|\\
&\leq\Lip(f)\sum_{i=1}^n d(x_i,y_i)\left\|e_i\right\|
\end{align*}
for all $f\in\Lipo(X,E^*)$. Moreover, $L$ defines a map from $X\boxtimes E$ to $\R$ by Lemma \ref{lem-2}. 

\begin{theorem}\label{teo-L}
The linear space $X\boxtimes E$ is contained in $\Lipo(X,E^*)^*$ and $L$ is the dual norm of the norm $\Lip$ of $\Lipo(X,E^*)$ induced on $X\boxtimes E$. Moreover, $L$ is a Lipschitz cross-norm on $X\boxtimes E$.
\end{theorem}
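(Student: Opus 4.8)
The plan is to verify three things in turn: that each $u\in X\boxtimes E$ defines a bounded linear functional on $\Lipo(X,E^*)$, that $L$ is precisely the norm of $u$ as such a functional (i.e.\ the dual norm), and finally that $L$ satisfies the Lipschitz cross-norm identity. The first two points are nearly immediate from the setup already in place. By Lemma \ref{lem-2}, for $u=\sum_{i=1}^n\delta_{(x_i,y_i)}\boxtimes e_i$ we have $u(f)=\sum_{i=1}^n\langle f(x_i)-f(y_i),e_i\rangle$, so $u$ is a linear functional on $\Lipo(X,E^*)$; the chain of inequalities displayed just before the theorem shows $|u(f)|\leq\Lip(f)\sum_{i=1}^n d(x_i,y_i)\|e_i\|$, which proves $u$ is bounded and hence $u\in\Lipo(X,E^*)^*$. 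The definition of the dual norm then reads $\|u\|=\sup\{|u(f)|\colon\Lip(f)\leq 1\}$, and comparing this with Definition \ref{def-L} (again using Lemma \ref{lem-2}) gives $\|u\|=L(u)$ directly. Thus $L$ is the restriction to $X\boxtimes E$ of the dual norm, and in particular $L$ is a norm.

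The substantive part is the cross-norm identity $L(\delta_{(x,y)}\boxtimes e)=d(x,y)\|e\|$. The inequality $L(\delta_{(x,y)}\boxtimes e)\leq d(x,y)\|e\|$ is free from the estimate above (take $n=1$). For the reverse inequality I would exhibit, for given $x,y\in X$ and $e\in E$, a test function $f\in\Lipo(X,E^*)$ with $\Lip(f)\leq 1$ making $|(\delta_{(x,y)}\boxtimes e)(f)|=|\langle f(x)-f(y),e\rangle|$ as close to $d(x,y)\|e\|$ as desired. The natural candidate is $f=g\cdot\phi$ from Lemma \ref{lem-3}: choose $\phi\in S_{E^*}$ with $\langle\phi,e\rangle$ arbitrarily close to $\|e\|$ (Hahn--Banach), and choose a scalar Lipschitz function $g\in X^\#$ with $\Lip(g)\leq 1$ and $g(x)-g(y)$ close to $d(x,y)$ — for instance $g=d(\cdot,y)-d(0,y)$, which vanishes at the base point, has $\Lip(g)=1$, and satisfies $g(x)-g(y)=d(x,y)$. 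By Lemma \ref{lem-3}, $\Lip(g\cdot\phi)=\Lip(g)\|\phi\|\leq 1$, and $(\delta_{(x,y)}\boxtimes e)(g\cdot\phi)=(g(x)-g(y))\langle\phi,e\rangle$, which equals $d(x,y)\langle\phi,e\rangle$ and can be made arbitrarily close to $d(x,y)\|e\|$. Taking the supremum yields $L(\delta_{(x,y)}\boxtimes e)\geq d(x,y)\|e\|$, completing the identity.

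The only point requiring slight care, and the one I expect to be the main obstacle, is confirming that such a test function lands in $\Lipo(X,E^*)$ with the correct Lipschitz constant and genuinely realizes (in the limit) the full product $d(x,y)\|e\|$; this is exactly what Lemma \ref{lem-3} is designed to supply, so the obstacle is really just bookkeeping rather than a genuine difficulty. If $E^*$ attains its norm on $e$ one can take $\phi$ with $\langle\phi,e\rangle=\|e\|$ outright and avoid the limiting argument, but invoking Hahn--Banach to pick $\phi\in S_{E^*}$ with $\langle\phi,e\rangle=\|e\|$ is always available, so in fact the supremum is attained and no $\varepsilon$-approximation is needed. With the cross-norm identity established and $L$ already shown to be the dual norm, the theorem follows.
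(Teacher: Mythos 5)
Your proposal is correct and follows essentially the same route as the paper: both establish containment in $\Lipo(X,E^*)^*$ and the dual-norm identification from the displayed estimate and Lemma \ref{lem-2}, and both prove the reverse cross-norm inequality by testing against a rank-one map of the form $g\cdot\phi$ with a norming Lipschitz function $g$ (the paper uses $g(z)=d(0,x)-d(z,x)$, you use $d(z,y)-d(0,y)$, an immaterial difference) and a Hahn--Banach norming functional $\phi$. Your closing observation that the supremum is attained exactly, so no $\varepsilon$-argument is needed, matches the paper's treatment.
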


\begin{proof}
Let $x,y\in X$ and $e\in E$. Since $\delta_{(x,y)}\boxtimes e$ is a linear map on $\Lipo(X,E^*)$ and 
$$
\left|(\delta_{(x,y)}\boxtimes e)(f)\right|
=\left|\left\langle f(x)-f(y),e\right\rangle\right|
\leq\left\|f(x)-f(y)\right\|\left\|e\right\|
\leq\Lip(f)d(x,y)\left\|e\right\|
$$
for all $f\in\Lipo(X,E^*)$, then $\delta_{(x,y)}\boxtimes e\in\Lipo(X,E^*)^*$ and thus $X\boxtimes E\subset\Lipo(X,E^*)^*$. For every $u=\sum_{i=1}^n \delta_{(x_i,y_i)}\boxtimes e_i\in X\boxtimes E$, we have
$$
L(u)=\sup\left\{\left|u(f)\right|\colon f\in\Lipo(X,E^*),\ \Lip(f)\leq 1 \right\}
$$
by Lemma \ref{lem-2}, and therefore $L$ is the dual norm of the norm $\Lip$ of $\Lipo(X,E^*)$ induced on $X\boxtimes E$. Finally, we prove that $L$ is a Lipschitz cross-norm. By above-proved, $\left|(\delta_{(x,y)}\boxtimes e)(f)\right|\leq d(x,y)\left\|e\right\|$ for all $f\in\Lipo(X,E^*)$ with $\Lip(f)\leq 1$, and hence 
$L(\delta_{(x,y)}\boxtimes e)\leq d(x,y)\left\|e\right\|$. For the reverse estimate, take $\phi\in E^*$ with $\left\|\phi\right\|=1$ satisfying $\left|\left\langle\phi,e\right\rangle\right|=\left\|e\right\|$, and consider the map $f\colon X\to E^*$ given by 
$$
f(z)=(d(0,x)-d(z,x))\phi\qquad (z\in X).
$$
An easy verification shows that $f$ is in $\Lipo(X,E^*)$ with $\Lip(f)\leq 1$ and    
$$
\left|(\delta_{(x,y)}\boxtimes e)(f)\right|
=\left|\left\langle f(x)-f(y),e\right\rangle\right|
=\left|\left\langle d(x,y)\phi,e\right\rangle\right|
=d(x,y)\left|\left\langle \phi,e\right\rangle\right|
=d(x,y)\left\|e\right\|,
$$
and therefore  
$
d(x,y)\left\|e\right\|
=\left|(\delta_{(x,y)}\boxtimes e)(f)\right|
\leq L(\delta_{(x,y)}\boxtimes e)$.
\end{proof}

The following result is essentially known. For completeness we include it here with an alternate proof.

\begin{theorem}\label{theo-dual-classic}\cite[Theorem 4.1]{j70}
Let $X$ be a pointed metric space and let $E$ be a Banach space. Then $\Lipo(X,E^*)$ is isometrically isomorphic to $(X\widehat{\boxtimes}_L E)^*$, via the map $\Lambda_0\colon\Lipo(X,E^*)\to (X\widehat{\boxtimes}_L E)^*$ given by
$$
\Lambda_0(f)(u)
=\sum_{i=1}^n\left\langle f(x_i)-f(y_i),e_i\right\rangle
$$
for $f\in\Lipo(X,E^*)$ and $u=\sum_{i=1}^n \delta_{(x_i,y_i)}\boxtimes e_i\in X\boxtimes_L E$. Its inverse $\Lambda_0^{-1}\colon (X\widehat{\boxtimes}_L E)^*\to \Lipo(X,E^*)$ is defined by 
$$
\left\langle \Lambda_0^{-1}(\varphi)(x),e\right\rangle
=\left\langle\varphi,\delta_{(x,0)}\boxtimes e\right\rangle 
$$
for $\varphi\in (X\widehat{\boxtimes}_L E)^*$, $x\in X$ and $e\in E$.
\end{theorem}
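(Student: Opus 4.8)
The plan is to establish that $\Lambda_0$ is a surjective linear isometry from $\Lipo(X,E^*)$ onto $(X\widehat{\boxtimes}_L E)^*$, and that the stated formula genuinely produces its inverse. First I would recall from Theorem \ref{teo-L} that $L$ is precisely the dual norm induced on $X\boxtimes E$ by the norm $\Lip$ of $\Lipo(X,E^*)$, so that $X\boxtimes_L E$ sits isometrically inside $\Lipo(X,E^*)^*$. In this language $\Lambda_0(f)$ is nothing but the evaluation $u\mapsto \langle u,f\rangle$ from Theorem \ref{theo-dual-pair}, restricted to $X\boxtimes_L E$; by Corollary \ref{linearization} this is a well-defined linear functional on $X\boxtimes E$, and I would observe that $\Lambda$ is already known to be a linear monomorphism. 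The new content is that $\Lambda_0(f)$ is bounded with respect to $L$ and that every bounded functional arises this way with matching norm.

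Next I would verify the isometry. The definition of $L$ gives directly
$$
\bign{\Lambda_0(f)}=\sup\left\{\left|\langle u,f\rangle\right|\colon u\in X\boxtimes E,\ L(u)\leq 1\right\}
=\sup\left\{\frac{|u(f)|}{L(u)}\colon u\neq 0\right\},
$$
and I would show this equals $\Lip(f)$. The inequality $\bign{\Lambda_0(f)}\leq\Lip(f)$ follows from the estimate $|u(f)|\leq\Lip(f)\,L(u)$ implicit in the definition of $L$ (testing with $f$ itself). For the reverse inequality $\Lip(f)\leq\bign{\Lambda_0(f)}$, I would fix $x,y\in X$ with $x\neq y$, choose a functional $\phi\in S_{E^*}$ nearly norming $f(x)-f(y)$, and test $\Lambda_0(f)$ against the elementary tensor $\delta_{(x,y)}\boxtimes e$ for a suitable norm-one $e$ (or directly against $\delta_{(x,y)}\boxtimes\phi$-type elements), using $L(\delta_{(x,y)}\boxtimes e)=d(x,y)\n{e}$ from the cross-norm property to recover the difference quotient $\n{f(x)-f(y)}/d(x,y)$. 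Since $\Lambda_0(f)$ is $L$-bounded it extends uniquely to the completion $X\widehat{\boxtimes}_L E$ with the same norm, so $\Lambda_0$ maps isometrically into $(X\widehat{\boxtimes}_L E)^*$.

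For surjectivity I would take an arbitrary $\varphi\in(X\widehat{\boxtimes}_L E)^*$ and construct its preimage by the stated formula, defining $f(x)\in E^*$ through $\langle f(x),e\rangle=\langle\varphi,\delta_{(x,0)}\boxtimes e\rangle$. I would check that $e\mapsto\langle\varphi,\delta_{(x,0)}\boxtimes e\rangle$ is linear and bounded in $e$ (so $f(x)$ is a genuine element of $E^*$), that $f(0)=0$, and that $\Lip(f)\leq\n{\varphi}$: the last point uses that $f(x)-f(y)$ is norm-computed by pairing against $e\in B_E$, giving $|\langle f(x)-f(y),e\rangle|=|\langle\varphi,\delta_{(x,y)}\boxtimes e\rangle|\leq\n{\varphi}\,L(\delta_{(x,y)}\boxtimes e)=\n{\varphi}\,d(x,y)\n{e}$. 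Finally I would confirm $\Lambda_0(f)=\varphi$ by checking they agree on every elementary tensor $\delta_{(x,0)}\boxtimes e$, hence on the dense span $X\boxtimes E$, hence everywhere by continuity.

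The main obstacle I anticipate is the surjectivity step, specifically showing that the candidate $f$ recovered from $\varphi$ is actually Lipschitz rather than merely defined pointwise: one must pass from the scalar bounds $|\langle\varphi,\delta_{(x,y)}\boxtimes e\rangle|\leq\n{\varphi}\,d(x,y)\n{e}$ to a genuine norm estimate $\n{f(x)-f(y)}_{E^*}\leq\n{\varphi}\,d(x,y)$, which requires taking the supremum over $e\in B_E$ and invoking that $X\boxtimes E$ is dense in its completion so that $\varphi$ is fully determined by its action on elementary tensors. The linearity and isometry portions are then routine given the dual-pairing machinery already established.
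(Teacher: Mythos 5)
Your proposal is correct and follows essentially the same route as the paper: boundedness of $\Lambda_0(f)$ with $\|\Lambda_0(f)\|\leq\Lip(f)$ from the definition of $L$, extension to the completion by density, and reconstruction of the inverse from $\langle f(x),e\rangle=\varphi(\delta_{(x,0)}\boxtimes e)$, with $\Lip(f)\leq\|\varphi\|$ coming from the cross-norm identity $L(\delta_{(x,y)}\boxtimes e)=d(x,y)\|e\|$. The only cosmetic differences are that you obtain the reverse inequality $\Lip(f)\leq\|\Lambda_0(f)\|$ directly by testing against elementary tensors, whereas the paper reads it off from the surjectivity construction, and that your ``norming functional $\phi\in S_{E^*}$'' should simply be an element $e\in B_E$ nearly norming $f(x)-f(y)\in E^*$ (no Hahn--Banach is needed, just the definition of the dual norm on $E^*$).
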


\begin{proof}
Let $f\in\Lipo(X,E^*)$ and let $\Lambda(f)$ be the linear functional on $X\boxtimes E$ defined in Corollary \ref{linearization}. Notice that $\Lambda(f)\in (X\boxtimes_L E)^*$ and $\left\|\Lambda(f)\right\|\leq\Lip(f)$ since   
$$
\left|\Lambda(f)(u)\right|=\left|\sum_{i=1}^n\left\langle f(x_i)-f(y_i), e_i\right\rangle\right|\leq\Lip(f)L(u)
$$
for all $u=\sum_{i=1}^n \delta_{(x_i,y_i)}\boxtimes e_i\in X\boxtimes E$. By the denseness of $X\boxtimes_L E$ in $X\widehat{\boxtimes}_L E$, there is a unique continuous extension $\Lambda_0(f)$ of $\Lambda(f)$ to $X\widehat{\boxtimes}_L E$. Let $\Lambda_0\colon\Lipo(X,E^*)\to(X\widehat{\boxtimes}_L E)^*$ be the map so defined. Since $\Lambda\colon\Lipo(X,E^*)\to (X\boxtimes E)'$ is a linear monomorphism by Corollary \ref{linearization}, it follows easily that so is also $\Lambda_0$. 


In order to see that $\Lambda_0$ is a surjective isometry, let $\varphi$ be an element of $ (X\widehat{\boxtimes}_L E)^*$. Define $f\colon X\to E^*$ by
$$
\left\langle f(x),e\right\rangle=\varphi(\delta_{(x,0)}\boxtimes e)
\qquad\left(x\in X,\; e\in E\right).
$$ 
It is plain that $f(x)$ is a well-defined bounded linear functional on $E$ and that $f$ is well defined. Observe that $\left\langle f(x)-f(y),e\right\rangle=\varphi(\delta_{(x,y)}\boxtimes e)$ for all $x,y\in X$ and $e\in E$. Fix $x,y\in X$. It follows that 
$$
\left|\left\langle f(x)-f(y),e\right\rangle\right|
=\left|\varphi(\delta_{(x,y)}\boxtimes e)\right|
\leq\left\|\varphi\right\| L(\delta_{(x,y)}\boxtimes e)
=\left\|\varphi\right\| d(x,y)\left\|e\right\|
$$
for all $e\in E$, and so $\left\|f(x)-f(y)\right\|\leq \left\|\varphi\right\| d(x,y)$. Hence $f\in\Lipo(X,E^*)$ and $\Lip(f)\leq\left\|\varphi\right\|$. For any $u=\sum_{i=1}^n \delta_{(x_i,y_i)}\boxtimes e_i\in X\boxtimes_L E$, we get 
$$
\Lambda_0(f)(u)
=\sum_{i=1}^n\left\langle f(x_i)-f(y_i),e_i\right\rangle
=\sum_{i=1}^n \varphi(\delta_{(x_i,y_i)}\boxtimes e_i)
=\varphi\left(\sum_{i=1}^n \delta_{(x_i,y_i)}\boxtimes e_i\right)
=\varphi(u).
$$
Hence $\Lambda_0(f)=\varphi$ on a dense subspace of $X\widehat{\boxtimes}_L E$ and, consequently, $\Lambda_0(f)=\varphi$. Moreover, $\Lip(f)\leq\left\|\varphi\right\|=\left\|\Lambda_0(f)\right\|$. This completes the proof of the theorem. 
\end{proof} 

\section{The Lipschitz injective norm}\label{5}

We introduce the Lipschitz injective norm on $X\boxtimes E$.

\begin{definition}\label{def-inj-norm}
For each $u=\sum_{i=1}^n \delta_{(x_i,y_i)}\boxtimes e_i\in X\boxtimes E$, define:
$$
\varepsilon(u)
=\sup\left\{\left|\sum_{i=1}^n\left(g(x_i)-g(y_i)\right)\left\langle\phi,e_i\right\rangle\right|
\colon g\in B_{X^\#}, \; \phi\in B_{E^*}\right\}.
$$
\end{definition}

Notice that the supremum on the right side in the previous definition exists since
\begin{align*}
\left|\sum_{i=1}^n\left(g(x_i)-g(y_i)\right)\left\langle \phi,e_i\right\rangle\right|
&\leq\sum_{i=1}^n \left|\left(g(x_i)-g(y_i)\right)\left\langle \phi,e_i\right\rangle\right|\\
&\leq\sum_{i=1}^n \Lip(g)d(x_i,y_i)\left\|\phi\right\|\left\|e_i\right\|\\
&\leq\sum_{i=1}^n d(x_i,y_i)\left\|e_i\right\|
\end{align*}
for all $g\in B_{X^\#}$ and $\phi\in B_{E^*}$. Note that 
$$
\sum_{i=1}^n\left(g(x_i)-g(y_i)\right)\left\langle\phi,e_i\right\rangle=(g\boxtimes\phi)\left(\sum_{i=1}^n \delta_{(x_i,y_i)}\boxtimes e_i\right)=(g\boxtimes\phi)(u),
$$
and, consequently, $\varepsilon(u)$ does not depend on the representation of $u$ by Lemma \ref{lem-4}, so $\varepsilon$ defines a map from $X\boxtimes E$ to $\R$ .

\begin{theorem}\label{teo-inj-norm}
$\varepsilon$ is a uniform and dualizable Lipschitz cross-norm on $X\boxtimes E$.
\end{theorem}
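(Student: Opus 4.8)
The plan is to verify, in turn, that $\varepsilon$ is a norm, that it is a Lipschitz cross-norm, and that it is both dualizable and uniform, exploiting throughout the identity recorded just before the statement, namely $\sum_{i=1}^n(g(x_i)-g(y_i))\langle\phi,e_i\rangle=(g\boxtimes\phi)(u)$, which shows that $\varepsilon(u)=\sup\{|(g\boxtimes\phi)(u)|\colon g\in B_{X^\#},\ \phi\in B_{E^*}\}$. First I would check the norm axioms. Absolute homogeneity and the triangle inequality follow at once from the corresponding properties of the modulus applied inside the supremum, together with the linearity of each $g\boxtimes\phi$ from Lemma \ref{lem-4}; finiteness of the supremum was already noted. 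The only substantive point is positive-definiteness: if $\varepsilon(u)=0$ then $(g\boxtimes\phi)(u)=\sum_{i=1}^n(g(x_i)-g(y_i))\langle\phi,e_i\rangle=0$ for every $g\in B_{X^\#}$ and $\phi\in B_{E^*}$, whence $u=0$ by the implication (ii)$\Rightarrow$(i) of Proposition \ref{pro-0}.

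Next, for the cross-norm condition I would bound $\varepsilon(\delta_{(x,y)}\boxtimes e)$ from both sides. For $g\in B_{X^\#}$ and $\phi\in B_{E^*}$ the estimate $|(g(x)-g(y))\langle\phi,e\rangle|\leq\Lip(g)\,d(x,y)\,\|\phi\|\,\|e\|\leq d(x,y)\|e\|$ gives $\varepsilon(\delta_{(x,y)}\boxtimes e)\leq d(x,y)\|e\|$. For the reverse inequality I would exhibit extremal data: choose $\phi\in S_{E^*}$ with $\langle\phi,e\rangle=\|e\|$ by Hahn--Banach, and set $g(z)=d(0,x)-d(z,x)$, which lies in $B_{X^\#}$ (it is $1$-Lipschitz and vanishes at $0$) and satisfies $g(x)-g(y)=d(x,y)$; then $(g\boxtimes\phi)(\delta_{(x,y)}\boxtimes e)=d(x,y)\|e\|$, so $\varepsilon(\delta_{(x,y)}\boxtimes e)\geq d(x,y)\|e\|$.

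Finally I would establish dualizability and uniformity; by Remark \ref{prop-dual-3} it suffices in each case to prove the corresponding inequality with the constant $\Lip(g)\|\phi\|$ (respectively $\Lip(h)\|T\|$). Dualizability is immediate from homogeneity: for nonzero $g\in X^\#$ and $\phi\in E^*$, applying the definition of $\varepsilon$ to $g/\Lip(g)\in B_{X^\#}$ and $\phi/\|\phi\|\in B_{E^*}$ yields $|(g\boxtimes\phi)(u)|\leq\Lip(g)\|\phi\|\varepsilon(u)$, the cases $g=0$ or $\phi=0$ being trivial. For uniformity, given $h\in\Lipo(X,X)$ and $T\in\L(E;E)$, I would use that for every $g\in B_{X^\#}$ and $\phi\in B_{E^*}$ one has $(g\boxtimes\phi)\big((h\boxtimes T)(u)\big)=\big((g\circ h)\boxtimes(\phi\circ T)\big)(u)$, where $g\circ h\in X^\#$ with $\Lip(g\circ h)\leq\Lip(h)$ (using $h(0)=0$, so that the composite vanishes at the base point) and $\phi\circ T\in E^*$ with $\|\phi\circ T\|\leq\|T\|$. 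Applying the already-proved dualizability to the pair $g\circ h$, $\phi\circ T$ gives $|(g\boxtimes\phi)((h\boxtimes T)(u))|\leq\Lip(h)\|T\|\varepsilon(u)$, and taking the supremum over $g$ and $\phi$ yields $\varepsilon((h\boxtimes T)(u))\leq\Lip(h)\|T\|\varepsilon(u)$.

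The bulk of the argument reduces to the homogeneity of the defining supremum and the earlier results, so no single step is genuinely hard; the points requiring care are the positive-definiteness, where one must invoke Proposition \ref{pro-0} rather than argue directly, and the pull-back/adjoint identity in the uniform step, where one must keep track of the base-point condition $h(0)=0$ to guarantee that $g\circ h$ indeed belongs to $X^\#$.
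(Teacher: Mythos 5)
Your proposal is correct and follows essentially the same route as the paper: positive-definiteness via Proposition \ref{pro-0}, the cross-norm lower bound via the extremal pair $g(z)=d(0,x)-d(z,x)$ and a norming functional $\phi$, dualizability read off from the definition, and uniformity by pulling back through $g\circ h$ and $\phi\circ T$ (which the paper writes as $h^\#(g)$ and $T^*(\phi)$) and invoking the already-established dualizability. The only cosmetic difference is that the paper additionally records the equality $\|h^\#\|=\Lip(h)$, whereas you use only the upper bound $\Lip(g\circ h)\le\Lip(g)\Lip(h)$, which is all that the argument requires.
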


\begin{proof}
Let $u=\sum_{i=1}^n \delta_{(x_i,y_i)}\boxtimes e_i\in X\boxtimes E$. Suppose that $\varepsilon(u)=0$. Then $\sum_{i=1}^n(g(x_i)-g(y_i))\langle\phi,e_i\rangle=0$ for all $g\in B_{X^\#}$ and $\phi\in B_{E^*}$, and this happens if and only if $u=0$ by Proposition \ref{pro-0}.

For any $\lambda\in\K$, we have 
$$
\varepsilon(\lambda u)
=\sup\left\{\left|(g\boxtimes\phi)(\lambda u)\right|\colon g\in B_{X^\#}, \; \phi\in B_{E^*}\right\}
=\left|\lambda|\right|\sup\left\{\left|(g\boxtimes\phi)(u)\right|\colon g\in B_{X^\#}, \; \phi\in B_{E^*}\right\}
=\left|\lambda\right|\varepsilon(u).
$$
Given $v\in X\boxtimes E$, for any $g\in B_{X^\#}$ and $\phi\in B_{E^*}$, it holds that  
$$
\left|(g\boxtimes\phi)(u+v)\right|
\leq\left|(g\boxtimes\phi)(u)\right|+\left|(g\boxtimes\phi)(v)\right|
\leq\varepsilon(u)+\varepsilon(v),
$$
and therefore $\varepsilon(u+v)\leq\varepsilon(u)+\varepsilon(v)$. Hence $\varepsilon$ is a norm on $X\boxtimes E$. 

We claim that $\varepsilon$ is a Lipschitz cross-norm. Take $\delta_{(x,y)}\boxtimes e\in X\boxtimes E$. For any $g\in B_{X^\#}$ and $\phi\in B_{E^*}$, we have
$$
\left|\left(g(x)-g(y)\right)\left\langle\phi,e\right\rangle\right|\leq \Lip(g)d(x,y)\left\|\phi\right\|\left\|e\right\|\leq d(x,y)\left\|e\right\|,
$$
and so $\varepsilon(\delta_{(x,y)}\boxtimes e)\leq d(x,y)\left\|e\right\|$. For the converse inequality, we can find $g_0\in B_{X^\#}$ and $\phi_0\in B_{E^*}$ such that $\left|g_0(x)-g_0(y)\right|=d(x,y)$ and $\langle\phi_0,e\rangle=\left\|e\right\|$. For example, $g_0(z)=d(0,x)-d(z,x)$ for all $z\in X$. Then 
$$
\varepsilon\left(\delta_{(x,y)}\boxtimes e\right)\geq\left|\left(g_0(x)-g_0(y)\right)\left\langle\phi_0,e\right\rangle\right|=d(x,y)\left\|e\right\|,
$$
and this proves our claim.

Now take $g\in X^\#$ and $\phi\in E^*$. By Definition \ref{def-inj-norm}, we have
$$
\left|\sum_{i=1}^n\left(g(x_i)-g(y_i)\right)\left\langle\phi,e_i\right\rangle\right|
\leq\Lip(g)\left\|\phi\right\|\varepsilon\left(\sum_{i=1}^n\delta_{(x_i,y_i)}\boxtimes e_i\right)
$$
for all $\sum_{i=1}^n \delta_{(x_i,y_i)}\boxtimes e_i\in X\boxtimes E$. Hence the norm $\varepsilon$ is dualizable. Then, by Corollary \ref{pro-dual-norm}, $\varepsilon'$ is a Lipschitz cross-norm on $X^\#\boxast E^*$.

Finally, we prove that the norm $\varepsilon$ is uniform. Let $h\in\Lipo(X,X)$ and $T\in\L(E;E)$. Let $T^*$ denote the adjoint operator of $T$. We now recall that the Lipschitz adjoint map $h^\#\colon X^\#\to X^\#$, given by $h^\#(g)=g\circ h$ for all $g\in X^\#$, is a continuous linear operator and $\left\|h^\#\right\|=\Lip(h)$. 
Indeed, it is clear that $h^\#$ is linear. Let $g\in X^\#$ and $x,y\in X$. We have
$$
\left|h^\#(g)(x)-h^\#(g)(y)\right|
=\left|g(h(x))-g(h(y))\right|
\leq\Lip(g)d(h(x),h(y))
\leq\Lip(g)\Lip(h)d(x,y),
$$
hence $h^\#(g)\in X^\#$ and $\Lip(h^\#(g))\leq\Lip(g)\Lip(h)$. It follows that $h^\#$ is bounded and $\left\|h^\#\right\|\leq\Lip(h)$. Taking the function defined on $X$ by $g(z)=d(h(x),0)-d(h(x),z)$ which is in $B_{X^\#}$, we get 
$$
d(h(x),h(y))=\left|g(h(x))-g(h(y))\right|=\left|h^\#(g)(x)-h^\#(g)(y)\right|\leq\Lip(h^\#(g))d(x,y),
$$
which gives $\Lip(h)\leq\Lip(h^\#(g))\leq\left\|h^\#\right\|\Lip(g)\leq\left\|h^\#\right\|$ and so $\left\|h^\#\right\|=\Lip(h)$.

Given $\sum_{i=1}^n\delta_{(x_i,y_i)}\boxtimes e_i\in X\boxtimes E$, we have  
\begin{align*}
\left|\sum_{i=1}^n\left(g(h(x_i))-g(h(y_i))\right)\left\langle\phi,T(e_i)\right\rangle\right|
&=\left|(h^\#(g)\boxtimes T^*(\phi))\left(\sum_{i=1}^n\delta_{(x_i,y_i)}\boxtimes e_i\right)\right|\\
&\leq\varepsilon'\left(h^\#(g)\boxtimes T^*(\phi)\right)\varepsilon\left(\sum_{i=1}^n\delta_{(x_i,y_i)}\boxtimes e_i\right)\\
&=\Lip(h^\#(g))\left\|T^*(\phi)\right\|\varepsilon\left(\sum_{i=1}^n\delta_{(x_i,y_i)}\boxtimes e_i\right)\\
&\leq\Lip(h)\Lip(g)\left\|T\right\|\left\|\phi\right\|\varepsilon\left(\sum_{i=1}^n\delta_{(x_i,y_i)}\boxtimes e_i\right)\\
&\leq\Lip(h)\left\|T\right\|\varepsilon\left(\sum_{i=1}^n\delta_{(x_i,y_i)}\boxtimes e_i\right)
\end{align*}
for all $g\in B_{X^\#}$ and $\phi\in B_{E^*}$, and hence
$$
\varepsilon\left(\sum_{i=1}^n\delta_{(h(x_i),h(y_i))}\boxtimes T(e_i)\right)
\leq\Lip(h)\left\|T\right\|\varepsilon\left(\sum_{i=1}^n\delta_{(x_i,y_i)}\boxtimes e_i\right),
$$
which proves that the norm $\varepsilon$ is uniform.
\end{proof}

\begin{theorem}\label{teo-least}
$\varepsilon$ is the least dualizable Lipschitz cross-norm on $X\boxtimes E$.
\end{theorem}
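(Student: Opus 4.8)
The plan is to unpack what ``least'' means in this context: it asserts two things simultaneously, namely that $\varepsilon$ is itself a dualizable Lipschitz cross-norm on $X\boxtimes E$, and that $\varepsilon(u)\leq\alpha(u)$ for every dualizable Lipschitz cross-norm $\alpha$ on $X\boxtimes E$ and every $u\in X\boxtimes E$. The first assertion is already available from Theorem \ref{teo-inj-norm}, where $\varepsilon$ was shown to be a uniform and dualizable Lipschitz cross-norm. Consequently, the entire content of the statement reduces to establishing the comparison $\varepsilon\leq\alpha$ against an arbitrary competitor.

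To prove $\varepsilon\leq\alpha$, I would fix an arbitrary dualizable Lipschitz cross-norm $\alpha$ and an arbitrary element $u=\sum_{i=1}^n\delta_{(x_i,y_i)}\boxtimes e_i\in X\boxtimes E$, and then invoke the defining inequality of dualizability directly. Indeed, for each $g\in B_{X^\#}$ and each $\phi\in B_{E^*}$ one has $\Lip(g)\leq 1$ and $\left\|\phi\right\|\leq 1$, so the dualizability of $\alpha$ yields $\left|\sum_{i=1}^n\left(g(x_i)-g(y_i)\right)\left\langle\phi,e_i\right\rangle\right|\leq\Lip(g)\left\|\phi\right\|\alpha(u)\leq\alpha(u)$. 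Since this bound is uniform in $g$ and $\phi$, taking the supremum over all $g\in B_{X^\#}$ and $\phi\in B_{E^*}$ of the left-hand side produces exactly $\varepsilon(u)\leq\alpha(u)$, which is the desired comparison.

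The main point to emphasize is that there is essentially no obstacle in this final step: the dualizability condition was precisely tailored so that the supremum defining $\varepsilon$ is controlled by any dualizable norm, the admissible pairs $(g,\phi)$ in the supremum being exactly those to which the dualizability inequality applies with constants at most $1$. The genuinely substantive work—verifying the cross-norm identity for $\varepsilon$ and its dualizability and uniformity—was already carried out in Theorem \ref{teo-inj-norm}. The only subtlety worth a sentence is that the quantity being estimated, $\sum_{i=1}^n\left(g(x_i)-g(y_i)\right)\left\langle\phi,e_i\right\rangle=(g\boxtimes\phi)(u)$, is independent of the chosen representation of $u$ by Lemma \ref{lem-4}, so passing to the supremum is well posed and the inequality is unambiguous.
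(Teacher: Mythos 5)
Your proposal is correct and rests on exactly the same ingredient as the paper's proof, namely the dualizability inequality applied with $g\in B_{X^\#}$ and $\phi\in B_{E^*}$ so that $\Lip(g)\left\|\phi\right\|\leq 1$. The paper phrases this as a proof by contradiction routed through the associated norm $\alpha'$ (via Corollary \ref{pro-dual-norm}), but your direct supremum argument is an equivalent and in fact cleaner rendering of the same idea.
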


\begin{proof}
According to Theorem \ref{teo-inj-norm}, $\varepsilon$ is a dualizable Lipschitz cross-norm on $X\boxtimes E$. Let $\alpha$ be a dualizable Lipschitz cross-norm on $X\boxtimes E$ and assume, for contradiction, that 
$$
\alpha\left(\sum_{i=1}^n \delta_{(x_i,y_i)}\boxtimes e_i\right)
<\varepsilon\left(\sum_{i=1}^n \delta_{(x_i,y_i)}\boxtimes e_i\right)
$$
for some $\sum_{i=1}^n \delta_{(x_i,y_i)}\boxtimes e_i\in X\boxtimes E$. By the definition of $\varepsilon$, there exist $g\in B_{X^\#}$ and $\phi\in B_{E^*}$ such that 
$$
\alpha\left(\sum_{i=1}^n \delta_{(x_i,y_i)}\boxtimes e_i\right)
<\left|\sum_{i=1}^n\left(g(x_i)-g(y_i)\right)\left\langle\phi,e_i\right\rangle\right|.
$$
By Corollary \ref{pro-dual-norm}, $\alpha'$ is a Lipschitz cross-norm on $X^{\#}\boxast E^*$, and we have 
$$
\left|\sum_{i=1}^n\left(g(x_i)-g(y_i)\right)\left\langle\phi,e_i\right\rangle\right|
=\left|(g\boxtimes\phi)\left(\sum_{i=1}^n \delta_{(x_i,y_i)}\boxtimes e_i\right)\right|
\leq\alpha'(g\boxtimes\phi)\alpha\left(\sum_{i=1}^n \delta_{(x_i,y_i)}\boxtimes e_i\right).
$$
Hence $\alpha'(g\boxtimes\phi)>1$ and thus $\Lip(g)\left\|\phi\right\|<\alpha'(g\boxtimes\phi)$. This contradicts that $\alpha'$ is a Lipschitz cross-norm. Therefore $\alpha\geq\varepsilon$ and this proves the theorem.
\end{proof}

The completion $X\widehat{\boxtimes}_{\varepsilon} E$ of $X\boxtimes_{\varepsilon} E$ is called the injective Lipschitz tensor product of $X$ and $E$.  Next we justify this terminology in the case $\mathbb{K}=\mathbb{R}$. 
\begin{theorem}
Let $X$ be a pointed metric space and let $E$ be a Banach space over $\mathbb{R}$. Let $X_0 \subset X$ be a subset of $X$ containing $0$, and let $E_0$ be a closed linear subspace of $E$.
Then $X_0\widehat{\boxtimes}_{\varepsilon}E_0$ is a 
linear subspace of $X\widehat{\boxtimes}_{\varepsilon}E$.
\end{theorem}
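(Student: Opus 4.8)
The plan is to exhibit a natural linear isometry from $X_0\boxtimes_\varepsilon E_0$ into $X\boxtimes_\varepsilon E$ and then pass to completions. Since $0\in X_0$ with the inherited metric, $X_0$ is a pointed metric space and $E_0$ is a Banach space, so both sides are defined. For a representation $u=\sum_{i=1}^n\delta_{(x_i,y_i)}\boxtimes e_i$ with $x_i,y_i\in X_0$ and $e_i\in E_0$, let $\iota(u)$ be the same formal expression read in $X\boxtimes E$. First I would check that $\iota$ is well defined: by the equivalence of (i) and (iii) in Proposition \ref{pro-0}, $u=0$ in $X_0\boxtimes E_0$ means $\sum_{i}(g(x_i)-g(y_i))e_i=0$ for every $g\in B_{X_0^\#}$; since restriction $g\mapsto g|_{X_0}$ sends $B_{X^\#}$ into $B_{X_0^\#}$ and the sum depends only on this restriction, the same identity holds for every $g\in B_{X^\#}$, whence $\iota(u)=0$ in $X\boxtimes E$ by the same proposition applied in $X$.

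The heart of the argument is the identity
$$
\varepsilon_{X,E}\bigl(\iota(u)\bigr)=\varepsilon_{X_0,E_0}(u)
$$
for every such $u$, where the subscripts record in which pair of spaces the injective norm of Definition \ref{def-inj-norm} is computed. I rely on two extension principles. On the $E$-side, every $\phi_0\in B_{E_0^*}$ extends by Hahn--Banach to some $\phi\in B_{E^*}$ with $\phi|_{E_0}=\phi_0$, while conversely every $\phi\in B_{E^*}$ restricts to $\phi|_{E_0}\in B_{E_0^*}$. On the $X$-side, every $g\in B_{X^\#}$ restricts to $g|_{X_0}\in B_{X_0^\#}$, and conversely every $g_0\in B_{X_0^\#}$ extends to some $g\in B_{X^\#}$ with $g|_{X_0}=g_0$ via the McShane extension $g(x)=\inf_{a\in X_0}\{g_0(a)+\Lip(g_0)\,d(x,a)\}$, which preserves the Lipschitz constant and the value $0$ at the base point. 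Because the scalar $\sum_{i}(g(x_i)-g(y_i))\langle\phi,e_i\rangle$ depends only on $g|_{X_0}$ and on $\phi|_{E_0}$, the supremum defining $\varepsilon_{X,E}(\iota(u))$ ranges over exactly the same set of values as the one defining $\varepsilon_{X_0,E_0}(u)$; running through both inclusions yields the two inequalities and hence the equality.

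It is precisely this step that forces the hypothesis $\mathbb{K}=\mathbb{R}$, and so it is the point to be most careful about: the McShane formula produces a Lipschitz-constant-preserving scalar extension only in the real case, whereas no analogous norm-preserving extension of complex Lipschitz functions is available in general.

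Finally, the displayed equality shows that $\iota\colon X_0\boxtimes_\varepsilon E_0\to X\boxtimes_\varepsilon E$ is a linear isometry. A linear isometry between normed spaces extends uniquely to a linear isometry $\widehat{\iota}\colon X_0\widehat{\boxtimes}_\varepsilon E_0\to X\widehat{\boxtimes}_\varepsilon E$ between their completions; as the domain is complete and $\widehat{\iota}$ is isometric, its range is complete, hence a closed linear subspace of $X\widehat{\boxtimes}_\varepsilon E$. Identifying $X_0\widehat{\boxtimes}_\varepsilon E_0$ with this range proves the claim.
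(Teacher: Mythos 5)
Your proof is correct and follows essentially the same route as the paper's: both establish $\varepsilon_{X\boxtimes E}(u)=\varepsilon_{X_0\boxtimes E_0}(u)$ by extending functionals via Hahn--Banach on the $E$-side and via the (real-valued, Lipschitz-constant-preserving) McShane extension on the $X$-side, and by restricting in the other direction. The only differences are cosmetic: you verify well-definedness of the inclusion directly from Proposition \ref{pro-0} where the paper cites Lemma \ref{lem-7}, and you spell out the McShane formula and the passage to completions, which the paper leaves implicit.
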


\begin{proof}
Let $u=\sum_{i=1}^n \delta_{(x_i,y_i)}\boxtimes e_i\in X_0\boxtimes E_0$. Note that $u$ can be considered as an element of $X\boxtimes E$ because of Lemma \ref{lem-7}, which says that the Lipschitz tensor product of the two embeddings is well defined. It is sufficient to prove that $\varepsilon_{X\boxtimes E}(u)=\varepsilon_{X_0\boxtimes E_0}(u)$, where 
\begin{align*} 
\varepsilon_{X_0\boxtimes E_0}(u)&=\sup\left\{\left|\sum_{i=1}^n\left(g_0(x_i)-g_0(y_i)\right)\left\langle\phi_0,e_i\right\rangle\right|\colon g_0\in B_{X_0^\#}, \; \phi_0\in B_{E_0^*}\right\},\\
\varepsilon_{X\boxtimes E}(u)&=\sup\left\{\left|\sum_{i=1}^n\left(g(x_i)-g(y_i)\right)\left\langle\phi,e_i\right\rangle\right|\colon g\in B_{X^\#}, \; \phi\in B_{E^*}\right\}.
\end{align*}
By applying the classical Hahn--Banach theorem we can extend each $\phi_0\in B_{E_0^*}$ to a $\phi\in B_{E^*}$,
and by applying the nonlinear Hahn--Banach theorem we can extend each $g_0\in  B_{X_0^\#}$ to a $g\in  B_{X^\#}$,
hence we see that $\varepsilon_{X_0\boxtimes E_0}(u)\leq\varepsilon_{X\boxtimes E}(u)$. Conversely, by restricting the functionals $\phi\in B_{E^*}$ to $E_0$
and the Lipschitz functions $g \in  B_{X^\#}$ to $X_0$, we obtain that $\varepsilon_{X\boxtimes E}(u)\leq\varepsilon_{X_0\boxtimes E_0}(u)$.
\end{proof}

We can identify $X\widehat{\boxtimes}_{\varepsilon} E$ with the space of all approximable bounded linear operators of $(X^\#,\tau_p)$ to $E$.

\begin{proposition}\label{prop-identification2}
The map $J\colon X\boxtimes_{\varepsilon} E\to\mathcal{F}((X^\#,\tau_p);E)$, defined by 
$$
J(u)(g)=\sum_{i=1}^n\left(g(x_i)-g(y_i)\right)e_i
$$
for $u=\sum_{i=1}^n \delta_{(x_i,y_i)}\boxtimes e_i\in X\boxtimes E$ and $g\in X^\#$, is an isometric isomorphism. As a consequence,  $X\widehat{\boxtimes}_{\varepsilon} E$ is isometrically isomorphic to the closure in the operator norm topology of $\mathcal{F}((X^\#,\tau_p);E)$.
\end{proposition}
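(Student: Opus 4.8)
The plan is to build on Theorem \ref{teo-identification}, which already gives that $J$ is a linear isomorphism from $X\boxtimes E$ onto $\mathcal{F}((X^\#,\tau_p);E)$; what remains is to check that $J$ is isometric once $X\boxtimes E$ carries the norm $\varepsilon$ and $\mathcal{F}((X^\#,\tau_p);E)$ carries the operator norm. First I would pin down what this operator norm is. Since $|g(x)|=|g(x)-g(0)|\leq\Lip(g)d(x,0)$, norm convergence in $X^\#$ forces pointwise convergence, so $\tau_p$ is coarser than the Lipschitz-norm topology of $X^\#$; consequently every $T\in\mathcal{F}((X^\#,\tau_p);E)$ is in particular bounded as an operator from the Banach space $(X^\#,\Lip)$ to $E$, and its operator norm $\|T\|=\sup\{\|T(g)\|\colon g\in B_{X^\#}\}$ is finite and is the natural norm on this space.

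The core is then a direct unwinding of the definitions. For $u=\sum_{i=1}^n\delta_{(x_i,y_i)}\boxtimes e_i$ I would compute
$$
\|J(u)\|=\sup_{g\in B_{X^\#}}\left\|\sum_{i=1}^n\bigl(g(x_i)-g(y_i)\bigr)e_i\right\|
=\sup_{g\in B_{X^\#}}\sup_{\phi\in B_{E^*}}\left|\sum_{i=1}^n\bigl(g(x_i)-g(y_i)\bigr)\left\langle\phi,e_i\right\rangle\right|,
$$
where the second equality uses the norming formula $\|v\|=\sup_{\phi\in B_{E^*}}|\langle\phi,v\rangle|$ in $E$. The right-hand side is exactly $\varepsilon(u)$ by Definition \ref{def-inj-norm}, so $\|J(u)\|=\varepsilon(u)$ and $J$ is an isometry. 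Together with the bijectivity and linearity from Theorem \ref{teo-identification}, this establishes that $J$ is an isometric isomorphism of $X\boxtimes_{\varepsilon}E$ onto $\mathcal{F}((X^\#,\tau_p);E)$ equipped with the operator norm.

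For the consequence I would invoke the universal property of completions: an isometric isomorphism between two normed spaces extends uniquely to an isometric isomorphism between their completions. Since $\mathcal{F}((X^\#,\tau_p);E)$ sits isometrically inside the Banach space $\mathcal{L}((X^\#,\Lip);E)$ of bounded linear operators, its completion in the operator norm coincides with its closure there. Hence $X\widehat{\boxtimes}_{\varepsilon}E$ is isometrically isomorphic to the closure of $\mathcal{F}((X^\#,\tau_p);E)$ in the operator-norm topology. I do not anticipate a genuine obstacle here: the only steps requiring care are the identification of the relevant operator norm and the interchange of the two suprema, both routine, while the equality $\|J(u)\|=\varepsilon(u)$ is essentially built into the definition of $\varepsilon$.
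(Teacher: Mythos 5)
Your proposal is correct and follows essentially the same route as the paper: both rely on Theorem \ref{teo-identification} for the linear bijectivity and then identify $\left\|J(u)\right\|=\sup_{g\in B_{X^\#}}\left\|J(u)(g)\right\|$ with $\varepsilon(u)$ by norming with functionals $\phi\in B_{E^*}$ and comparing with Definition \ref{def-inj-norm}. The extra remarks you include (that $\tau_p$-continuity implies norm-boundedness, and the completion/closure argument for the consequence) are correct elaborations of steps the paper leaves implicit.
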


\begin{proof}
By Theorem \ref{teo-identification}, $J$ is a linear bijection. If $u=\sum_{i=1}^n \delta_{(x_i,y_i)}\boxtimes e_i\in X\boxtimes E$, we have
\begin{align*} 
\left\|J(u)\right\|
&=\sup\left\{\left\|J(u)(g)\right\|\colon g\in B_{X^\#}\right\}\\
&=\sup\left\{\left|\phi\left(\sum_{i=1}^n\left(g(x_i)-g(y_i)\right)e_i\right)\right|\colon g\in B_{X^\#},\ \phi\in B_{E^*}\right\}\\
&=\sup\left\{\left|\sum_{i=1}^n\left(g(x_i)-g(y_i)\right)\left\langle\phi,e_i\right\rangle\right|\colon g\in B_{X^\#},\ \phi\in B_{E^*}\right\}\\
&=\varepsilon(u).
\end{align*}
The consequence is immediate.
\end{proof}

Let $\F(X)$ be the Lipschitz-free Banach space over a pointed metric space $X$. Let us recall that $\F(X)$ is the closed linear subspace of $(X^\#)^*$ spanned by the set $\{\delta_x\colon x\in X\}$, where for each $x\in X$, $\delta_x$ is the evaluation functional at the point $x$ defined on $X^\#$.
Combining Proposition \ref{prop-identification2} and Corollary \ref{coro-teo-identification}, we can prove that if
$X$ is a pointed metric space, then $X\widehat{\boxtimes}_{\varepsilon}\K$ is isometrically isomorphic to $\F(X)$;
in fact, much more is true.
We show below that the space $X\widehat{\boxtimes}_{\varepsilon}E$ can be identified with the injective Banach-space tensor product $\F(X) \widehat{\otimes}_\varepsilon E$.
First, let us recall some fundamental properties of the space $\F(X)$.

\begin{theorem}\label{thm-Arens-Eells}\cite{ae},\cite[pp. 39-41]{w}
Let $X$, $Y$ be pointed metric spaces, and $E$ a Banach space.
\begin{enumerate}
\item The dual of $\F(X)$ is (canonically) isometrically isomorphic to $X^\#$, with the duality pairing given by $\pair{g}{\delta_x}=g(x)$ for all $g\in X^\#$ and $x \in X$. Moreover, on bounded subsets of $X^\#$, the weak* topology coincides with the topology of pointwise convergence.
\item The map $\iota_X\colon x\mapsto\delta_x$ is an isometric embedding of $X$ into $\F(X)$.
\item For any Lipschitz map $T \colon  X \to Y$ with $T(0)=0$, there is a unique linear map $\tilde{T}\colon\F(X)\to\F(Y)$ such that $\tilde{T}\circ\iota_X=\iota_y\circ T$. Furthermore, $\bign{\tilde{T}}=\Lip(T)$.
\item For any Lipschitz map $T \colon  X \to E$ with $T(0)=0$, there is a unique linear map $\hat{T}\colon\F(X)\to E$ such that $\hat{T}\circ\iota_X=T$. Furthermore, $\bign{\hat{T}}=\Lip(T)$.
\end{enumerate}
\end{theorem}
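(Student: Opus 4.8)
The plan is to establish the four assertions in the order (ii), (i), (iv), and finally (iii), the last being a specialization of the linearization property in (iv). Throughout, $\F(X)$ is a closed subspace of $(X^\#)^*$ carrying the restricted dual norm, so the norm of any element of $\F(X)$ is computed by testing against those $g\in X^\#$ with $\Lip(g)\leq 1$; note also that $g(0)=0$ forces $\delta_0=0$ in $\F(X)$. For (ii), the estimate $|g(x)-g(y)|\leq\Lip(g)\,d(x,y)$ immediately gives $\n{\delta_x-\delta_y}\leq d(x,y)$, while the witness $g(z)=d(z,y)-d(0,y)$, which lies in $X^\#$ with $g(0)=0$, $\Lip(g)\leq 1$ and $g(x)-g(y)=d(x,y)$, yields the reverse inequality; hence $\n{\delta_x-\delta_y}=d(x,y)$, so $\iota_X$ is an isometric embedding (and $\n{\delta_x}=d(x,0)$ on taking $y=0$).

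For (i), I would define $\Phi\colon X^\#\to\F(X)^*$ by $\Phi(g)(\mu)=\mu(g)$, so that $\Phi(g)$ is the restriction to $\F(X)$ of the canonical image of $g$ in $(X^\#)^{**}$. Since the canonical embedding is isometric, $\n{\Phi(g)}\leq\Lip(g)$; evaluating $\Phi(g)$ on the normalized molecules $(\delta_x-\delta_y)/d(x,y)$ and invoking (ii) gives $\n{\Phi(g)}\geq\Lip(g)$, so $\Phi$ is isometric. The substantive point is surjectivity: given $\psi\in\F(X)^*$, set $g(x)=\psi(\delta_x)$; then $g(0)=0$ and $|g(x)-g(y)|=|\psi(\delta_x-\delta_y)|\leq\n{\psi}\,d(x,y)$, so $g\in X^\#$, and $\Phi(g)$ agrees with $\psi$ on every $\delta_x$, hence on their dense span, hence on all of $\F(X)$. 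This identifies $\F(X)^*$ with $X^\#$ under the pairing $\pair{g}{\delta_x}=g(x)$. The ``moreover'' clause follows from the standard fact that on a bounded subset of a dual space the weak* topology is determined by any total family of functionals; here that family is $\{\delta_x\colon x\in X\}$, whose span is dense in $\F(X)$, and evaluation against $\delta_x$ is exactly evaluation at $x$.

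For (iv), given a Lipschitz $T\colon X\to E$ with $T(0)=0$, I would define $\hat T$ on the dense span by $\hat T(\delta_x)=T(x)$ and extend linearly; well-definedness and the norm bound come together from duality. For $\mu=\sum_i a_i\delta_{x_i}$ and $\phi\in E^*$ one computes $\pair{\phi}{\hat T(\mu)}=\sum_i a_i(\phi\circ T)(x_i)=\pair{\phi\circ T}{\mu}$, where $\phi\circ T\in X^\#$ with $\Lip(\phi\circ T)\leq\n{\phi}\Lip(T)$. As $E^*$ separates points and the right-hand side depends only on $\mu$, this shows $\hat T(\mu)$ is independent of the chosen representation, and taking the supremum over $\phi\in B_{E^*}$ gives $\n{\hat T(\mu)}\leq\Lip(T)\,\n{\mu}$ by (i). Completeness of $E$ extends $\hat T$ continuously to $\F(X)$; the identity $\hat T\circ\iota_X=T$ forces $\hat T(\delta_x)=T(x)$, giving uniqueness, and testing on $\delta_x-\delta_y$ with (ii) yields $\n{\hat T}\geq\Lip(T)$, whence $\n{\hat T}=\Lip(T)$.

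Finally, (iii) is the instance of (iv) applied to $\iota_Y\circ T\colon X\to\F(Y)$, which is Lipschitz with $\Lip(\iota_Y\circ T)=\Lip(T)$ by (ii); its linearization $\tilde T=\widehat{\iota_Y\circ T}$ then satisfies $\tilde T\circ\iota_X=\iota_Y\circ T$ and $\n{\tilde T}=\Lip(T)$, and uniqueness is inherited from (iv). I expect the surjectivity step in (i) to be the main obstacle: one must verify both that $x\mapsto\psi(\delta_x)$ actually defines a Lipschitz function and that the resulting $\Phi(g)$ coincides with $\psi$ on the entire closed span rather than merely on the generators. Everything else reduces to routine norm estimates resting on (ii) and the duality established in (i).
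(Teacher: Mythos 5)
Your proof is correct; note, however, that the paper itself gives no proof of this theorem, stating it as a known result with citations to Arens--Eells and to Weaver's book, so there is no in-paper argument to compare against. Your development --- the isometry of $\iota_X$ via the witness $g(z)=d(z,y)-d(0,y)$, the identification $\F(X)^*\cong X^\#$ by restricting the canonical embedding and checking surjectivity on the dense span, the linearization $\hat T$ defined on molecules with well-definedness and the bound $\|\hat T\|\leq\Lip(T)$ obtained by pairing against $\phi\circ T$ for $\phi\in E^*$, and the derivation of (iii) from (iv) applied to $\iota_Y\circ T$ --- is exactly the standard argument found in the cited references, and all the steps (including the reading of ``unique linear map'' as unique continuous linear extension from the dense span) are sound.
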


It is because of the universal properties above that the space $\F(X)$ is called the Lipschitz-free space over $X$, or simply the free space over $X$.
These spaces have been recently used as tools in nonlinear Banach space theory, see \cite{gk,k04} and the survey \cite{glz}.

\begin{proposition}\label{prop-Lipschitz-injective-is-Banach-injective}
The map $I \colon  X\boxtimes_{\varepsilon} E\to \F(X) \otimes_\varepsilon E $, defined by 
$$
I(u) = \sum_{i=1}^n(\delta_{x_i}-\delta_{y_i}) \otimes e_i 
$$
for $u=\sum_{i=1}^n \delta_{(x_i,y_i)}\boxtimes e_i\in X\boxtimes E$, is a linear isometry. As a consequence,  $X\widehat{\boxtimes}_{\varepsilon} E$ is isometrically isomorphic to $\F(X) \widehat{\otimes}_\varepsilon E$.
\end{proposition}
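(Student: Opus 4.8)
The plan is to exploit the isometric duality $\F(X)^*\cong X^\#$ supplied by Theorem \ref{thm-Arens-Eells}(i): under this identification the Banach-space injective tensor norm on $\F(X)\otimes E$ unwinds into exactly the supremum defining the Lipschitz injective norm $\varepsilon$ on $X\boxtimes E$. Recall that for $w=\sum_{i=1}^n z_i\otimes e_i\in\F(X)\otimes E$ one has
$$
\varepsilon(w)=\sup\left\{\left|\sum_{i=1}^n\langle\psi,z_i\rangle\langle\phi,e_i\rangle\right|\colon \psi\in B_{\F(X)^*},\ \phi\in B_{E^*}\right\}.
$$
Since every $\psi\in\F(X)^*$ corresponds \emph{isometrically} to some $g\in X^\#$ with $\langle\psi,\delta_x\rangle=g(x)$, so that $B_{\F(X)^*}$ matches $B_{X^\#}$ and $\langle\psi,\delta_{x_i}-\delta_{y_i}\rangle=g(x_i)-g(y_i)$, applying the formula to $w=I(u)$ yields
$$
\varepsilon(I(u))=\sup\left\{\left|\sum_{i=1}^n(g(x_i)-g(y_i))\langle\phi,e_i\rangle\right|\colon g\in B_{X^\#},\ \phi\in B_{E^*}\right\}=\varepsilon(u)
$$
by Definition \ref{def-inj-norm}. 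This is the heart of the matter, and the remainder of the argument is bookkeeping around it.

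Before invoking this identity I would check that $I$ is well defined and linear. For well-definedness, suppose $\sum_i\delta_{(x_i,y_i)}\boxtimes e_i$ and $\sum_j\delta_{(x'_j,y'_j)}\boxtimes e'_j$ represent the same $u\in X\boxtimes E$; then for every $g\in B_{X^\#}$ and $\phi\in B_{E^*}$ both $\sum_i(g(x_i)-g(y_i))\langle\phi,e_i\rangle$ and $\sum_j(g(x'_j)-g(y'_j))\langle\phi,e'_j\rangle$ equal $(g\boxtimes\phi)(u)$. Computing the injective norm of the difference of the two candidate images by the same formula shows it is zero, so the images coincide since $\varepsilon$ is a norm on $\F(X)\otimes E$; equivalently one may cite Proposition \ref{pro-0} together with the canonical injection $\F(X)\otimes E\hookrightarrow L(\F(X)^*,E)$. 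Linearity is then immediate: homogeneity follows from $\lambda u=\sum_i\delta_{(x_i,y_i)}\boxtimes(\lambda e_i)$, and additivity from concatenating representations as in Lemma \ref{lem-conca}, exactly as in the proof of Theorem \ref{teo-identification}.

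With the norm identity in hand, $I$ is an isometry, hence injective, and $X\boxtimes_\varepsilon E$ is isometrically isomorphic to its image. For the final assertion I would identify this image and prove it dense. Since $\delta_0=0$ in $\F(X)$ (every $g\in X^\#$ vanishes at the base point), taking $y=0$ shows each $\delta_x\otimes e$ lies in the image, so $I(X\boxtimes E)=\lin\{\delta_x\colon x\in X\}\otimes E$. Now $\lin\{\delta_x\colon x\in X\}$ is dense in $\F(X)$ by the very definition of the free space, and because the injective norm is a cross-norm one has $\varepsilon((v-m)\otimes e)=\|v-m\|\,\|e\|$, so each elementary tensor $v\otimes e$ is approximated by $m\otimes e$ with $m\in\lin\{\delta_x\}$; as elementary tensors span a dense subspace of $\F(X)\widehat{\otimes}_\varepsilon E$, the image of $I$ is dense there. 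Therefore $I$ extends to an isometric isomorphism of completions $X\widehat{\boxtimes}_\varepsilon E\cong\F(X)\widehat{\otimes}_\varepsilon E$.

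The genuinely substantive step is the norm identity, and it rests entirely on the \emph{isometric} character of the duality $\F(X)^*\cong X^\#$; once that is granted, matching the two suprema term by term is automatic. The hard part is really only the density argument for the completion, where one must pass from density of $\lin\{\delta_x\}$ in $\F(X)$ to density of $\lin\{\delta_x\}\otimes E$ in $\F(X)\widehat{\otimes}_\varepsilon E$, which is where the cross-norm property of $\varepsilon$ is used.
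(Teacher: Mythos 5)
Your proposal is correct and follows essentially the same route as the paper: both rest on the isometric identification $\F(X)^*\equiv X^\#$ to match the Banach-space injective norm of $I(u)$ with the supremum in Definition \ref{def-inj-norm}, invoke Proposition \ref{pro-0} for well-definedness, and conclude by observing that $I$ has dense range because $\lin\{\delta_x\colon x\in X\}$ is dense in $\F(X)$. Your density argument is merely spelled out in more detail than the paper's.
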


\begin{proof}
Let $u=\sum_{i=1}^n \delta_{(x_i,y_i)}\boxtimes e_i\in X\boxtimes E$. Since $\F(X)^*\equiv X^\#$, note that the norm of $\sum_{i=1}^n(\delta_{x_i}-\delta_{y_i}) \otimes e_i $ in $\F(X) \otimes_\varepsilon E$ is given by
$$
\sup\left\{\left|\sum_{i=1}^n\pair{g}{\delta_{x_i}-\delta_{y_i}} \left\langle\phi,e_i\right\rangle\right|
\colon g\in B_{X^\#}, \; \phi\in B_{E^*}\right\}.
$$
Since $\pair{g}{\delta_{x_i}-\delta_{y_i}}$ is precisely $g(x_i)-g(y_i)$, Proposition \ref{pro-0} shows that $I$ is well defined (and thus linear) and moreover a quick glance at 
Definition \ref{def-inj-norm} shows that $I$ is an isometry.
	
Recall that the linear span of $\{\delta_x\}_{x\in X}$ is dense in $\F(X)$, hence the tensors of the form $\sum_{i=1}^n(\delta_{x_i}-\delta_{y_i})\otimes e_i$, with $x_i,y_i \in X$ and $e_i \in E$, are dense in $\F(X) \widehat{\otimes}_\varepsilon E$.	This shows that the map $I$ has dense range, and thus $X\widehat{\boxtimes}_{\varepsilon} E$ is isometrically isomorphic to $\F(X) \widehat{\otimes}_\varepsilon E$.
\end{proof}

\section{The Lipschitz projective norm}\label{6}

We introduce the Lipschitz projective norm on $X \boxtimes E$.

\begin{definition}\label{def-pro-norm}
For each $u\in X\boxtimes E$, define:
$$
\pi(u)=\inf\left\{\sum_{i=1}^n d(x_i,y_i)\left\|e_i\right\|\colon u=\sum_{i=1}^n\delta_{(x_i,y_i)}\boxtimes e_i\right\},
$$
the infimum being taken over all representations of $u$.
\end{definition}


\begin{theorem}\label{teo-pro-norm}
$\pi$ is a uniform and dualizable Lipschitz cross-norm on $X\boxtimes E$ such that $L\leq\pi$.
\end{theorem}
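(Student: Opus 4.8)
The plan is to verify, in order, that $\pi$ is a norm, that it is a Lipschitz cross-norm, and that it is dualizable and uniform, and finally that $L\leq\pi$. Most of these facts will follow quickly from the defining infimum and from the estimates already established in the excerpt.

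First I would show $\pi$ is a seminorm. The subadditivity $\pi(u+v)\leq\pi(u)+\pi(v)$ is immediate from the concatenation of representations in Lemma \ref{lem-conca}: given representations of $u$ and $v$ nearly achieving their infima, the concatenated representation of $u+v$ has sum of $d(x_i,y_i)\left\|e_i\right\|$ equal to the sum of the two partial sums. The absolute homogeneity $\pi(\lambda u)=\left|\lambda\right|\pi(u)$ follows from the identity $\lambda(\delta_{(x,y)}\boxtimes e)=\delta_{(x,y)}\boxtimes(\lambda e)$, which lets one pass a scalar $\lambda$ into one factor of each representation. To see $\pi$ is actually a norm, I need $\pi(u)=0\Rightarrow u=0$; for this I would establish first that $L\leq\pi$ (which already appears in the statement) together with the fact that $L$ is a norm by Theorem \ref{teo-L}. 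Then $\pi(u)=0$ forces $L(u)=0$, hence $u=0$. So it is cleanest to prove the inequality $L\leq\pi$ early. That inequality is essentially the computation already displayed just before Theorem \ref{teo-L}: for any representation $u=\sum_{i=1}^n\delta_{(x_i,y_i)}\boxtimes e_i$ and any $f\in\Lipo(X,E^*)$ with $\Lip(f)\leq 1$, one has $\left|u(f)\right|\leq\sum_{i=1}^n d(x_i,y_i)\left\|e_i\right\|$; taking the supremum over such $f$ and then the infimum over representations gives $L(u)\leq\pi(u)$.

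Next, for the Lipschitz cross-norm property I must show $\pi(\delta_{(x,y)}\boxtimes e)=d(x,y)\left\|e\right\|$. The inequality $\pi(\delta_{(x,y)}\boxtimes e)\leq d(x,y)\left\|e\right\|$ is trivial using the one-term representation. For the reverse, I would invoke $L\leq\pi$ together with Theorem \ref{teo-L}, where $L$ was already shown to be a Lipschitz cross-norm, giving $d(x,y)\left\|e\right\|=L(\delta_{(x,y)}\boxtimes e)\leq\pi(\delta_{(x,y)}\boxtimes e)$. This is the cleanest route and avoids reconstructing the extremal $f$. For dualizability, given $g\in X^\#$, $\phi\in E^*$ and a representation $u=\sum_{i=1}^n\delta_{(x_i,y_i)}\boxtimes e_i$, I estimate $\left|\sum_{i=1}^n(g(x_i)-g(y_i))\left\langle\phi,e_i\right\rangle\right|\leq\sum_{i=1}^n\Lip(g)d(x_i,y_i)\left\|\phi\right\|\left\|e_i\right\|=\Lip(g)\left\|\phi\right\|\sum_{i=1}^n d(x_i,y_i)\left\|e_i\right\|$, and taking the infimum over representations of the right-hand side yields the dualizability inequality with bound $\Lip(g)\left\|\phi\right\|\pi(u)$.

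For uniformity, let $h\in\Lipo(X,X)$ and $T\in\L(E;E)$. Given any representation $u=\sum_{i=1}^n\delta_{(x_i,y_i)}\boxtimes e_i$, the operator $h\boxtimes T$ sends it to $\sum_{i=1}^n\delta_{(h(x_i),h(y_i))}\boxtimes T(e_i)$, which is one admissible representation of $(h\boxtimes T)(u)$, so $\pi((h\boxtimes T)(u))\leq\sum_{i=1}^n d(h(x_i),h(y_i))\left\|T(e_i)\right\|\leq\sum_{i=1}^n\Lip(h)d(x_i,y_i)\left\|T\right\|\left\|e_i\right\|$; taking the infimum over representations of $u$ gives $\pi((h\boxtimes T)(u))\leq\Lip(h)\left\|T\right\|\pi(u)$, which is exactly uniformity. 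I expect no serious obstacle here: the main subtlety is purely organizational, namely proving $L\leq\pi$ first so that it can be reused both to get positive-definiteness of $\pi$ and the hard direction of the cross-norm identity, rather than re-deriving an extremal Lipschitz function directly.
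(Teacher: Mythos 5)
Your proposal is correct and follows essentially the same route as the paper's own proof: establish $L\leq\pi$ first from the pointwise estimate, use it both for positive-definiteness and for the lower bound in the cross-norm identity, and obtain dualizability and uniformity by estimating on an arbitrary representation and passing to the infimum (the paper just makes explicit, via Lemmas \ref{lem-4} and \ref{lem-7}, that the left-hand sides are representation-independent before taking that infimum). No gaps.
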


\begin{proof}
Let $u\in X\boxtimes E$ and let $\sum_{i=1}^n\delta_{(x_i,y_i)}\boxtimes e_i$ be a representation of $u$. Using Definition \ref{def-L}, we have seen that $L(u)\leq\sum_{i=1}^n d(x_i,y_i)\left\|e_i\right\|$. Since this holds for every representation of $u$, it follows that $L(u)\leq\pi(u)$. 
Suppose that $\pi(u)=0$. Since $L(u)\leq\pi(u)$ and $L$ is a norm on $X\boxtimes E$, then $u=0$.

We check that $\pi(\lambda u)=\left|\lambda\right|\pi(u)$. If $\lambda\in\K$, then  $\lambda u=\sum_{i=1}^n \delta_{(x_i,y_i)}\boxtimes (\lambda e_i)$ and so 
$$
\pi(\lambda u)\leq\sum_{i=1}^n d(x_i,y_i)\left\|\lambda e_i\right\|=\left|\lambda\right|\sum_{i=1}^n d(x_i,y_i)\left\|e_i\right\|.
$$
Since the representation of $u$ is arbitrary, this implies that $\pi(\lambda u)\leq\left|\lambda\right|\pi(u)$. If $\lambda=0$, we have $\pi(\lambda u)=0=\left|\lambda\right|\pi(u)$ since $\pi(u)\geq 0$ for all $u\in X\boxtimes E$. Assume that $\lambda\neq 0$. Similarly, we have $\pi(u)=\pi(\lambda^{-1}(\lambda u))\leq\left|\lambda^{-1}\right|\pi(\lambda u)$, thus $\left|\lambda\right|\pi(u)\leq \pi(\lambda u)$ and hence $\pi(\lambda u)=\left|\lambda\right|\pi(u)$.

We show that $\pi(u+v)\leq\pi(u)+\pi(v)$ for all $u,v\in X\boxtimes E$. Let $\varepsilon>0$. Then there are representations $u=\sum_{i=1}^n \delta_{(x_i,y_i)}\boxtimes e_i$ and $v=\sum_{i=1}^m \delta_{(x'_i,y'_i)}\boxtimes e'_i$ such that $\sum_{i=1}^n d(x_i,y_i)\left\|e_i\right\|<\pi(u)+\varepsilon/2$ and $\sum_{i=1}^m d(x'_i,y'_i)\left\|e'_i\right\|<\pi(v)+\varepsilon/2$. We can concatenate these representations to get a representation $\sum_{i=1}^{n+m} \delta_{(x''_i,y''_i)}\boxtimes e''_i$ for $u+v$ as in Lemma \ref{lem-conca}. By Definition \ref{def-pro-norm}, it follows that 
\begin{align*}
\pi(u+v)&\leq\sum_{i=1}^{n+m} d(x''_i,y''_i)\left\|e''_i\right\|\\
&=\sum_{i=1}^{n}d(x''_i,y''_i)\left\|e''_i\right\|+\sum_{i=n+1}^{n+m}d(x''_i,y''_i)\left\|e''_i\right\|\\
&=\sum_{i=1}^{n}d(x_i,y_i)\left\|e_i\right\|+\sum_{i=n+1}^{n+m}d(x'_{i-n},y'_{i-n})\left\|e'_{i-n}\right\|\\
&=\sum_{i=1}^n d(x_i,y_i)\left\|e_i\right\|+\sum_{i=1}^m d(x'_i,y'_i)\left\|e'_i\right\|\\
&<\pi(u)+\pi(v)+\varepsilon .
\end{align*}
By the arbitrariness of $\varepsilon$, we deduce that $\pi(u+v)\leq\pi(u)+\pi(v)$. Hence $\pi$ is a norm on $X\boxtimes E$. 

We now prove that $\pi$ is a Lipschitz cross-norm. Let $(x,y)\in X^2$ and $e\in E$. It is immediate that $\pi(\delta_{(x,y)}\boxtimes e)\leq d(x,y)\left\|e\right\|$. Conversely, since $L$ is a Lipschitz cross-norm on $X\boxtimes E$ and $L\leq\pi$, it follows that $d(x,y)\left\|e\right\|=L(\delta_{(x,y)}\boxtimes e)\leq\pi(\delta_{(x,y)}\boxtimes e)$.

Let $g\in X^\#$ and $\phi\in E^*$. For any $\sum_{i=1}^n \delta_{(x_i,y_i)}\boxtimes e_i\in X\boxtimes E$, we have
$$
\left|(g\boxtimes\phi)\left(\sum_{i=1}^n \delta_{(x_i,y_i)}\boxtimes e_i\right)\right|
=\left|\sum_{i=1}^n\left(g(x_i)-g(y_i)\right)\left\langle\phi,e_i\right\rangle\right|
\leq\Lip(g)\left\|\phi\right\|\sum_{i=1}^n d(x_i,y_i)\left\|e_i\right\|.
$$
Since the value of $(g\boxtimes\phi)\left(\sum_{i=1}^n \delta_{(x_i,y_i)}\boxtimes e_i\right)$ does not depend on the representation of $\sum_{i=1}^n \delta_{(x_i,y_i)}\boxtimes e_i$ by Lemma \ref{lem-4}, 
it follows that 
$$
\left|(g\boxtimes\phi)\left(\sum_{i=1}^n \delta_{(x_i,y_i)}\boxtimes e_i\right)\right|
\leq\Lip(g)\left\|\phi\right\|\pi\left(\sum_{i=1}^n\delta_{(x_i,y_i)}\boxtimes e_i\right). 
$$
Therefore the Lipschitz cross-norm $\pi$ is dualizable by Proposition \ref{prop-dual} and Remark \ref{prop-dual-3}.

Similarly, by applying Proposition \ref{prop-dual-2} and Remark \ref{prop-dual-3}, we see that the Lipschitz cross-norm $\pi$ is uniform. Let $\sum_{i=1}^n \delta_{(x_i,y_i)}\boxtimes e_i\in X\boxtimes E$. For every $h\in\Lipo(X,X)$ and $T\in\L(E;E)$, we have  
\begin{align*}
\pi\left((h\boxtimes T)\left(\sum_{i=1}^n \delta_{(x_i,y_i)}\boxtimes e_i\right)\right)
&=\pi\left(\sum_{i=1}^n \delta_{(h(x_i),h(y_i))}\boxtimes T(e_i)\right)\\
&\leq\sum_{i=1}^n d(h(x_i),h(y_i))\left\|T(e_i)\right\|\\
&\leq\sum_{i=1}^n\Lip(h)d(x_i,y_i)\left\|T\right\|\left\|e_i\right\|\\
&=\Lip(h)\left\|T\right\|\sum_{i=1}^n d(x_i,y_i)\left\|e_i\right\|.
\end{align*}
The value of $(h\boxtimes T)\left(\sum_{i=1}^n \delta_{(x_i,y_i)}\boxtimes e_i\right)$ is independent of the representation of $\sum_{i=1}^n \delta_{(x_i,y_i)}\boxtimes e_i$ by Lemma \ref{lem-7}, and therefore we conclude that 
$$
\pi\left((h\boxtimes T)\left(\sum_{i=1}^n \delta_{(x_i,y_i)}\boxtimes e_i\right)\right)
\leq\Lip(h)\left\|T\right\|\pi\left(\sum_{i=1}^n \delta_{(x_i,y_i)}\boxtimes e_i\right).
$$
\end{proof}

The Lipschitz projective norm on $X\boxtimes E$ and the dual norm of the norm $\Lip$ of $\Lipo(X,E^*)$ induced on $X\boxtimes E$ coincide as we see next.

\begin{corollary}\label{pi es L}
Let $X$ be a pointed metric space and $E$ a Banach space. Then $\pi=L$ on $X\boxtimes E$.
\end{corollary}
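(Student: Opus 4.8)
The first inequality $L\leq\pi$ is already available to us: it was proved as part of Theorem \ref{teo-pro-norm}, where we showed that $L(u)\leq\sum_{i=1}^n d(x_i,y_i)\left\|e_i\right\|$ for every representation of $u$, whence $L(u)\leq\pi(u)$ by taking the infimum over representations. So the only real work is the reverse inequality $\pi\leq L$.

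For the reverse inequality $\pi(u)\leq L(u)$, my approach would be to exploit the duality established in Section \ref{4}. By Theorem \ref{teo-L}, $L$ is the dual norm induced on $X\boxtimes E$ by the norm $\Lip$ of $\Lipo(X,E^*)$, and by Theorem \ref{theo-dual-classic}, the completion $X\widehat{\boxtimes}_L E$ has dual isometrically isomorphic to $\Lipo(X,E^*)$. The cleanest route is to show that $\pi$ is dominated by $L$ by comparing dual norms, or more directly, by using the fact that any bounded linear functional on $X\boxtimes_\pi E$ of norm at most $1$ corresponds to a function $f\in\Lipo(X,E^*)$ with $\Lip(f)\leq 1$. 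Concretely, I would argue via Hahn--Banach: for a fixed $u$, pick a functional on $X\boxtimes_\pi E$ that norms $u$, identify it (through the dualizability of $\pi$, Theorem \ref{teo-pro-norm}) with an element of $\Lipo(X,E^*)$, and read off that $\pi(u)\leq L(u)$.

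Alternatively, and perhaps more elementarily, I would compute the dual norm of $\pi$ directly. Since $\pi$ is the greatest Lipschitz cross-norm and is dualizable, the dual space $(X\boxtimes_\pi E)^*$ should be isometrically $\Lipo(X,E^*)$ via the map $\Lambda$ of Corollary \ref{linearization}: indeed for $f\in\Lipo(X,E^*)$,
$$
\left|\Lambda(f)(u)\right|
=\left|\sum_{i=1}^n\left\langle f(x_i)-f(y_i),e_i\right\rangle\right|
\leq\Lip(f)\sum_{i=1}^n d(x_i,y_i)\left\|e_i\right\|,
$$
so taking the infimum over representations gives $\left\|\Lambda(f)\right\|_{(X\boxtimes_\pi E)^*}\leq\Lip(f)$, and the reverse is routine using the cross-norm property. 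Once $(X\boxtimes_\pi E)^*\cong\Lipo(X,E^*)$ isometrically, the bidual computation yields, for each $u$,
$$
\pi(u)=\sup\left\{\left|\Lambda(f)(u)\right|\colon f\in\Lipo(X,E^*),\ \Lip(f)\leq 1\right\}=L(u),
$$
the first equality being the fact that $X\boxtimes_\pi E$ embeds isometrically into its bidual (and $\pi$ is the norm on a space whose dual is known), and the second being exactly Definition \ref{def-L}.

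**The main obstacle** I anticipate is justifying the first equality in the displayed chain above, namely that $\pi(u)$ equals the supremum of $\left|\Lambda(f)(u)\right|$ over the unit ball of $\Lipo(X,E^*)$. This is the statement that the norm $\pi$ is recovered by testing against its own dual, which requires knowing that the canonical map into the bidual is isometric on the single vector $u$ — a Hahn--Banach argument — together with the isometric identification $(X\boxtimes_\pi E)^*\cong\Lipo(X,E^*)$. The identification itself must be checked carefully rather than merely asserted, since it underpins everything; once it is in hand, the corollary follows immediately because the supremum defining the bidual norm of $u$ is literally the supremum defining $L(u)$.
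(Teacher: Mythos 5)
Your proposal is correct and takes essentially the same route as the paper: the key step in both is Hahn--Banach together with the observation that a norm-one functional $\eta$ on $X\boxtimes_\pi E$ determines $f\in\Lipo(X,E^*)$ with $\Lip(f)\leq 1$ via $\langle f(x),e\rangle=\eta(\delta_{(x,0)}\boxtimes e)$ and satisfies $\eta(u)=u(f)$, the paper merely packaging this as a separation argument by contradiction rather than your direct bidual computation. One minor correction: that identification rests on the cross-norm property of $\pi$ (so that $|\eta(\delta_{(x,y)}\boxtimes e)|\leq\|\eta\|\,d(x,y)\|e\|$), not on dualizability as your first sketch suggests; your second, spelled-out version uses the right fact.
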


\begin{proof}
By Theorem \ref{teo-pro-norm}, $L\leq\pi$. To prove that $L\geq\pi$, suppose by contradiction that $L(u_0)<1<\pi(u_0)$ for some $u_0\in X\boxtimes E$. Denote $B=\{u\in X\boxtimes E\colon \pi(u)\leq 1\}$. Clearly, $B$ is a closed and convex set in $X\boxtimes_\pi E$. Applying the Hahn--Banach separation theorem to $B$ and $\{u_0\}$, we obtain a functional $\eta\in(X\boxtimes_\pi E)^*$ such that 
$$
1=\|\eta\|=\sup\{\Re\,\eta(u)\colon u\in B \} < \Re\,\eta(u_0).
$$
Define $f\colon X\to E^*$ by $\langle f(x),e\rangle=\eta\left(\delta_{(x,0)}\boxtimes e \right)$ 
for all $e\in E$ and $x\in X$. It is easy to prove that $f$ is well defined and $f\in\Lipo(X,E^*)$ with $\Lip(f)\leq 1$. Moreover
$u(f)=\eta(u)$ for all $u\in X\boxtimes E$. Therefore
$L(u_0)\geq |u_0(f)|\geq \Re\,u_0(f)=\Re\,\eta(u_0)$, so $L(u_0)>1$ and this is a contradiction.
\end{proof}

\begin{theorem}\label{teo-greatest}
$\pi$ is the greatest Lipschitz cross-norm on $X\boxtimes E$.
\end{theorem}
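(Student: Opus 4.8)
The plan is to prove that every Lipschitz cross-norm $\alpha$ on $X\boxtimes E$ satisfies $\alpha\leq\pi$, which is exactly the assertion that $\pi$ is the greatest such norm. This mirrors the classical argument that the projective tensor norm dominates all cross-norms, and it rests only on the norm axioms together with the defining identity of a Lipschitz cross-norm, $\alpha(\delta_{(x,y)}\boxtimes e)=d(x,y)\n{e}$.

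First I would fix an arbitrary Lipschitz cross-norm $\alpha$ on $X\boxtimes E$ and an arbitrary element $u\in X\boxtimes E$. For each representation $u=\sum_{i=1}^n\delta_{(x_i,y_i)}\boxtimes e_i$, I would apply the triangle inequality for the norm $\alpha$ and then the cross-norm property term by term to obtain
$$
\alpha(u)
\leq\sum_{i=1}^n\alpha\left(\delta_{(x_i,y_i)}\boxtimes e_i\right)
=\sum_{i=1}^n d(x_i,y_i)\n{e_i}.
$$
Since this inequality holds for every representation of $u$, taking the infimum over all such representations and invoking Definition \ref{def-pro-norm} yields $\alpha(u)\leq\pi(u)$.

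As $u\in X\boxtimes E$ was arbitrary, this shows $\alpha\leq\pi$, and by Theorem \ref{teo-pro-norm} the norm $\pi$ is itself a Lipschitz cross-norm, so it is the greatest one. There is no genuine obstacle here: the only point requiring care is that $\pi$ is defined as an infimum over representations, but this is precisely what makes the estimate above pass to the infimum. I would not need the dualizability or uniformity of $\alpha$, nor any of the identification theorems; the argument is purely formal once the triangle inequality and the cross-norm normalization are in hand.
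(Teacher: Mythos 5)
Your argument is correct and is essentially identical to the paper's proof: apply the triangle inequality and the cross-norm identity to an arbitrary representation, then pass to the infimum defining $\pi$, and cite Theorem \ref{teo-pro-norm} for the fact that $\pi$ is itself a Lipschitz cross-norm. Nothing further is needed.
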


\begin{proof}
We have seen in Theorem \ref{teo-pro-norm} that $\pi$ is a Lipschitz cross-norm on $X\boxtimes E$. Now, let $\alpha$ be a Lipschitz cross-norm on $X\boxtimes E$ and let $u\in X\boxtimes E$. If $\sum_{i=1}^n \delta_{(x_i,y_i)}\boxtimes e_i$ is a representation of $u$, we have 
$$
\alpha(u)
=\alpha\left(\sum_{i=1}^n \delta_{(x_i,y_i)}\boxtimes e_i\right)
\leq\sum_{i=1}^n\alpha\left(\delta_{(x_i,y_i)}\boxtimes e_i\right)
=\sum_{i=1}^n d(x_i,y_i)\left\|e_i\right\|.
$$
Now the very definition of $\pi$ gives $\alpha(u)\leq\pi(u)$.
\end{proof}

Dualizable Lipschitz cross-norms on $X\boxtimes E$ are characterized by being between the Lipschitz injective and Lipschitz projective norms.

\begin{proposition}\label{prop-carac-dualizable}
A norm $\alpha$ on $X\boxtimes E$ is a dualizable Lipschitz cross-norm if and only if $\varepsilon\leq\alpha\leq\pi$. 
\end{proposition}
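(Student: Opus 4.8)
The plan is to prove the two implications separately, relying almost entirely on the extremal characterizations of $\varepsilon$ and $\pi$ already obtained.

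For the forward implication, I would suppose that $\alpha$ is a dualizable Lipschitz cross-norm. Since every dualizable Lipschitz cross-norm is in particular a Lipschitz cross-norm, Theorem \ref{teo-greatest} immediately yields $\alpha\leq\pi$, as $\pi$ is the greatest Lipschitz cross-norm on $X\boxtimes E$. For the lower estimate, Theorem \ref{teo-least} asserts that $\varepsilon$ is the least dualizable Lipschitz cross-norm, so $\varepsilon\leq\alpha$. Combining the two gives $\varepsilon\leq\alpha\leq\pi$.

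For the converse, I would assume that $\alpha$ is a norm on $X\boxtimes E$ satisfying $\varepsilon\leq\alpha\leq\pi$ and show that it is a dualizable Lipschitz cross-norm. First I would verify that $\alpha$ is a Lipschitz cross-norm by evaluating on an elementary tensor $\delta_{(x,y)}\boxtimes e$: since both $\varepsilon$ and $\pi$ are Lipschitz cross-norms (Theorems \ref{teo-inj-norm} and \ref{teo-pro-norm}), the chain $d(x,y)\|e\|=\varepsilon(\delta_{(x,y)}\boxtimes e)\leq\alpha(\delta_{(x,y)}\boxtimes e)\leq\pi(\delta_{(x,y)}\boxtimes e)=d(x,y)\|e\|$ forces $\alpha(\delta_{(x,y)}\boxtimes e)=d(x,y)\|e\|$. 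Next, to see that $\alpha$ is dualizable, I would fix $g\in X^\#$ and $\phi\in E^*$ and use that $\varepsilon$ is dualizable (Theorem \ref{teo-inj-norm}) together with $\varepsilon\leq\alpha$ to chain the estimates $\left|\sum_{i=1}^n(g(x_i)-g(y_i))\langle\phi,e_i\rangle\right|\leq\Lip(g)\|\phi\|\,\varepsilon\left(\sum_{i=1}^n\delta_{(x_i,y_i)}\boxtimes e_i\right)\leq\Lip(g)\|\phi\|\,\alpha\left(\sum_{i=1}^n\delta_{(x_i,y_i)}\boxtimes e_i\right)$, which is precisely the dualizability condition for $\alpha$.

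I do not expect a serious obstacle here, since both directions reduce to the already-established minimality of $\varepsilon$ among dualizable cross-norms and maximality of $\pi$ among cross-norms. The only point that requires a moment of care is that the hypothesis $\varepsilon\leq\alpha\leq\pi$ does not by itself presuppose that $\alpha$ is a cross-norm; that property must first be extracted from the squeezing argument on elementary tensors before the dualizability inequality can be invoked.
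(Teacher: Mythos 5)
Your proof is correct and follows essentially the same route as the paper: the forward implication is exactly the combination of Theorems \ref{teo-least} and \ref{teo-greatest}, and the converse uses the same squeeze on elementary tensors to get the cross-norm property followed by the chain $\left|\sum_{i}(g(x_i)-g(y_i))\langle\phi,e_i\rangle\right|\leq\Lip(g)\|\phi\|\varepsilon(u)\leq\Lip(g)\|\phi\|\alpha(u)$ for dualizability. Your closing remark about first extracting the cross-norm property before invoking dualizability is a fair observation, and the paper handles it the same way.
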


\begin{proof}
If $\alpha$ is a dualizable Lipschitz cross-norm on $X\boxtimes E$, then $\varepsilon\leq\alpha\leq\pi$ by Theorems \ref{teo-least} and \ref{teo-greatest}. Conversely, if $\alpha$ is a norm on $X\boxtimes E$ that lies between $\varepsilon$ and $\pi$, then $\alpha(\delta_{(x,y)}\boxtimes e)=d(x,y)\left\|e\right\|$ follows immediately from the fact that $\varepsilon$ and $\pi$ are Lipschitz cross-norms. Let $g\in X^\#$ and $\phi\in E^*$. Then 
$$
\left|\sum_{i=1}^n\left(g(x_i)-g(y_i)\right)\left\langle\phi,e_i\right\rangle\right|
\leq\Lip(g)\left\|\phi\right\|\varepsilon\left(\sum_{i=1}^n \delta_{(x_i,y_i)}\boxtimes e_i\right)
\leq\Lip(g)\left\|\phi\right\|\alpha\left(\sum_{i=1}^n \delta_{(x_i,y_i)}\boxtimes e_i\right)
$$
for all $\sum_{i=1}^n \delta_{(x_i,y_i)}\boxtimes e_i\in X\boxtimes E$, and so the Lipschitz cross-norm $\alpha$ is dualizable.
\end{proof}

Let us recall that a completion of a normed space $E$ is a Banach space $\widetilde{E}$ that includes a dense linear subspace isometric to $E$. Every normed space has a completion and the completion is unique up to isometric isomorphism. By \cite[Lemma 3.100]{fhhmz}, every element $\widetilde{e}$ of the completion $\widetilde{E}$ of $E$ can be written as $\widetilde{e}=\sum_{n=1}^\infty e_n$, where $e_n\in E$ and $\sum_{n=1}^\infty\left\|e_n\right\|<\infty$. Moreover, $\left\|\widetilde{e}\right\|=\inf\left\{\sum_{n=1}^\infty\left\|e_n\right\|\right\}$, where the infimum is taken over all series in $E$ summing up to $\widetilde{e}$. 
Combining this with Definition \ref{def-pro-norm}, we obtain the following result.

\begin{theorem}\label{theo-projective Lipschitz tensor product}
Every element $u\in X\widehat{\boxtimes}_\pi E$ admits a representation  
$$
u=\sum_{i=1}^\infty \delta_{(x_i,y_i)}\boxtimes e_i
$$
such that
$$
\sum_{i=1}^\infty d(x_i,y_i)\left\|e_i\right\|<\infty .
$$
Moreover,
$$
\pi(u)=\inf\left\{\sum_{i=1}^\infty d(x_i,y_i)\left\|e_i\right\|\colon u=\sum_{i=1}^\infty\delta_{(x_i,y_i)}\boxtimes e_i,\ \sum_{i=1}^\infty d(x_i,y_i)\left\|e_i\right\|<\infty\right\}.
$$
The dual pairing satisfies the formula
$$
\left\langle\sum_{i=1}^\infty \delta_{(x_i,y_i)}\boxtimes e_i,f\right\rangle=\sum_{i=1}^\infty \left\langle f(x_i)-f(y_i),e_i\right\rangle
$$
for all $f\in\Lipo(X,E^*)$. 
\end{theorem}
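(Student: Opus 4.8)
The plan is to apply the structural description of completions, namely \cite[Lemma 3.100]{fhhmz}, to the normed space $X\boxtimes_\pi E$, whose completion is the Banach space $X\widehat{\boxtimes}_\pi E$. That lemma says that every $u\in X\widehat{\boxtimes}_\pi E$ can be written as $u=\sum_{n=1}^\infty u_n$ with $u_n\in X\boxtimes E$ and $\sum_{n=1}^\infty\pi(u_n)<\infty$, and moreover that $\pi(u)=\inf\{\sum_{n=1}^\infty\pi(u_n)\}$, the infimum running over all such series. Since each $u_n$ is a finite Lipschitz tensor, I would unfold each $u_n$ into a finite representation and concatenate all of them into a single sequence of elementary tensors. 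Throughout I write $\rho(u)$ for the infimum on the right-hand side of the asserted norm formula, and the goal is to prove $\pi(u)=\rho(u)$ together with the existence of a representation attaining finite total variation.

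First I would establish the inequality $\pi(u)\leq\rho(u)$. If $u=\sum_{i=1}^\infty\delta_{(x_i,y_i)}\boxtimes e_i$ with $\sum_{i=1}^\infty d(x_i,y_i)\|e_i\|<\infty$, then since $\pi$ is a Lipschitz cross-norm (Theorem \ref{teo-pro-norm}) one has $\pi(\delta_{(x_i,y_i)}\boxtimes e_i)=d(x_i,y_i)\|e_i\|$, so the series converges absolutely in $X\widehat{\boxtimes}_\pi E$; applying the triangle inequality to the partial sums and passing to the limit by continuity of the norm gives $\pi(u)\leq\sum_{i=1}^\infty d(x_i,y_i)\|e_i\|$. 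Taking the infimum over all such representations yields $\pi(u)\leq\rho(u)$.

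For the reverse inequality $\rho(u)\leq\pi(u)$, which simultaneously produces the desired representation, I would fix $\varepsilon>0$ and start from a decomposition $u=\sum_{n=1}^\infty u_n$ furnished by \cite[Lemma 3.100]{fhhmz} with $\sum_{n=1}^\infty\pi(u_n)<\pi(u)+\varepsilon/2$. For each $n$, using Definition \ref{def-pro-norm} I would choose a finite representation $u_n=\sum_{j}\delta_{(x_{n,j},y_{n,j})}\boxtimes e_{n,j}$ with $\sum_{j}d(x_{n,j},y_{n,j})\|e_{n,j}\|<\pi(u_n)+\varepsilon/2^{n+1}$. Concatenating all of these finite families into one sequence produces a series of elementary tensors whose total variation is bounded by $\sum_{n=1}^\infty\pi(u_n)+\varepsilon/2<\pi(u)+\varepsilon$; by the cross-norm identity this series converges absolutely in $X\widehat{\boxtimes}_\pi E$, and since an absolutely convergent series in a Banach space may be regrouped without altering its sum, grouping the terms belonging to each $u_n$ recovers $\sum_{n=1}^\infty u_n=u$. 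Thus the concatenated series represents $u$ and witnesses $\rho(u)<\pi(u)+\varepsilon$; letting $\varepsilon\to 0$ gives $\rho(u)\leq\pi(u)$ and completes the norm formula.

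Finally, for the dual pairing I would invoke Theorem \ref{theo-dual-classic} together with Corollary \ref{pi es L} (which gives $L=\pi$), so that each $f\in\Lipo(X,E^*)$ induces a continuous functional on $X\widehat{\boxtimes}_\pi E$ acting on finite tensors by the formula of Theorem \ref{theo-dual-pair}. Applying its continuity to the convergent series $u=\sum_{i=1}^\infty\delta_{(x_i,y_i)}\boxtimes e_i$, and using the bound $|\langle f(x_i)-f(y_i),e_i\rangle|\leq\Lip(f)\,d(x_i,y_i)\|e_i\|$ to guarantee convergence of the scalar series, I obtain $\langle u,f\rangle=\sum_{i=1}^\infty\langle f(x_i)-f(y_i),e_i\rangle$. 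I expect the main obstacle to be purely organizational: carefully justifying that the single-indexed concatenated series converges to $u$ and may be regrouped, which rests entirely on the absolute convergence secured by the cross-norm identity $\pi(\delta_{(x,y)}\boxtimes e)=d(x,y)\|e\|$.
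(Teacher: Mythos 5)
Your proposal is correct and follows essentially the same route as the paper: both arguments rest on \cite[Lemma 3.100]{fhhmz}, near-optimal finite representations of each block $u_n$, concatenation together with absolute convergence (via the cross-norm identity) to recover $u$ as the sum of the single-indexed series, and continuity of the functional induced by $f$ together with $L=\pi$ for the pairing formula. The only difference is organizational: the paper re-runs the proof of that lemma so as to interleave the choice of the blocks $u_n$ with the choice of their finite representations, whereas you invoke the lemma as a black box and refine each block afterwards, which is equally valid.
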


\begin{proof}
We follow the proof of \cite[Lemma 3.100]{fhhmz}. 
Obviously, $\overline{B_{X\boxtimes_\pi E}}=B_{X\widehat{\boxtimes}_\pi E}$. 
We may assume, without loss of generality, that $u\in B_{X\widehat{\boxtimes}_\pi E}$. 
Fix $\varepsilon >0$. There exists $u_1\in B_{X\boxtimes_\pi E}$ such that $\pi(u-u_1)\leq \varepsilon$. 
Moreover, by Definition \ref{def-pro-norm}, we can take a representation of $u_1$, 
$\sum_{i=1}^{m_1}\delta_{(x_i,y_i)}\boxtimes e_i$, such that
$$
\sum_{i=1}^{m_1} d(x_i,y_i) \|e_i\| <\pi(u_1)+\varepsilon.
$$
Since $(u-u_1)/\varepsilon \in B_{X\widehat{\boxtimes}_\pi E}$, there is 
$u_2\in \varepsilon B_{X\boxtimes_\pi E}$ such that $\pi((u-u_1)/\varepsilon-u_2/\varepsilon)\leq 1/2$, that is,
$\pi(u-u_1-u_2)\leq \varepsilon/2$. 
As before, by Definition \ref{def-pro-norm}, we can take a representation of $u_2$, 
$\sum_{i=m_1+1}^{m_2}\delta_{(x_i,y_i)}\boxtimes e_i$, such that
$$
\sum_{i=m_1+1}^{m_2} d(x_i,y_i) \|e_i\| <\pi(u_2)+\frac{\varepsilon}{2}.
$$
Find $u_3\in (\varepsilon/2) B_{X\boxtimes_\pi E}$ such that 
$\pi((2/\varepsilon)(u-u_1-u_2)-(2/\varepsilon)u_3)\leq 1/2$, i. e.,
$\pi(u-u_1-u_2-u_3)\leq \varepsilon/2^2$, and take a representation of $u_3$, 
$\sum_{i=m_2+1}^{m_3}\delta_{(x_i,y_i)}\boxtimes e_i$, such that
$$
\sum_{i=m_2+1}^{m_3} d(x_i,y_i) \|e_i\| <\pi(u_3)+\frac{\varepsilon}{2^2}.
$$
Proceed recursively to obtain sequences $\{u_n\}$ in $X\boxtimes_\pi E$, $\{m_n\}$ in $\mathbb{N}$,
$\{x_n\},\{y_n\}$ in $X$ and $\{e_n\}$ in $E$ verifying that $u_n\in \left(\varepsilon/2^{n-2}\right) B_{X\boxtimes_\pi E}$,
$u_n=\sum_{i=m_{n-1}+1}^{m_n} \delta_{(x_i,y_i)}\boxtimes e_i$ and
$$
\pi\left( u-\sum_{j=1}^n u_j\right)\leq \frac{\varepsilon}{2^{n-1}},\qquad
\sum_{i=m_{n-1}+1}^{m_n}  d(x_i,y_i) \|e_i\| <\pi(u_n)+\frac{\varepsilon}{2^{n-1}}
$$
for all $n\in\mathbb{N}$, $n\geq 2$. Clearly, $u=\sum_{n=1}^\infty u_n$. Furthermore, from 
$\pi(u_1)-\pi(u)\leq \varepsilon$, it follows that
$$
\sum_{n=1}^\infty \pi(u_n) \leq \pi(u)+\varepsilon+\varepsilon\sum_{n=2}^\infty\frac{1}{2^{n-2}} =\pi(u)+3\varepsilon.
$$
Note that the sequence $\{m_n\}$ is strictly increasing and
$$
\sum_{i=1}^n d(x_i,y_i)\|e_i\| \leq \sum_{i=1}^{m_n} d(x_i,y_i)\|e_i\| <
\sum_{j=1}^n \pi(u_j)+\sum_{j=1}^n \frac{\varepsilon}{2^{j-1}} \leq 
\pi(u)+3\varepsilon+ 2\varepsilon=\pi(u)+5\varepsilon
$$
for all $n\in\mathbb{N}$. Then the series $\sum_{i\geq 1} \delta_{(x_i,y_i)}\boxtimes e_i$ 
is absolutely convergent. We calculate its sum. Denote, for each $n\in \mathbb{N}$, 
$S_n=\sum_{i=1}^n \delta_{(x_i,y_i)}\boxtimes e_i$. We have that $\{S_{m_n}\}$ is a partial sequence of 
$\{S_n\}$ with $S_{m_n}=\sum_{j=1}^n u_j$ for all $n\in \mathbb{N}$. Since $\{S_{m_n}\}$ converges to
$\sum_{n=1}^\infty u_n=u$, then $\{S_n\}$ converges to $u$, that is, 
$\sum_{i=1}^\infty \delta_{(x_i,y_i)}\boxtimes e_i=u$.
Finally, from the inequality
$$
\sum_{i=1}^\infty d(x_i,y_i)\|e_i\| \leq \pi(u)+5\varepsilon
$$
and the arbitrariness of $\varepsilon$, we obtain that 
$$
\inf\left\{\sum_{i=1}^\infty d(x_i,y_i)\left\|e_i\right\|\colon 
u=\sum_{i=1}^\infty\delta_{(x_i,y_i)}\boxtimes e_i,\ \sum_{i=1}^\infty d(x_i,y_i)\left\|e_i\right\|<\infty\right\}
\leq \pi(u).
$$
The opposite inequality is obvious.

To check the dual pairing formula, consider a representation of $u$,
$\sum_{i=1}^\infty \delta_{(x_i,y_i)}\boxtimes e_i$. Given $f\in\Lipo(X,E^*)$, we must see that
the series $\sum_{i\geq 1} \langle f(x_i)-f(y_i), e_i\rangle$ converges to $u(f)$. Denote, for each
$i\in \mathbb{N}$, $u_i=\sum_{j=1}^i \delta_{(x_j,y_j)}\boxtimes e_j \in X\boxtimes_\pi E$. Since
$\sum_{j=1}^i \langle f(x_j)-f(y_j), e_j\rangle=u_i(f)$, we have that
$$
\left| u(f)-\sum_{j=1}^i \langle f(x_j)-f(y_j), e_j\rangle \right|=
|u(f)-u_i(f)|\leq L(u-u_i) \Lip(f)=   \pi(u-u_i) \Lip(f).
$$
Taking into account that $\{\pi(u-u_i)\}$ converges to $0$, we obtain 
$\sum_{i=1}^\infty \langle f(x_i)-f(y_i), e_i\rangle= u(f)$.
\end{proof}

Next we consider the boundedness of the linear operator $h\boxtimes T\colon X\boxtimes E\to Y\boxtimes F$ for the Lipschitz projective norms.

\begin{proposition}
Let $X,Y$ be pointed metric spaces and $E,F$ Banach spaces. Let $h\in\Lipo(X,Y)$ and $T\in\L(E;F)$. Then there exists a unique bounded linear operator $h\boxtimes_\pi T\colon X\widehat{\boxtimes}_\pi E\to Y\widehat{\boxtimes}_\pi F$ such that $(h\boxtimes_\pi T)(u)=(h\boxtimes T)(u)$ for all $u\in X\boxtimes E$. Furthermore, $\left\|h\boxtimes_\pi T\right\|=\Lip(h)\left\|T\right\|$.
\end{proposition}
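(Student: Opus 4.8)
The plan is to establish that the linear operator $h\boxtimes T\colon X\boxtimes E\to Y\boxtimes F$ produced in Definition \ref{def-Lipschitz tensor product operator} and Lemma \ref{lem-7} is bounded as a map from $X\boxtimes_\pi E$ to $Y\boxtimes_\pi F$, with operator norm exactly $\Lip(h)\n{T}$, and then to invoke the standard extension of a bounded linear map from a dense subspace into a Banach space. Since $X\boxtimes_\pi E$ is dense in its completion $X\widehat{\boxtimes}_\pi E$ and the target $Y\widehat{\boxtimes}_\pi F$ is complete, such a bounded $h\boxtimes T$ will admit a unique continuous extension $h\boxtimes_\pi T\colon X\widehat{\boxtimes}_\pi E\to Y\widehat{\boxtimes}_\pi F$ that agrees with $h\boxtimes T$ on $X\boxtimes E$ and carries the same norm; this simultaneously yields existence, uniqueness, and the norm identity.

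For the upper bound I would fix $u=\sum_{i=1}^n\delta_{(x_i,y_i)}\boxtimes e_i\in X\boxtimes E$ and, using Definition \ref{def-pro-norm} for $\pi$ on $Y\boxtimes F$ together with the estimates defining $\Lip(h)$ and $\n{T}$, compute
\[
\pi\big((h\boxtimes T)(u)\big)=\pi\left(\sum_{i=1}^n\delta_{(h(x_i),h(y_i))}\boxtimes T(e_i)\right)\leq\sum_{i=1}^n d(h(x_i),h(y_i))\n{T(e_i)}\leq\Lip(h)\n{T}\sum_{i=1}^n d(x_i,y_i)\n{e_i}.
\]
This is essentially the computation already performed in the proof of Theorem \ref{teo-pro-norm}, the difference being that the two Lipschitz tensor products now live over distinct spaces, so I cannot appeal to the uniformity of $\pi$ (which, per Definition \ref{def-cross-norm}, concerns only endomorphisms) and must instead rely directly on $\pi$ being a Lipschitz cross-norm on $Y\boxtimes F$. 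As the bound holds for every representation of $u$, passing to the infimum gives $\pi((h\boxtimes T)(u))\leq\Lip(h)\n{T}\,\pi(u)$, whence $\n{h\boxtimes T}\leq\Lip(h)\n{T}$.

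For the reverse inequality I would test $h\boxtimes T$ on elementary tensors. For $x\neq y$ in $X$ and $e\neq 0$ in $E$, using that $\pi$ is a cross-norm on both spaces (Theorem \ref{teo-pro-norm}),
\[
\pi\big((h\boxtimes T)(\delta_{(x,y)}\boxtimes e)\big)=\pi\big(\delta_{(h(x),h(y))}\boxtimes T(e)\big)=d(h(x),h(y))\n{T(e)},\qquad \pi(\delta_{(x,y)}\boxtimes e)=d(x,y)\n{e}.
\]
Taking the supremum of the ratio over $x\neq y$ and $e\neq 0$, the quotient splits into the product of two independent suprema, one purely metric and one purely linear, which equal $\Lip(h)$ and $\n{T}$ respectively by definition; hence $\n{h\boxtimes T}\geq\Lip(h)\n{T}$, and equality follows. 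I expect no serious obstacle here: the only point requiring care is precisely the observation that uniformity does not transfer across distinct spaces, so that appeal must be replaced by the direct cross-norm computation on $Y\boxtimes F$ and by the factorization of the elementary-tensor norm ratio into a metric factor and a linear factor, each of which can be pushed to its supremum independently.
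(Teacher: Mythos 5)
Your proposal is correct and follows essentially the same route as the paper: the upper bound via the cross-norm estimate on an arbitrary representation followed by the infimum, the lower bound by testing on elementary tensors and using that $\pi$ is a Lipschitz cross-norm on both $X\boxtimes E$ and $Y\boxtimes F$, and then the standard unique bounded extension to the completion. Your explicit remark that the uniformity of $\pi$ from Theorem \ref{teo-pro-norm} cannot be invoked because it is stated only for endomorphisms, so the cross-norm computation must be redone across distinct spaces, is exactly the (implicit) reason the paper also carries out the computation directly.
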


\begin{proof}
Let $u\in X\boxtimes E$ and let $\sum_{i=1}^n \delta_{(x_i,y_i)}\boxtimes e_i$ be a representation of $u$. We have 
\begin{align*}
\pi((h\boxtimes T)(u))
&=\pi\left(\sum_{i=1}^n \delta_{(h(x_i),h(y_i))}\boxtimes T(e_i)\right)\\
&\leq\sum_{i=1}^n d(h(x_i),h(y_i))\left\|T(e_i)\right\|\\
&\leq\Lip(h)\left\|T\right\|\sum_{i=1}^n d(x_i,y_i)\left\|e_i\right\|.
\end{align*}
An appeal to Definition \ref{def-pro-norm} yields $\pi((h\boxtimes T)(u))\leq\Lip(h)\left\|T\right\|\pi(u)$. Therefore $h\boxtimes T$ is bounded from $X\boxtimes_\pi E$ to $Y\boxtimes_\pi F$ and $\left\|h\boxtimes T\right\|\leq\Lip(h)\left\|T\right\|$. Moreover, the converse estimate follows easily from
$$
d(h(x),h(y))\left\|T(e)\right\|
=\pi(\delta_{(h(x),h(y))}\boxtimes T(e))
=\pi((h\boxtimes T)(\delta_{(x,y)}\boxtimes e))
\leq \left\|h\boxtimes T\right\|d(x,y)\left\|e\right\|
$$
for all $x,y\in X$ and $e\in E$. Thus, we have $\left\|h\boxtimes T\right\|=\Lip(h)\left\|T\right\|$. Finally, it is well known that $h\boxtimes T$ has a unique bounded linear extension to an operator $h\boxtimes_\pi T\colon X\widehat{\boxtimes}_\pi E\to Y\widehat{\boxtimes}_\pi F$ with $\left\|h\boxtimes_\pi T\right\|=\Lip(h)\left\|T\right\|$.
\end{proof}

It turns out that there is a very close relationship between the Lipschitz projective norm and the projective tensor product of Banach spaces.
In fact, just as it was the case for the injective norm in Proposition \ref{prop-Lipschitz-injective-is-Banach-injective}, the Lipschitz projective norm on $X \boxtimes E$ can be identified with the projective norm on the tensor product of $\F(X)$ and $E$.
The authors wish to thank Richard Haydon for suggesting that this might be true.

\begin{proposition}\label{prop-Lipschitz-projective-is-Banach-projective}
The map $I \colon  X\boxtimes_{\pi} E \to \F(X) \otimes_\pi E$, defined by 
$$
I(u) = \sum_{i=1}^n(\delta_{x_i}-\delta_{y_i}) \otimes e_i
$$
for $u=\sum_{i=1}^n \delta_{(x_i,y_i)}\boxtimes e_i\in X\boxtimes E$, is a linear isometry. Moreover,  $X\widehat{\boxtimes}_{\pi} E$ is isometrically isomorphic to $\F(X) \widehat{\otimes}_\pi E$.
\end{proposition}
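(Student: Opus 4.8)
The plan is to follow the blueprint of Proposition~\ref{prop-Lipschitz-injective-is-Banach-injective}, with one essential change: since the projective norm is given by an \emph{infimum} over representations rather than a supremum, I would establish the isometry by a duality argument instead of by comparing representations term by term.

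First I would settle that $I$ is well defined, linear and injective, exactly as in the injective case. Recall that $\F(X)^*$ is isometrically $X^\#$ (Theorem~\ref{thm-Arens-Eells}(i)), so $X^\#$ separates the points of $\F(X)$, and hence the algebraic tensor product $\F(X)\otimes E$ embeds into $L(X^\#;E)$ via $\sum_i\mu_i\otimes e_i\mapsto(g\mapsto\sum_i\pair{g}{\mu_i}e_i)$. Since $\pair{g}{\delta_{x_i}-\delta_{y_i}}=g(x_i)-g(y_i)$, the element $\sum_i(\delta_{x_i}-\delta_{y_i})\otimes e_i$ vanishes in $\F(X)\otimes E$ precisely when $\sum_i(g(x_i)-g(y_i))e_i=0$ for every $g\in X^\#$; by Proposition~\ref{pro-0} this happens exactly when $\sum_i\delta_{(x_i,y_i)}\boxtimes e_i=0$ in $X\boxtimes E$. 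This simultaneously yields that $I$ is well defined (independent of the representation of $u$), linear, and injective.

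The heart of the matter is the isometry $\pi(u)=\pi_\otimes(I(u))$, where $\pi_\otimes$ denotes the projective tensor norm on $\F(X)\otimes E$. Here I would argue by duality. For any normed space $Z$ and any $z\in Z$ one has $\n{z}=\sup\{|\Phi(z)|\colon \Phi\in B_{Z^*}\}$, and the dual of $\F(X)\otimes_\pi E$ coincides with the dual of its completion, which by the standard theory of the projective tensor product is isometrically $\L(\F(X);E^*)$. Composing with the universal property of the free space (Theorem~\ref{thm-Arens-Eells}(iv), applied with $E^*$ in place of $E$) identifies $\L(\F(X);E^*)$ isometrically with $\Lipo(X,E^*)$: a map $f\in\Lipo(X,E^*)$ corresponds to the functional $\Phi_f$ determined by $\Phi_f((\delta_x-\delta_y)\otimes e)=\pair{f(x)-f(y)}{e}$, with $\left\|\Phi_f\right\|=\Lip(f)$, so that the dual unit ball corresponds to $\{f\colon\Lip(f)\le 1\}$. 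Evaluating on $I(u)$ gives $\Phi_f(I(u))=\sum_{i=1}^n\pair{f(x_i)-f(y_i)}{e_i}=u(f)$ by Lemma~\ref{lem-2}, whence
$$
\pi_\otimes(I(u))=\sup\{|u(f)|\colon f\in\Lipo(X,E^*),\ \Lip(f)\le 1\}=L(u)=\pi(u),
$$
the last two equalities coming from Definition~\ref{def-L} together with Theorem~\ref{teo-L}, and from Corollary~\ref{pi es L}. Thus $I$ is an isometry.

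Finally, for the completion statement I would invoke density. Since the base point satisfies $\delta_0=0$ in $\F(X)$, every $\delta_z$ equals $\delta_z-\delta_0$, so the range of $I$ contains all tensors $\sum_i(\delta_{a_i}-\delta_{b_i})\otimes f_i$; as the linear span of $\{\delta_x\colon x\in X\}$ is dense in $\F(X)$, such tensors are dense in $\F(X)\widehat{\otimes}_\pi E$. Hence $I$ is an isometry with dense range and extends to an isometric isomorphism $X\widehat{\boxtimes}_\pi E\cong\F(X)\widehat{\otimes}_\pi E$. The main obstacle, and the reason the injective proof cannot be transcribed verbatim, is precisely the lower bound $\pi(u)\le\pi_\otimes(I(u))$: a representation of $I(u)$ in $\F(X)\otimes E$ uses arbitrary elements of $\F(X)$, which need not be of the form $\delta_x-\delta_y$, so it does not pull back termwise to a representation of $u$; the duality computation above is what sidesteps this difficulty. (An alternative route would decompose each $\F(X)$-factor into molecules $(\delta_x-\delta_y)/d(x,y)$ and pass to absolutely convergent series via Theorem~\ref{theo-projective Lipschitz tensor product}, at the cost of considerably more bookkeeping.)
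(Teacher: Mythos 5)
Your proof is correct and follows essentially the same route as the paper's: both hinge on the identification $(\F(X)\widehat{\otimes}_\pi E)^*\cong\L(\F(X);E^*)\cong\Lipo(X,E^*)$ via Theorem \ref{thm-Arens-Eells}, combined with Corollary \ref{pi es L} ($\pi=L$), and the same density argument for the completion. The only (harmless) cosmetic difference is that you extract the easy inequality $\pi_\otimes(I(u))\le\pi(u)$ from the duality formula as well, whereas the paper obtains it directly from the triangle inequality and the fact that $\n{\delta_x-\delta_y}_{\F(X)}=d(x,y)$.
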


\begin{proof}
As in the proof of Proposition \ref{prop-Lipschitz-injective-is-Banach-injective}, Proposition \ref{pro-0} guarantees that the map $I$ is well defined (and it is clearly linear).
Letting $u=\sum_{i=1}^n \delta_{(x_i,y_i)}\boxtimes e_i\in X\boxtimes E$, note that using the fact that the map $x \mapsto \delta_x$ is an isometry from $X$ into $\F(X)$,
$$
\n{ \sum_{i=1}^n(\delta_{x_i}-\delta_{y_i}) \otimes e_i }_{\F(X) \otimes_\pi E}
\le \sum_{i=1}^n \n{\delta_{x_i}-\delta_{y_i}}_{\F(X)} \n{e_i}
= \sum_{i=1}^n d(x_i,y_i) \n{e_i}.
$$
Taking the infimum over all representations of $u$, we conclude that $\n{I(u)} \le \pi(u)$.
Let $\eta>0$ be given. From Corollary \ref{pi es L} there exists $f\in\Lipo(X,E^*)$ with $\Lip(f)\leq 1$ such that $\left|\left\langle u,f\right\rangle\right| > \pi(u)-\eta$, where the pairing is the one given in Theorem \ref{theo-projective Lipschitz tensor product}.
By Theorem \ref{thm-Arens-Eells}, the linear extension $\hat{f} \colon  \F(X) \to E^*$ of $f$ has norm at most one. From the properties of the projective tensor product of Banach spaces,
the dual of $\F(X) \widehat{\otimes}_\pi E$ can be identified with $\mathcal{L}(\F(X),E^*)$, where the pairing is given by
$$
\Bpair{\sum_{i=1}^n \gamma_i \otimes e_i}{T}=\sum_{i=1}^n \pair{T\gamma_i}{e_i} \qquad \left(\gamma_i \in \F(X),\; e_i \in E,\; T\in\mathcal{L}(\F(X),E^*)\right).
$$
In particular,
$$
\left\|I(u)\right\|\ge\left|\left\langle u,\hat{f}\right\rangle\right|
=\left|\sum_{i=1}^n \pair{\hat{f}(\delta_{x_i}-\delta_{y_i})}{e_i} \right|
=\left|\sum_{i=1}^n \pair{f(x_i)-f(y_i)}{e_i}\right|
=\left|\left\langle u,f\right\rangle\right|> \pi(u)-\eta.
$$
Letting $\eta$ go to 0, we obtain that $\n{I(u)} \ge \pi(u)$, and thus $I$ is a linear isometry. Since the sums of the form $\sum_{i=1}^n \delta_{(x_i,y_i)}\boxtimes e_i$ and $\sum_{i=1}^n(\delta_{x_i}-\delta_{y_i}) \otimes e_i$ are dense respectively in $X\widehat{\boxtimes}_{\pi} E$ and $\F(X) \widehat{\otimes}_\pi E$, $I$ extends to an isometric isomorphism between $X\widehat{\boxtimes}_{\pi} E$ and $\F(X) \widehat{\otimes}_\pi E$.
\end{proof}

The identification obtained in Proposition  \ref{prop-Lipschitz-projective-is-Banach-projective}
invites to point out the following relationship between Lipschitz tensor product operators and 
usual tensor product operators.

\begin{remark}\label{oper_Lipschitz_project_Banach_project}
Let $X,Y$ be pointed metric spaces and $E,F$ be Banach spaces. 
Let $h\in \Lipo(X,Y)$, \mbox{$T\in \mathcal{L}(E,F)$} and $\widetilde{h}\colon \F(X)\to\F(Y)$ be the induced map given 
in theorem 7.5. Then the diagram

\begin{center}
\begin{picture}(140,82)
\put(13,60){$X\widehat{\boxtimes}_\pi E$}
\put(41,62){\vector(1,0){44}}
\put(50,67){$h\boxtimes_\pi T$}
\put(89,60){$Y\widehat{\boxtimes}_\pi F$}
\put(23,52){\vector(0,-1){40}}
\put(3,0){$\F(X)\widehat{\otimes}_\pi E$}
\put(0,30){$I_{X\widehat{\boxtimes}_\pi E}$}
\put(103,30){$I_{Y\widehat{\boxtimes}_\pi F}$}
\put(44,2){\vector(1,0){40}}
\put(87,0){$\F(Y)\widehat{\otimes}_\pi F$}
\put(50,7){$\widetilde{h}\otimes_\pi T$}
\put(100,52){\vector(0,-1){40}}
\end{picture}
\end{center}
\medskip
conmutes, that is, 
$I_{Y\widehat{\boxtimes}_\pi F}\circ (h\boxtimes_\pi T)=
\left(\widetilde{h}\otimes_\pi T\right)\circ I_{X\widehat{\boxtimes}_\pi E}$.  
\end{remark}

Proposition \ref{prop-Lipschitz-projective-is-Banach-projective} implies in particular that given a pointed metric space $X$, there is a Banach space $A$ such that $X \widehat{\boxtimes}_\pi E$ is isometric to $A\widehat{\otimes}_\pi E$ for every Banach space $E$.
The authors would like to thank Jes\'{u}s Castillo for pointing out a result in categorical Banach space theory that shows this was to be expected. Without going into all the details, let us outline the argument.
First, a theorem of Fuks \cite[Section 6]{Fuks} (a nice presentation can be found in \cite[Proposition 5.6]{Castillo-hitchhiker}, where the reader can also find the definitions of the categorical terms we use below) states the following:
if $\mathfrak{F}$, $\mathfrak{G}$ are two covariant Banach functors such that for any Banach spaces $E$ and $F$, we have that
$\mathcal{L}(\mathfrak{F}(E),F)$ is linearly isometric to $\mathcal{L}(E,\mathfrak{G}(F))$,
then there exists a Banach space $A$ such that for every Banach space $E$,
$\mathfrak{F}(E)$ is linearly isometric to $A \widehat{\otimes}_\pi E$ and $\mathfrak{G}(F)$ is linearly isometric to $\mathcal{L}(A,F)$.
Now consider a fixed pointed metric space $X$. Note that it induces two covariant Banach functors $X \widehat{\boxtimes}_\pi (\cdot)$ and $\Lipo(X,\cdot)$.
Arguments closely related to those that led us to prove Corollary \ref{pi es L} show that, for any Banach spaces $E$ and $F$, we have
$\mathcal{L}(X\widehat{\boxtimes}_\pi E, F)$ is linearly isometric to $\mathcal{L}(E,\Lipo(X,F))$,
so Fuks' result applies.

The space $X\widehat{\boxtimes}_\pi E$ is called the projective Lipschitz tensor product of $X$ and $E$. This term derives from the following result.
Before stating it, recall that a Lipschitz map $f \colon  X \to Z$ is called $C$-co-Lipschitz if for every $x \in X$ and $r>0$, $f(B(x,r))\supset B(f(x),r/C)$. Moreover, it is called a Lipschitz quotient if it is surjective, Lipschitz and co-Lipschitz \cite{bjlps}. 

\begin{theorem}
Let $X, Z$ be pointed metric spaces and $q\colon  X \to Z$ a Lipschitz quotient that is  
$1$-Lipschitz  and $C$-co-Lipschitz for every $C>1$. Let $E,F$ be a Banach spaces and $Q\colon E\to  F$ a quotient operator. Then $q \boxtimes_\pi Q\colon X\widehat{\boxtimes}_\pi E\to Z\widehat{\boxtimes}_\pi F$ is a quotient operator.
\end{theorem}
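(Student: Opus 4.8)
The plan is to show that $q\boxtimes_\pi Q$ is a metric surjection, that is, that it maps the open unit ball of $X\widehat{\boxtimes}_\pi E$ onto the open unit ball of $Z\widehat{\boxtimes}_\pi F$. A quotient operator has norm at most one, so $\|Q\|\le 1$, and the boundedness of Lipschitz tensor product operators for the projective norm established above gives $\|q\boxtimes_\pi Q\|=\Lip(q)\|Q\|=\|Q\|\le 1$. Hence $q\boxtimes_\pi Q$ already maps the open unit ball into the open unit ball, and only the reverse inclusion requires work. So I would fix $w\in Z\widehat{\boxtimes}_\pi F$ with $\pi(w)<1$ and produce $u\in X\widehat{\boxtimes}_\pi E$ with $\pi(u)<1$ and $(q\boxtimes_\pi Q)(u)=w$. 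By Theorem \ref{theo-projective Lipschitz tensor product} one may write $w=\sum_{i=1}^\infty \delta_{(z_i,z_i')}\boxtimes f_i$ with $s:=\sum_{i=1}^\infty d(z_i,z_i')\|f_i\|<1$, and I would fix a single $\theta>0$ small enough that $(1+\theta)^2 s<1$.

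The heart of the argument is a term-by-term lifting. Discarding the (zero) terms with $z_i=z_i'$, I treat each remaining index $i$ as follows. Since $q$ is surjective, choose $x_i\in X$ with $q(x_i)=z_i$. Because $q$ is $C$-co-Lipschitz for the particular value $C=1+\theta$, one has $q(B(x_i,r))\supseteq B(z_i,r/C)$ for every $r>0$; taking any $r$ with $C\,d(z_i,z_i')<r$ places $z_i'$ in $B(z_i,r/C)$, so there is $x_i'\in B(x_i,r)$ with $q(x_i')=z_i'$, and letting $r$ decrease to $C\,d(z_i,z_i')$ one arranges $d(x_i,x_i')\le(1+\theta)\,d(z_i,z_i')$. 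Since $Q$ is a metric surjection, choose $e_i\in E$ with $Q(e_i)=f_i$ and $\|e_i\|\le(1+\theta)\|f_i\|$ (take $e_i=0$ when $f_i=0$). Then $\sum_i d(x_i,x_i')\|e_i\|\le(1+\theta)^2 s<1$, so by Theorem \ref{theo-projective Lipschitz tensor product} the series $u:=\sum_{i=1}^\infty \delta_{(x_i,x_i')}\boxtimes e_i$ converges in $X\widehat{\boxtimes}_\pi E$ with $\pi(u)<1$. Finally $(q\boxtimes_\pi Q)(\delta_{(x_i,x_i')}\boxtimes e_i)=\delta_{(q(x_i),q(x_i'))}\boxtimes Q(e_i)=\delta_{(z_i,z_i')}\boxtimes f_i$ by Definition \ref{def-Lipschitz tensor product operator}, and the continuity of $q\boxtimes_\pi Q$ gives $(q\boxtimes_\pi Q)(u)=w$. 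Thus the open unit ball is covered, which with the first paragraph yields that $q\boxtimes_\pi Q$ is a quotient operator.

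The main obstacle is exactly the simultaneous lifting of the pair of endpoints $(z_i,z_i')$ to $(x_i,x_i')$ with $d(x_i,x_i')$ controlled by $d(z_i,z_i')$: this is where the hypotheses that $q$ is $1$-Lipschitz and $C$-co-Lipschitz for every $C>1$ are indispensable, since they force the distortion factor to be taken arbitrarily close to $1$. An equivalent, more structural route would first record that these same hypotheses make the composition operator $q^\#\colon Z^\#\to X^\#$, $q^\#(g)=g\circ q$, an isometry: the estimate $\Lip(g\circ q)\le\Lip(g)\Lip(q)=\Lip(g)$ is immediate, and the reverse inequality follows from the very co-Lipschitz lifting used above. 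Since $q^\#$ is the adjoint of the induced map $\widetilde q\colon\F(X)\to\F(Z)$ of Theorem \ref{thm-Arens-Eells}, standard duality then shows $\widetilde q$ is a metric surjection; as $Q$ is one too, the classical fact that the projective tensor product of two metric surjections is again a metric surjection gives that $\widetilde q\otimes_\pi Q$ is a quotient operator, and the commutative diagram of Remark \ref{oper_Lipschitz_project_Banach_project} together with the isometric identifications of Proposition \ref{prop-Lipschitz-projective-is-Banach-projective} transfers this conclusion back to $q\boxtimes_\pi Q$.
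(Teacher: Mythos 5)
Your primary argument is correct, but it takes a genuinely different route from the paper. The paper first reduces the whole statement to the free-space level: using Proposition \ref{prop-Lipschitz-projective-is-Banach-projective} and the commutative diagram of Remark \ref{oper_Lipschitz_project_Banach_project}, it suffices (by the known fact that the Banach-space projective tensor product of two quotient operators is a quotient operator, \cite[Proposition 2.5]{Ryan}) to show that $\tilde{q}\colon\F(X)\to\F(Z)$ is a quotient map, and this is then proved by exactly the point-pair lifting you describe, applied to a representation $u=\sum_j\delta_{(z_j,z'_j)}\boxtimes a_j$ in $Z\widehat{\boxtimes}_\pi\K\equiv\F(Z)$. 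You instead perform the lifting directly in $X\widehat{\boxtimes}_\pi E$, lifting the endpoints via co-Lipschitzness and the vectors $f_i$ via the metric surjectivity of $Q$ simultaneously; this makes the proof self-contained (no appeal to Ryan's tensor-quotient result or to the free-space identification), at the cost of not isolating the reusable fact that $\tilde{q}$ is a linear quotient operator. Your closing ``structural'' sketch is essentially the paper's route, differing only in that you would establish that $\tilde{q}$ is a metric surjection by duality (showing $q^{\#}=(\tilde{q})^{*}$ is an isometry) rather than by direct lifting. Two very small points of care in your main argument: the phrase ``letting $r$ decrease to $C\,d(z_i,z'_i)$'' does not by itself produce a single $x'_i$ achieving the limit, but since $q$ is $C$-co-Lipschitz for \emph{every} $C>1$ you can simply apply co-Lipschitzness with a strictly smaller constant and a radius $r=(1+\theta)d(z_i,z'_i)$ to get $d(x_i,x'_i)<(1+\theta)d(z_i,z'_i)$ outright; and the inclusion of the image of the open ball in the open ball should be stated as $\pi\bigl((q\boxtimes_\pi Q)(u)\bigr)\leq\|q\boxtimes_\pi Q\|\,\pi(u)<1$, which your norm computation does give. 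Neither affects the validity of the proof.
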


\begin{proof}
	Thanks to Proposition \ref{prop-Lipschitz-projective-is-Banach-projective}, Remark \ref{oper_Lipschitz_project_Banach_project}
	and the behavior of the projective tensor norm with respect to quotients \cite[Proposition 2.5]{Ryan},
	it suffices to prove that if $q \colon  X \to Z$ is such a Lipschitz quotient then the induced map $\tilde{q} \colon  \F(X) \to \F(Z)$ is a linear quotient operator.
	Notice that from Theorem \ref{thm-Arens-Eells}, $\| \tilde{q} \| = \Lip(q) = 1$.
	Now let $u \in \F(Z)$, and let $\varepsilon>0$.
	From Proposition \ref{prop-Lipschitz-projective-is-Banach-projective},
	$\F(X) \equiv \F(X) \widehat{\otimes}_\pi \K \equiv X \widehat{\boxtimes}_\pi \K$.
	Thus from Theorem \ref{theo-projective Lipschitz tensor product}, there exists a representation $u = \sum_{j=1}^\infty \delta_{(z_j,z'_j)} \boxtimes a_j$ such that
	$(1+\varepsilon) \pi(u) \ge \sum_{j=1}^\infty |a_j|d(z_j,z'_j)$.
	For each $j$, choose $x_j,x'_j \in X$ such that $q(x_j) = z_j$, $q(x'_j) = z'_j$ and $d(x_j,x'_j) \le (1+\varepsilon)d(z_j,z'_j)$. Setting $u' = \sum_{j=1}^\infty \delta_{(x_j,x'_j)} \boxtimes a_j$, clearly $\tilde{q}(u') = u$ (so it follows that $\tilde{q}$ is surjective) and
	$$
	\pi(u') \le \sum_{j=1}^\infty |a_j|d(x_j,x'_j) \le (1+\varepsilon)\sum_{j=1}^\infty |a_j|d(z_j,z'_j) \le (1+\varepsilon)^2 \pi(u).
	$$
	Since this holds for all $\varepsilon>0$, it follows that $\pi(u)=\inf\left\{\pi(u')\colon\tilde{q}(u')=u\right\}$.
\end{proof}

In a similar manner, the projective norm respects complemented subspaces.
We say that a subset $Z \subset X$ that contains the point 0 is a Lipschitz retract of $X$, or that it is Lipschitz complemented in $X$, if there exists a Lipschitz map (called a Lipschitz retraction) $r \colon  X \to Z$ such that $r(z) = z$ for all $z \in Z$.

\begin{proposition}\label{prop:projective-respects-complemented-subspaces}
Let $Z$ be a Lipschitz retract of $X$, and let $F$ be a complemented subspace of $E$.
Then $Z \widehat{\boxtimes}_\pi F$ is complemented in $X \widehat{\boxtimes}_\pi E$ and the norm on $Z \widehat{\boxtimes}_\pi F$ induced by the Lipschitz projective norm of $X \widehat{\boxtimes}_\pi E$ is equivalent to the Lipschitz projective norm on $Z \widehat{\boxtimes}_\pi F$.
If $Z$ is Lipschitz complemented with a Lipschitz retraction of Lipschitz constant one and $F$ is complemented by a linear projection of norm one, then $Z \widehat{\boxtimes}_\pi F$ is a subspace of $X \widehat{\boxtimes}_\pi E$ and is also complemented by a projection of norm one.	
\end{proposition}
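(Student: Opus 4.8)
The plan is to realize $Z\widehat{\boxtimes}_\pi F$ as a complemented subspace through Lipschitz tensor product operators, passing to completions via the boundedness result for $h\boxtimes_\pi T$. Since $Z$ is a Lipschitz retract of $X$, I would fix the inclusion $\iota\colon Z\to X$, which is base-point preserving and isometric (so $\Lip(\iota)=1$), together with a Lipschitz retraction $r\colon X\to Z$ satisfying $r\circ\iota=\mathrm{id}_Z$. Since $F$ is complemented in $E$, I would factor the bounded projection of $E$ onto $F$ as $j\circ p$, where $j\colon F\to E$ is the inclusion and $p\colon E\to F$ its corestriction, so that $p\circ j=\mathrm{id}_F$ with $j\in\L(F;E)$ and $p\in\L(E;F)$.

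Next I would introduce the Lipschitz tensor product operators $\iota\boxtimes j\colon Z\boxtimes F\to X\boxtimes E$ and $r\boxtimes p\colon X\boxtimes E\to Z\boxtimes F$ of Definition~\ref{def-Lipschitz tensor product operator}. A direct evaluation on an elementary tensor $\delta_{(z,z')}\boxtimes f$ yields the functorial identity $(r\boxtimes p)\circ(\iota\boxtimes j)=(r\circ\iota)\boxtimes(p\circ j)=\mathrm{id}_Z\boxtimes\mathrm{id}_F$, so the composition is the identity on $Z\boxtimes F$. By the boundedness proposition preceding Remark~\ref{oper_Lipschitz_project_Banach_project}, both operators extend uniquely to bounded maps $S:=\iota\boxtimes_\pi j\colon Z\widehat{\boxtimes}_\pi F\to X\widehat{\boxtimes}_\pi E$ and $R:=r\boxtimes_\pi p\colon X\widehat{\boxtimes}_\pi E\to Z\widehat{\boxtimes}_\pi F$, with $\|S\|=\Lip(\iota)\|j\|$ and $\|R\|=\Lip(r)\|p\|$. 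Since $R\circ S=\mathrm{id}$ on the dense subspace $Z\boxtimes F$, continuity upgrades this to $R\circ S=\mathrm{id}_{Z\widehat{\boxtimes}_\pi F}$.

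From $R\circ S=\mathrm{id}$ I would deduce that $S$ is an isomorphism onto its range: the inequalities $\|u\|\leq\|R\|\,\|Su\|$ and $\|Su\|\leq\|S\|\,\|u\|$ show that the norm induced on $S(Z\widehat{\boxtimes}_\pi F)$ is equivalent to the Lipschitz projective norm on $Z\widehat{\boxtimes}_\pi F$. Moreover $S\circ R$ is bounded and idempotent, because $(S\circ R)^2=S\circ(R\circ S)\circ R=S\circ R$; hence it is a continuous projection of $X\widehat{\boxtimes}_\pi E$ onto the closed subspace $S(Z\widehat{\boxtimes}_\pi F)$, proving the first assertion.

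For the second assertion, assume $\Lip(r)=1$ and that the projection $j\circ p$ has norm one. Then $\|j\|=1$, and since $\|p(e)\|=\|(j\circ p)(e)\|\leq\|e\|$ while $p\circ j=\mathrm{id}_F$ forces $\|p\|\geq 1$, we get $\|p\|=1$; combined with $\Lip(\iota)=1$ this gives $\|S\|=\|R\|=1$. Consequently $\|Su\|\leq\|u\|$ and $\|u\|=\|RSu\|\leq\|Su\|$ force $\|Su\|=\|u\|$, so $S$ is an isometry, and $S\circ R$ is a nonzero idempotent with $\|S\circ R\|\leq\|S\|\,\|R\|=1$, hence of norm exactly one; this exhibits $Z\widehat{\boxtimes}_\pi F$ as a norm-one complemented subspace of $X\widehat{\boxtimes}_\pi E$. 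I expect the main obstacle to be the careful verification of the functorial composition identity $(r\boxtimes p)\circ(\iota\boxtimes j)=(r\circ\iota)\boxtimes(p\circ j)$ and its persistence under completion; once that is established, the remaining norm estimates are routine.
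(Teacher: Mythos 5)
Your proof is correct, but it follows a genuinely different route from the paper's. The paper proves this proposition in one line by invoking the identification $X\widehat{\boxtimes}_\pi E\equiv\F(X)\widehat{\otimes}_\pi E$ from Proposition \ref{prop-Lipschitz-projective-is-Banach-projective}, observing that the Lipschitz retraction $r$ induces a norm-$\Lip(r)$ linear projection $\tilde{r}\colon\F(X)\to\F(Z)\subset\F(X)$, and then citing the corresponding complementation result for the Banach-space projective tensor norm in Ryan's book. You instead work entirely inside the Lipschitz tensor product machinery: you build $S=\iota\boxtimes_\pi j$ and $R=r\boxtimes_\pi p$ from Lemma \ref{lem-7} and the boundedness proposition of Section \ref{6}, verify the functorial identity $(r\boxtimes p)\circ(\iota\boxtimes j)=\mathrm{id}$ on elementary tensors (which is immediate from Definition \ref{def-Lipschitz tensor product operator} and linearity), pass to completions by density, and read off the complementation from the idempotent $S\circ R$. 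What your approach buys is self-containedness --- no appeal to the free-space identification or to an external reference --- and it makes the norm constants completely explicit, so the norm-one statement in the second half falls out directly from $\left\|h\boxtimes_\pi T\right\|=\Lip(h)\left\|T\right\|$. What the paper's approach buys is brevity and the reuse of a result already established for Banach spaces; it also makes clear that the phenomenon is really a free-space fact. The only points worth being slightly careful about in your write-up are degenerate cases (if $F=\{0\}$ or $Z=\{0\}$ the claim $\left\|p\right\|\geq 1$ or $\left\|S\circ R\right\|\geq 1$ needs the relevant space to be nonzero, but then the statement is trivial), and the observation that the range of $S\circ R$ coincides with the range of $S$, which follows from $S\circ R\circ S=S$; neither affects the validity of the argument.
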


\begin{proof}
This follows from the corresponding result for the projective tensor product (see \cite[Proposition 2.4]{Ryan}), after noting that a Lipschitz retraction $r \colon  X \to Z$ (that in particular sends 0 to 0) extends to a linear projection $\tilde{r} \colon  \F(X) \to \F(Z) \subset \F(X)$ with $\n{\tilde{r}} = \Lip(r)$.
\end{proof}

Calculating the projective norm of an element in a tensor product of Banach spaces is generally difficult, but there is a particular case where the calculation is relatively easy: for any Banach space $E$, $\ell_1 \widehat{\otimes}_\pi E$ is isometrically isomorphic to $\ell_1(E)$ (see \cite[Example 2.6]{Ryan}).
In the nonlinear setting, trees play a role analogous to that of $\ell_1$ in the linear theory, so the following result is not surprising.

\begin{proposition}\label{proposition-projective-of-tree}
Let $(X,\mathcal{E})$ be a graph with finite vertex set $X$ and edge set $\mathcal{E}$ which is a tree, that is, it is connected 
and contains no cycles.
Consider $X$ as a pointed metric space, with distance function given by the shortest-path distance and a distinguished fixed point $0 \in X$. 
Let $E$ be a Banach space.
Then $X\boxtimes_\pi E$ is isometrically isomorphic to $\ell_1(\mathcal{E};E)$.
\end{proposition}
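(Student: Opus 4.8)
The plan is to leverage the identification of the Lipschitz projective tensor product with a Banach-space projective tensor product. By Proposition \ref{prop-Lipschitz-projective-is-Banach-projective}, the map $I$ sending $u=\sum_{i}\delta_{(x_i,y_i)}\boxtimes e_i$ to $\sum_i(\delta_{x_i}-\delta_{y_i})\otimes e_i$ is a linear isometry of $X\boxtimes_\pi E$ into $\F(X)\otimes_\pi E$; and since the differences $\delta_x-\delta_y$ span $\F(X)$ (recall $\delta_0=0$, so that $\delta_x=\delta_x-\delta_0$), the range of $I$ is the whole algebraic tensor product $\F(X)\otimes_\pi E$. Because $X$ is finite, $\F(X)$ is finite-dimensional, so $\F(X)\otimes_\pi E$ is already complete and coincides with $\F(X)\widehat{\otimes}_\pi E$; in particular $X\boxtimes_\pi E$ is complete and $I$ is a surjective isometry onto $\F(X)\widehat{\otimes}_\pi E$. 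It then suffices to show that $\F(X)$ is isometrically isomorphic to $\ell_1(\mathcal{E})$, for then the classical identification $\ell_1(\mathcal{E})\widehat{\otimes}_\pi E\cong\ell_1(\mathcal{E};E)$ (see \cite[Example 2.6]{Ryan}, valid since $\mathcal{E}$ is finite) finishes the proof.

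To identify $\F(X)$ with $\ell_1(\mathcal{E})$, I would root the tree at the base point $0$. Each vertex $v\neq 0$ has a unique neighbour $p(v)$ on the geodesic from $v$ to $0$, and $v\mapsto\{v,p(v)\}$ is a bijection between $X\setminus\{0\}$ and $\mathcal{E}$. Put $m_v=\delta_v-\delta_{p(v)}\in\F(X)$; since adjacent vertices are at distance $1$ and, by Theorem \ref{thm-Arens-Eells}, the map $x\mapsto\delta_x$ is isometric, one has $\n{m_v}_{\F(X)}=d(v,p(v))=1$. Telescoping along geodesics gives $\delta_x=\sum_{v}m_v$, the sum ranging over the vertices $v\neq0$ on the geodesic from $0$ to $x$ (here we again use $\delta_0=0$), so the family $\{m_v\}_{v\neq0}$ spans $\F(X)$. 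As $\dim\F(X)=\dim X^\#=|X|-1=|\mathcal{E}|$, it is a basis, and it remains to prove that $\n{\sum_v a_vm_v}_{\F(X)}=\sum_v|a_v|$ for all scalars $a_v$.

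The upper bound is just the triangle inequality, $\n{\sum_v a_vm_v}\le\sum_v|a_v|\n{m_v}=\sum_v|a_v|$. For the lower bound---which is the heart of the argument---I would construct, for a fixed family $(a_v)$, a function $g\in X^\#$ with $\Lip(g)\le1$ that realizes the sum. For each $v\neq0$ choose a unimodular scalar $\theta_v$ with $a_v\theta_v=|a_v|$ (taking $\theta_v=1$ when $a_v=0$), and define $g$ on vertices by setting $g(0)=0$ and $g(x)=\sum_{v}\theta_v$, summed over the vertices $v\neq0$ on the geodesic from $0$ to $x$; then $g(v)-g(p(v))=\theta_v$ for every $v$. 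The decisive point is that $\Lip(g)\le1$: for arbitrary $x,y$ the difference $g(x)-g(y)$ telescopes along the unique (since $X$ is a tree) geodesic joining $x$ to $y$ into a sum of $d(x,y)$ terms each of modulus one, whence $|g(x)-g(y)|\le d(x,y)$. Evaluating through the pairing $\pair{g}{m_v}=g(v)-g(p(v))=\theta_v$ of Theorem \ref{thm-Arens-Eells} yields $\pair{g}{\sum_v a_vm_v}=\sum_v a_v\theta_v=\sum_v|a_v|$, so that $\n{\sum_v a_vm_v}_{\F(X)}\ge\sum_v|a_v|$. This establishes the isometry $\F(X)\cong\ell_1(\mathcal{E})$ and, combined with the first paragraph, the proposition. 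The main obstacle to watch is exactly the verification that the piecewise-defined $g$ is globally $1$-Lipschitz, which rests precisely on the tree structure, namely the uniqueness of geodesics and the additivity of the shortest-path distance along them.
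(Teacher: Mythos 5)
Your proof is correct, but it follows a genuinely different route from the one in the paper. The paper argues directly on representations of Lipschitz tensors: it splits each $\delta_{(x,y)}\boxtimes v$ along the geodesic joining $x$ and $y$ into edge terms without changing the cost, consolidates the terms belonging to a common edge, proves that the resulting edge-supported representation is unique (via Proposition \ref{pro-0} together with a separating function constructed by induction on the size of the tree), and then reads off the isometry with $\ell_1(\mathcal{E};E)$. You instead factor everything through the free space: by Proposition \ref{prop-Lipschitz-projective-is-Banach-projective} you reduce the statement to the scalar identification $\F(X)\cong\ell_1(\mathcal{E})$ --- a finite-tree case of Godard's theorem \cite{Godard}, which the paper itself only invokes later, for $\R$-trees, in Corollary \ref{cor:vector-valued-godard} --- and then apply the classical isometry $\ell_1(\mathcal{E})\widehat{\otimes}_\pi E\cong\ell_1(\mathcal{E};E)$ of \cite[Example 2.6]{Ryan}. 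Your key computation is sound: the function $g$ with prescribed unimodular increments along the edges is $1$-Lipschitz precisely because $g(x)-g(y)$ telescopes along the unique geodesic into $d(x,y)$ unimodular terms, and this gives the lower bound $\bign{\sum_v a_vm_v}\geq\sum_v|a_v|$; your remark that finite-dimensionality of $\F(X)$ makes the algebraic tensor product already complete also correctly handles the fact that the statement concerns $X\boxtimes_\pi E$ rather than its completion. What your approach buys is modularity and a clean isolation of where the tree structure enters (the isometric computation of $\F(X)$); what the paper's direct approach buys is independence from the free-space machinery and explicit information about canonical (edge-supported) representations of Lipschitz tensors over a tree.
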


\begin{proof}
We say that a vertex $x \in X$ is positive (negative) if it is at an even (respectively, odd) distance from $0 \in X$. Note that, since $(X,\mathcal{E})$ is a tree, the endpoints of every edge in $\mathcal{E}$ have different parities. Therefore every edge $\{x,y\}$ in $\mathcal{E}$ will be written as $(x,y)$ with $x$ negative and $y$ positive.

Consider $x,y \in X$. Let $n=d(x,y)$ and $\{x=z_0,z_1,\dots,z_n=y\}$ be the unique minimal-length path in $(X,\mathcal{E})$
joining $x$ and $y$.
Since, for each $v \in E$,
$$
\n{v}d(x,y) = \sum_{i=1}^n \n{v}d(z_i,z_{i-1}),
$$
in order to calculate $\pi(u)$ for $u \in X \boxtimes E$, it suffices to consider only representations involving $\delta_{(x_i,y_i)}$ with $(x_i,y_i)\in \mathcal{E}$. By the triangle inequality, in the representation we can consolidate all terms corresponding to the same edge $(x_i,y_i)\in \mathcal{E}$, so we can consider only representations of the form
$$
u = \sum_{(x,y)\in \mathcal{E}} \delta_{(x,y)} \boxtimes v_{(x,y)}.
$$
But, for each $u \in X \boxtimes E$, there is only one such representation. 
To see that we use the following claim, which can be proved by induction on the size of the tree:
given $(x_0,y_0)\in \mathcal{E}$, there exists a function $g\in X^\#$ such that $g(x_0)-g(y_0) \neq 0$ and
$g(x)-g(y)=0$ for all $(x,y)\in \mathcal{E}$ with $(x,y)\neq (x_0,y_0)$.

Now let $\sum_{(x,y)\in \mathcal{E}} \delta_{(x,y)} \boxtimes e_{(x,y)}$ and 
$\sum_{(x,y)\in \mathcal{E}} \delta_{(x,y)} \boxtimes v_{(x,y)}$
be two representations of $u\in X\boxtimes E$. Given $(x_0,y_0)\in\mathcal{E}$, take the function
$g\in X^\#$ of the previous claim. Then, by Proposition \ref{pro-0}, we have that 
$$
0=\sum_{(x,y)\in \mathcal{E}} (g(x)-g(y))(e_{(x,y)}-v_{(x,y)})=(g(x_0)-g(y_0))(e_{(x_0,y_0)}-v_{(x_0,y_0)}),
$$
and thus $e_{(x_0,y_0)}=v_{(x_0,y_0)}$. The arbitrariness of $(x_0,y_0)$ shows that the representation of $u$ is unique. If we define $J \colon  X \boxtimes_\pi E \to \ell_1(\mathcal{E};E)$ by
$u \mapsto (v_{(x,y)})_{(x,y) \in \mathcal{E}}$, $J$ is then clearly an isometric isomorphism between $X\boxtimes_\pi E$ and $\ell_1(\mathcal{E};E)$.
\end{proof}

More generally, in the linear case we have that $L_1(\mu)\widehat{\otimes}_\pi E$ is isometrically isomorphic to $L_1(\mu; E)$ for any measure $\mu$ (see \cite[Example 2.19]{Ryan}).
In our nonlinear setting, a possible analogue will be given by a generalization of Proposition \ref{proposition-projective-of-tree} to a more general class of metric trees.
This will depend heavily on the identification of the Lipschitz-free space over such trees carried out in \cite{Godard}.
Before stating the result, let us recall a definition.
An $\R$-tree is a metric space $X$ satisfying the following two conditions: (1) For any points $a$ and $b$ in $X$, there exists a unique isometry $\phi$ of the closed interval $[0,d(a,b)]$ into $X$ such that $\phi(0)=a$ and $\phi(d(a,b)) = b$; (2) Any one-to-one continuous mapping $\varphi \colon  [0,1] \to X$ has the same range as the isometry $\phi$ associated to the points $a = \varphi(0)$ and $b=\varphi(1)$.

\begin{corollary}\label{cor:vector-valued-godard}
Let $X$ be an $\R$-tree and $E$ a Banach space. Then there exists a measure $\mu$ such that $X \widehat{\boxtimes}_\pi E$ is isometric to $L_1(\mu; E)$.
\end{corollary}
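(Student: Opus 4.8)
The plan is to obtain the desired isometry by chaining together three identifications, of which only one carries genuine content. First, Proposition \ref{prop-Lipschitz-projective-is-Banach-projective} already tells us that $X\widehat{\boxtimes}_\pi E$ is isometrically isomorphic to $\F(X)\widehat{\otimes}_\pi E$, so the problem is transferred entirely to the linear Banach-space setting and reduces to understanding $\F(X)$.

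The substantive step is to invoke Godard's identification of the Lipschitz-free space over an $\R$-tree from \cite{Godard}: when $X$ is an $\R$-tree (with its distinguished base point $0$), the space $\F(X)$ is isometrically isomorphic to $L_1(\mu)$, where $\mu$ is the natural length measure on the tree. This is the exact analogue, in the continuous setting, of the finite computation carried out in Proposition \ref{proposition-projective-of-tree}, where the edge set $\mathcal{E}$ played the role of the support of $\mu$ and $\F(X)\equiv X\widehat{\boxtimes}_\pi\K$ was identified with $\ell_1(\mathcal{E})$. Fixing an isometric isomorphism $U\colon\F(X)\to L_1(\mu)$ provided by Godard's theorem is the heart of the argument.

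It then remains to push this isometry through the projective tensor product and to apply the classical vector-valued identification. By functoriality of the projective tensor norm, the isometric isomorphism $U$ induces an isometric isomorphism $U\otimes\mathrm{id}_E\colon\F(X)\widehat{\otimes}_\pi E\to L_1(\mu)\widehat{\otimes}_\pi E$; indeed, the projective norm is preserved under tensoring an isometric isomorphism with the identity, since $U$ and $U^{-1}$ are norm-one and map elementary tensors to elementary tensors of the same projective cost. Finally, I would appeal to the classical isometry $L_1(\mu)\widehat{\otimes}_\pi E\cong L_1(\mu;E)$ recalled just before the statement (see \cite[Example 2.19]{Ryan}). Composing
$$
X\widehat{\boxtimes}_\pi E\;\cong\;\F(X)\widehat{\otimes}_\pi E\;\cong\;L_1(\mu)\widehat{\otimes}_\pi E\;\cong\;L_1(\mu;E)
$$
yields the asserted isometry.

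The main obstacle is entirely concentrated in the second step: securing Godard's isometric identification $\F(X)\cong L_1(\mu)$ and correctly pinning down the measure $\mu$ (the length measure of the tree) together with the role of the base point. Once that identification is in hand, the remaining two steps are formal: Proposition \ref{prop-Lipschitz-projective-is-Banach-projective} is already proved, and the functoriality of $\widehat{\otimes}_\pi$ under isometric isomorphisms together with the standard $L_1(\mu)\widehat{\otimes}_\pi E\cong L_1(\mu;E)$ isometry is routine bookkeeping.
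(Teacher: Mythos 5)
Your proposal is correct and follows exactly the paper's own argument: Proposition \ref{prop-Lipschitz-projective-is-Banach-projective} to pass to $\F(X)\widehat{\otimes}_\pi E$, Godard's identification $\F(X)\cong L_1(\mu)$ from \cite[Corollary 3.3]{Godard}, and the classical isometry $L_1(\mu)\widehat{\otimes}_\pi E\cong L_1(\mu;E)$ from \cite[Example 2.19]{Ryan}. The only difference is that you make explicit the (routine) functoriality of $\widehat{\otimes}_\pi$ under isometric isomorphisms, which the paper leaves implicit.
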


\begin{proof}
By \cite[Corollary 3.3]{Godard}, there exists a measure $\mu$ such that $\F(X)$ is isometrically isomorphic to $L_1(\mu)$.
From Proposition \ref{prop-Lipschitz-projective-is-Banach-projective}, $X \widehat{\boxtimes}_\pi E$ is isometrically isomorphic to $\F(X) \widehat{\otimes}_\pi E$.
Finally, from \cite[Example 2.19]{Ryan}, $L_1(\mu) \widehat{\otimes}_\pi E$ is isometric to $L_1(\mu; E)$.
\end{proof}

We finish this section obtaining a universal property for bounded linear operators which follows from the universal property of the projective tensor product of Banach spaces, since $X \boxtimes_\pi E \equiv \F(X) \otimes_\pi E$ by Proposition \ref{prop-Lipschitz-projective-is-Banach-projective}.

\begin{proposition}
Let $X$ be a pointed metric space and $E$ a Banach space.
Then the map $(x,e)\mapsto \delta_{(x,0)}\boxtimes e$, from $X\times E$ into $X\boxtimes_\pi E$, satisfies:
\begin{enumerate}
\item[a)] For each $e\in E$, the function $x\mapsto \delta_{(x,0)}\boxtimes e$, from $X$ into $X\boxtimes_\pi E$, belongs to $\Lipo(X,X\boxtimes_\pi E)$.
\item[b)] Given $x\in X$, the map $e\mapsto \delta_{(x,0)}\boxtimes e$, from $E$ into  $X\boxtimes_\pi E$, is a bounded linear operator.
\end{enumerate}
Moreover, for each normed space $F$ and for each map $\psi\colon X\times E \to F$ verifying a) and b) (that is, $\psi$ is a Lipschitz operator in the first variable and a bounded linear operator in the second one), there is a unique bounded
linear map $\widetilde{\psi}\colon X\boxtimes_\pi E \to F$ such that 
$\widetilde{\psi}\left(\delta_{(x,0)}\boxtimes e \right)=\psi(x,e)$ for all $x\in X$ and $e\in E$. 
$$
\begin{picture}(100,72)
\put(0,58){$X\times E$}
\put(28,60){\vector(1,0){40}}
\put(45,65){$\psi$}
\put(72,58){$F$}
\put(10,50){\vector(0,-1){40}}
\put(0,0){$X\boxtimes_\pi E$}
\multiput(27,13)(7,7){5}{\rotatebox{45}{\line(1,0){5}}}
\put(55,48){\rotatebox{45}{\vector(1,0){8}}}
\put(50,22){$\widetilde{\psi}$}
\end{picture}
$$
\end{proposition}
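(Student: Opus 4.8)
The plan is to derive everything from Proposition \ref{prop-Lipschitz-projective-is-Banach-projective}, which identifies $X\boxtimes_\pi E$ isometrically with $\F(X)\otimes_\pi E$, together with the universal property of the projective tensor product of Banach spaces. First I would dispatch a) and b), which are immediate consequences of the fact that $\pi$ is a Lipschitz cross-norm (Theorem \ref{teo-pro-norm}) and of Lemma \ref{lem-1}. Indeed, for fixed $e\in E$ one has $\delta_{(x,0)}\boxtimes e-\delta_{(y,0)}\boxtimes e=\delta_{(x,y)}\boxtimes e$, so $\pi(\delta_{(x,0)}\boxtimes e-\delta_{(y,0)}\boxtimes e)=d(x,y)\|e\|$; since $\delta_{(0,0)}\boxtimes e=0$, the map $x\mapsto\delta_{(x,0)}\boxtimes e$ lies in $\Lipo(X,X\boxtimes_\pi E)$ with Lipschitz constant $\|e\|$. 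For fixed $x\in X$, linearity of $e\mapsto\delta_{(x,0)}\boxtimes e$ follows from Lemma \ref{lem-1}, and $\pi(\delta_{(x,0)}\boxtimes e)=d(x,0)\|e\|$ gives boundedness with norm $d(x,0)$.

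For the universal property, given $\psi\colon X\times E\to F$ satisfying a) and b), the core of the argument is to manufacture a bounded bilinear map $B\colon\F(X)\times E\to F$. For each $e\in E$ the map $\psi(\cdot,e)$ belongs to $\Lipo(X,F)$ by a), so by Theorem \ref{thm-Arens-Eells}(iv) it admits a unique bounded linear extension $\beta_e\colon\F(X)\to F$ with $\beta_e(\delta_x)=\psi(x,e)$ and $\|\beta_e\|=\Lip(\psi(\cdot,e))$. I would then set $B(\gamma,e)=\beta_e(\gamma)$. Separate linearity is routine: $B$ is linear in $\gamma$ because each $\beta_e$ is, and it is linear in $e$ because on the generators one has $B(\delta_x,e)=\psi(x,e)$, which is linear in $e$ by b), so the uniqueness clause of Theorem \ref{thm-Arens-Eells}(iv) forces the assignment $e\mapsto\beta_e$ to be linear.

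The main obstacle is the \emph{joint} boundedness of $B$, namely an estimate $\|B(\gamma,e)\|\le C\|\gamma\|\,\|e\|$. Since $\|B(\gamma,e)\|\le\Lip(\psi(\cdot,e))\|\gamma\|$, it suffices to bound $\Lip(\psi(\cdot,e))$ by a fixed multiple of $\|e\|$, and here I would invoke the uniform boundedness principle. For $x\neq y$ consider the linear operators $T_{x,y}\colon E\to F$ given by $T_{x,y}(e)=(\psi(x,e)-\psi(y,e))/d(x,y)$, each bounded by b). For every fixed $e$ one has $\sup_{x\neq y}\|T_{x,y}(e)\|=\Lip(\psi(\cdot,e))<\infty$ by a), so the family $\{T_{x,y}\}$ is pointwise bounded on the Banach space $E$; the uniform boundedness principle yields $C:=\sup_{x\neq y}\|T_{x,y}\|<\infty$, whence $\Lip(\psi(\cdot,e))\le C\|e\|$ and $B$ is bounded bilinear.

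Finally I would assemble the conclusion. By the universal property of the projective tensor product, $B$ induces a unique bounded linear map $\widetilde{B}\colon\F(X)\otimes_\pi E\to F$ with $\widetilde{B}(\gamma\otimes e)=B(\gamma,e)$. Composing with the isometric embedding $I\colon X\boxtimes_\pi E\to\F(X)\otimes_\pi E$ of Proposition \ref{prop-Lipschitz-projective-is-Banach-projective}, and using that $\delta_0=0$ in $\F(X)$ (so that $I(\delta_{(x,0)}\boxtimes e)=\delta_x\otimes e$), I set $\widetilde{\psi}=\widetilde{B}\circ I$, which is bounded linear and satisfies $\widetilde{\psi}(\delta_{(x,0)}\boxtimes e)=B(\delta_x,e)=\psi(x,e)$. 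Uniqueness is immediate, since the elements $\delta_{(x,0)}\boxtimes e$ span $X\boxtimes_\pi E$ by Lemma \ref{lem-0}, so any linear map is determined by its values on them.
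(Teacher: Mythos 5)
Your proposal is correct and follows essentially the same route the paper intends: the paper gives no written proof, stating only that the result ``follows from the universal property of the projective tensor product of Banach spaces, since $X\boxtimes_\pi E\equiv\F(X)\otimes_\pi E$ by Proposition \ref{prop-Lipschitz-projective-is-Banach-projective},'' and your argument carries out precisely that plan. The one nontrivial detail you supply --- using the uniform boundedness principle on the operators $T_{x,y}$ to convert the separate conditions a) and b) into the joint bound $\Lip(\psi(\cdot,e))\leq C\left\|e\right\|$ needed for a bounded bilinear map on $\F(X)\times E$ --- is exactly the right way to fill the gap the paper leaves implicit.
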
 


\section{The Lipschitz p-nuclear norms}\label{7}

We introduce now the Lipschitz $p$-nuclear norms $d_p$ on $X\boxtimes E$ for $1\leq p\leq\infty$. 
They are Lipschitz versions of the known tensor norms of Chevet \cite{c} and Saphar \cite{s}. 
Similar versions were introduced in \cite{ch11} on spaces of $E$-valued molecules on $X$, 
where they were shown to be in duality with spaces of Lipschitz $p'$-summing maps.

\begin{definition}\label{def-che-norms}
Let $1\leq p\leq\infty$. Let $E$ be a Banach space and let $e_1,\ldots,e_n\in E$. Define: 
$$
\left\|(e_1,\ldots,e_n)\right\|_p
=\left\{\begin{array}{lll}
\displaystyle{\left(\sum_{i=1}^n\left\|e_i\right\|^p\right)^{\frac{1}{p}}}&\text{ if }& 1\leq p<\infty ,\\
& & \\
\max_{1\leq i\leq n}\left\|e_i\right\|&\text{ if }& p=\infty .
\end{array}\right.
$$
Let $X$ be a pointed metric space, $x_1,\ldots,x_n,y_1,\ldots,y_n\in X$ and 
$\lambda_1,\ldots,\lambda_n\in\R^+$. Define: 
$$
\left\|(\lambda_1\delta_{(x_1,y_1)},\ldots,\lambda_n\delta_{(x_n,y_n)})\right\|_p^{Lw}
=\left\{\begin{array}{lll}
\sup_{g\in B_{X^\#}}\displaystyle{\left(\sum_{i=1}^n \lambda_i^p\left|g(x_i)-g(y_i)\right|^p\right)^{\frac{1}{p}}}&\text{ if }& 1\leq p<\infty ,\\
& & \\
\sup_{g\in B_{X^\#}}\displaystyle{\left(\max_{1\leq i\leq n}\lambda_i\left|g(x_i)-g(y_i)\right|\right)} &\text{ if }& p=\infty .
\end{array}\right.
$$
Let $p'$ be the conjugate index of $p$ defined by $p'=p/(p-1)$ if $p\neq 1$, $p'=\infty$ if $p=1$, and $p'=1$ if $p=\infty$. 
 
For each $u\in X\boxtimes E$, define:
$$
d_p(u)=\inf\left\{
\left\|(\lambda_1\delta_{(x_1,y_1)},\ldots,\lambda_n\delta_{(x_n,y_n)})\right\|_{p'}^{Lw}\left\|(e_1,\ldots,e_n)\right\|_p
\colon u=\sum_{i=1}^n \lambda_i\delta_{(x_i,y_i)}\boxtimes e_i\right\},
$$
the infimum being taken over all representations of $u$.
\end{definition}

\begin{theorem}\label{teo-che-norms}
For $1\leq p\leq \infty$, $d_p$ is a uniform and dualizable Lipschitz cross-norm on $X\boxtimes E$.
\end{theorem}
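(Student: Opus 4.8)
The plan is to verify that $d_p$ is a norm satisfying $\varepsilon\leq d_p\leq\pi$, so that Proposition~\ref{prop-carac-dualizable} immediately gives that $d_p$ is a dualizable Lipschitz cross-norm, and then to establish uniformity by a direct estimate carried out on representations. Throughout I write a representation of $u$ as $u=\sum_{i=1}^n\lambda_i\delta_{(x_i,y_i)}\boxtimes e_i$ with $\lambda_i\in\R^+$, and I abbreviate the two factors occurring in Definition~\ref{def-che-norms} by the weak factor $\|(\lambda_i\delta_{(x_i,y_i)})\|_{p'}^{Lw}$ and the strong factor $\|(e_i)\|_p$.

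First I would check that $d_p$ is a seminorm. Absolute homogeneity is routine: for $\mu\in\K$ one keeps the scalars $\lambda_i\in\R^+$ fixed and replaces $e_i$ by $\mu e_i$, which multiplies the strong factor by $|\mu|$ and leaves the weak factor unchanged, giving $d_p(\mu u)=|\mu|d_p(u)$ after the usual two-sided argument. The triangle inequality is the crux. Given $u=\sum_i\lambda_i\delta_{(x_i,y_i)}\boxtimes e_i$ and $v=\sum_j\mu_j\delta_{(x'_j,y'_j)}\boxtimes f_j$ that are nearly optimal, I would first exploit the scaling invariance $\lambda_i\delta_{(x_i,y_i)}\boxtimes e_i=(t\lambda_i)\delta_{(x_i,y_i)}\boxtimes(e_i/t)$, which multiplies the weak factor of a block by $t$ and the strong factor by $1/t$ and hence fixes the product of the two factors of each block, to rescale the two blocks separately. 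I then concatenate them as in Lemma~\ref{lem-conca} and bound, using the subadditivity of the supremum defining $\|\cdot\|_{p'}^{Lw}$ for the weak factor and the additivity under $p$-th powers for the strong factor, followed by H\"older's inequality in the form $(\sum_k\alpha_k^{p'})^{1/p'}(\sum_k\beta_k^p)^{1/p}\geq\sum_k\alpha_k\beta_k$. The decisive point is that, because $\frac1p+\frac1{p'}=1$, when the rescaling is chosen to make H\"older tight the exponents collapse and the product reduces exactly to the sum of the two block products; thus $d_p(u+v)\leq d_p(u)+d_p(v)+2\eta$ for nearly optimal representations, and letting $\eta\to 0$ finishes the argument. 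I expect this rescaling-plus-H\"older step to be the main obstacle, as it is the characteristic difficulty of Lapresté/Chevet--Saphar-type norms; the endpoint cases $p=1$ and $p=\infty$ require the evident modifications with maxima.

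For the comparison with $\varepsilon$ and $\pi$: given any representation of $u$ and any $g\in B_{X^\#}$, $\phi\in B_{E^*}$, H\"older's inequality gives $\bigl|\sum_i\lambda_i(g(x_i)-g(y_i))\langle\phi,e_i\rangle\bigr|\leq\|(\lambda_i\delta_{(x_i,y_i)})\|_{p'}^{Lw}\,\|(e_i)\|_p$; taking the supremum over $g,\phi$ and then the infimum over representations yields $\varepsilon\leq d_p$, which in particular forces $d_p(u)=0\Rightarrow\varepsilon(u)=0\Rightarrow u=0$ and so completes the proof that $d_p$ is a norm. For the reverse bound, the one-term representation shows $d_p(\delta_{(x,y)}\boxtimes e)\leq d(x,y)\|e\|$, whence the triangle inequality already proved gives $d_p(u)\leq\sum_i d(x_i,y_i)\|e_i\|$ for every representation and so $d_p\leq\pi$. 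With $\varepsilon\leq d_p\leq\pi$ in hand, Proposition~\ref{prop-carac-dualizable} shows that $d_p$ is a dualizable Lipschitz cross-norm.

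Finally I would prove uniformity directly. For $h\in\Lipo(X,X)$ and $T\in\L(E;E)$, I estimate $d_p$ of $(h\boxtimes T)(u)=\sum_i\lambda_i\delta_{(h(x_i),h(y_i))}\boxtimes T(e_i)$ on this very representation. Using that $g\circ h/\Lip(h)$ ranges inside $B_{X^\#}$ as $g$ does, which is the Lipschitz adjoint estimate $\|h^\#\|=\Lip(h)$ recorded in the proof of Theorem~\ref{teo-inj-norm}, bounds the weak factor by $\Lip(h)\,\|(\lambda_i\delta_{(x_i,y_i)})\|_{p'}^{Lw}$, while $\|T(e_i)\|\leq\|T\|\,\|e_i\|$ bounds the strong factor by $\|T\|\,\|(e_i)\|_p$. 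Multiplying and taking the infimum over representations yields $d_p((h\boxtimes T)(u))\leq\Lip(h)\|T\|\,d_p(u)$, which is exactly uniformity; alternatively this can be phrased through Proposition~\ref{prop-dual-2} and Remark~\ref{prop-dual-3}. The degenerate cases $\Lip(h)=0$ or $\|T\|=0$ are trivial, since then $(h\boxtimes T)(u)=0$.
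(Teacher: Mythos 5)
Your proposal is correct and follows essentially the same route as the paper: homogeneity and the triangle inequality via block rescaling and concatenation, the sandwich $\varepsilon\leq d_p\leq\pi$ obtained from H\"older's inequality and the one-term representation so that Proposition~\ref{prop-carac-dualizable} yields dualizability, and a direct two-factor estimate for uniformity. The only cosmetic difference is in the triangle inequality, where the paper applies Young's inequality $ab\leq a^{p'}/p'+b^p/p$ with an explicit choice of the rescaling parameters $r,s$, while you invoke the equality case of H\"older over the two blocks; these are the same standard Chevet--Saphar computation.
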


\begin{proof}
Let $u\in X\boxtimes E$ and let $\sum_{i=1}^n\lambda_i\delta_{(x_i,y_i)}\boxtimes e_i$ be a representation of $u$. 
Clearly, $d_p(u)\geq 0$. Let $\lambda\in\K$. 
Since $\sum_{i=1}^n \lambda_i\delta_{(x_i,y_i)}\boxtimes \lambda e_i$ is a representation of 
$\lambda u$, Definition \ref{def-che-norms} yields
$$
d_p(\lambda u)
\leq\left\|(\lambda_1\delta_{(x_1,y_1)},\ldots,\lambda_n\delta_{(x_n,y_n)})\right\|_{p'}^{Lw}\left\|(\lambda e_1,\ldots,\lambda e_n)\right\|_p
=\left|\lambda\right|\left\|(\lambda_1\delta_{(x_1,y_1)},\ldots,\lambda_n\delta_{(x_n,y_n)})\right\|_{p'}^{Lw}\left\|(e_1,\ldots,e_n)\right\|_p.
$$
If $\lambda=0$, it follows that $d_p(\lambda u)=0=\left|\lambda\right|d_p(u)$. For $\lambda\neq 0$, since the preceding inequality holds for every representation of $u$, we deduce that $d_p(\lambda u)\leq\left|\lambda\right|d_p(u)$. For the converse estimate, note that $d_p(u)=d_p(\lambda^{-1}(\lambda u))\leq\left|\lambda^{-1}\right|d_p(\lambda u)$ by using the proved inequality, thus $\left|\lambda\right|d_p(u)\leq d_p(\lambda u)$ and hence $d_p(\lambda u)=\left|\lambda\right|d_p(u)$.

We prove the triangular inequality for $d_p$ as follows. Let $u,v\in X\boxtimes E$ and let $\varepsilon>0$. If $u=0$ or $v=0$, there is nothing to prove. Assume $u\neq 0\neq v$. We can choose representations for $u$ and $v$, say
$$
u=\sum_{i=1}^n \lambda_i\delta_{(x_i,y_i)}\boxtimes e_i,\qquad
v=\sum_{i=1}^m \lambda'_i\delta_{(x'_i,y'_i)}\boxtimes e'_i, 
$$
such that
$$
\left\|(\lambda_1\delta_{(x_1,y_1)},\ldots,\lambda_n\delta_{(x_n,y_n)})\right\|_{p'}^{Lw}\left\|(e_1,\ldots,e_n)\right\|_p\leq d_p(u)+\varepsilon
$$
and 
$$
\left\|(\lambda'_1\delta_{(x'_1,y'_1)},\ldots,\lambda'_m\delta_{(x'_m,y'_m)})\right\|_{p'}^{Lw}\left\|(e'_1,\ldots,e'_m)\right\|_p\leq d_p(v)+\varepsilon .
$$ 
Fix $r,s\in\mathbb{R}^+$ arbitrarily and define 
$$
\lambda''_i\delta_{(x''_i,y''_i)}=\left\{\begin{array}{lll}
r^{-1}\lambda_i\delta_{(x_i,y_i)}&\text{ if } i=1,\ldots,n,\\
s^{-1}\lambda'_{i-n}\delta_{(x'_{i-n},y'_{i-n})}&\text{ if } i=n+1,\ldots,n+m,
\end{array}\right.
\qquad
e''_i=\left\{\begin{array}{lll}
re_i&\text{ if } i=1,\ldots,n,\\
se'_{i-n}&\text{ if } i=n+1,\ldots,n+m.
\end{array}\right.
$$
It is plain that $u+v=\sum_{i=1}^{n+m}\lambda''_i\delta_{(x''_i,y''_i)}\boxtimes e''_i$ and therefore we have
$$
d_p(u+v)\leq\left\|(\lambda''_1\delta_{(x''_1,y''_1)},\ldots,\lambda''_{n+m}\delta_{(x''_{n+m},y''_{n+m})})\right\|_{p'}^{Lw}\left\|(e''_1,\ldots,e''_{n+m})\right\|_p.
$$
We distinguish three cases. 

\begin{cs} $1<p<\infty$. An easy verification yields  
$$
\left(\left\|(\lambda''_1\delta_{(x''_1,y''_1)},\ldots,\lambda''_{n+m}\delta_{(x''_{n+m},y''_{n+m})})\right\|_{p'}^{Lw}\right)^{p'}
\leq\left(r^{-1}\left\|(\lambda_1\delta_{(x_1,y_1)},\ldots,\lambda_n\delta_{(x_n,y_n)})\right\|_{p'}^{Lw}\right)^{p'}
+\left(s^{-1}\left\|(\lambda'_1\delta_{(x'_1,y'_1)},\ldots,\lambda'_m\delta_{(x'_m,y'_m)})\right\|_{p'}^{Lw}\right)^{p'} 
$$
and
$$
\left(\left\|(e''_1,\ldots,e''_{n+m})\right\|_p\right)^p
=\left(r\left\|(e_1,\ldots,e_n)\right\|_p\right)^p+\left(s\left\|(e'_1,\ldots,e'_m)\right\|_p\right)^p.
$$
Using Young's inequality, it follows that 
\begin{align*}
d_p(u+v)
&\leq\left\|(\lambda''_1\delta_{(x''_1,y''_1)},\ldots,\lambda''_{n+m}\delta_{(x''_{n+m},y''_{n+m})})\right\|_{p'}^{Lw}\left\|(e''_1,\ldots,e''_{n+m})\right\|_p \\
&\leq\frac{1}{p'}\left(\left\|(\lambda''_1\delta_{(x''_1,y''_1)},\ldots,\lambda''_{n+m}\delta_{(x''_{n+m},y''_{n+m})})\right\|_{p'}^{Lw}\right)^{p'}+\frac{1}{p}\left(\left\|(e''_1,\ldots,e''_{n+m})\right\|_p\right)^p\\
&\leq\frac{r^{-p'}}{p'}\left(\left\|(\lambda_1\delta_{(x_1,y_1)},\ldots,\lambda_n\delta_{(x_n,y_n)})\right\|_{p'}^{Lw}\right)^{p'}+\frac{r^p}{p}\left(\left\|(e_1,\ldots,e_n)\right\|_p\right)^p\\
&+\frac{s^{-p'}}{p'}\left(\left\|(\lambda'_1\delta_{(x'_1,y'_1)},\ldots,\lambda'_m\delta_{(x'_m,y'_m)})\right\|_{p'}^{Lw}\right)^{p'}+\frac{s^p}{p}\left(\left\|(e'_1,\ldots,e'_m)\right\|_p\right)^p.
\end{align*}
Since $r,s$ were arbitrary in $\R^+$, taking 
\begin{align*}
r&=(d_p(u)+\varepsilon)^{-1/p'}\left\|(\lambda_1\delta_{(x_1,y_1)},\ldots,\lambda_n\delta_{(x_n,y_n)})\right\|_{p'}^{Lw},\\
s&=(d_p(v)+\varepsilon)^{-1/p'}\left\|(\lambda'_1\delta_{(x'_1,y'_1)},\ldots,\lambda'_m\delta_{(x'_m,y'_m)})\right\|_{p'}^{Lw},
\end{align*}
we obtain that $d_p(u+v)\leq d_p(u)+d_p(v)+2\varepsilon$.
\end{cs}

\begin{cs} $p=1$. 
Now we have  
\begin{align*}
d_1(u+v)
&\leq\left\|(\lambda''_1\delta_{(x''_1,y''_1)},\ldots,\lambda''_{n+m}\delta_{(x''_{n+m},y''_{n+m})})\right\|_{\infty}^{Lw}
\left\|(e''_1,\ldots,e''_{n+m})\right\|_1 \\
&=\left(\max\left\{r^{-1}\left\|(\lambda_1\delta_{(x_1,y_1)},\ldots,\lambda_n\delta_{(x_n,y_n)})\right\|_{\infty}^{Lw},
s^{-1}\left\|(\lambda'_1\delta_{(x'_1,y'_1)},\ldots,\lambda'_m\delta_{(x'_m,y'_m)})\right\|_{\infty}^{Lw}\right\}\right)\\
&\cdot \left(r\left\|(e_1,\ldots,e_n)\right\|_1+s\left\|(e'_1,\ldots,e'_m)\right\|_1\right),
\end{align*}
and taking, in particular, $r=\left\|(\lambda_1\delta_{(x_1,y_1)},\ldots,\lambda_n\delta_{(x_n,y_n)})\right\|_{\infty}^{Lw}$ and $s=\left\|(\lambda'_1\delta_{(x'_1,y'_1)},\ldots,\lambda'_m\delta_{(x'_m,y'_m)})\right\|_{\infty}^{Lw}$, we infer that $d_1(u+v)\leq d_1(u)+d_1(v)+2\varepsilon$.
\end{cs}

\begin{cs} $p=\infty$. We have 
\begin{align*}
d_\infty(u+v)
&\leq\left\|(\lambda''_1\delta_{(x''_1,y''_1)},\ldots,\lambda''_{n+m}\delta_{(x''_{n+m},y''_{n+m})})\right\|_1^{Lw}
\left\|(e''_1,\ldots,e''_{n+m})\right\|_\infty \\
&\leq\left(r^{-1}\left\|(\lambda_1\delta_{(x_1,y_1)},\ldots,\lambda_n\delta_{(x_n,y_n)})\right\|_1^{Lw}
+s^{-1}\left\|(\lambda'_1\delta_{(x'_1,y'_1)},\ldots,\lambda'_n\delta_{(x'_n,y'_n)})\right\|_1^{Lw}\right)\\
&\cdot\left(\max\left\{r\left\|(e_1,\ldots,e_n)\right\|_\infty,s\left\|(e'_1,\ldots,e'_m)\right\|_\infty\right\}\right),
\end{align*}
and for $r=\left\|(e_1,\ldots,e_n)\right\|^{-1}_\infty$ and $s=\left\|(e'_1,\ldots,e'_m)\right\|^{-1}_\infty$, we deduce that $d_\infty(u+v)\leq d_\infty(u)+d_\infty(v)+2\varepsilon$.
\end{cs}

In any case, $d_p(u+v)\leq d_p(u)+d_p(v)+2\varepsilon$ and so $d_p(u+v)\leq d_p(u)+d_p(v)$  by the arbitrariness of $\varepsilon$. Hence $d_p$ is a seminorm for $1\leq p\leq \infty$. 
Now, we claim that $\varepsilon(u)\leq d_p(u)\leq\pi(u)$. Indeed, we have
$$
\left|\sum_{i=1}^n \lambda_i\left(g(x_i)-g(y_i)\right)\left\langle\phi,e_i\right\rangle\right|
\leq\sum_{i=1}^n \lambda_i\left|g(x_i)-g(y_i)\right|\left\|e_i\right\|
\leq\left\|(\lambda_1\delta_{(x_1,y_1)},\ldots,\lambda_n\delta_{(x_n,y_n)})\right\|_{p'}^{Lw}\left\|(e_1,\ldots,e_n)\right\|_p
$$
for every $g\in B_{X^\#}$ and $\phi\in B_{E^*}$, where we have used H\"{o}lder's inequality in the case $1<p<\infty$. Therefore $\varepsilon(u)\leq\left\|(\lambda_1\delta_{(x_1,y_1)},\ldots,\lambda_n\delta_{(x_n,y_n)})\right\|_{p'}^{Lw}\left\|(e_1,\ldots,e_n)\right\|_p$, and since it holds for each representation of $u$, we deduce that $\varepsilon(u)\leq d_p(u)$. Since $\varepsilon$ is a Lipschitz cross-norm, this implies that $d_p$ is a norm and that 
$$
\left\|e\right\|d(x,y)=\varepsilon(\delta_{(x,y)}\boxtimes e)\leq d_p(\delta_{(x,y)}\boxtimes e)
$$
for all $x,y\in X$ and $e\in E$. Moreover, Definition \ref{def-che-norms} gives 
$$
d_p(\delta_{(x,y)}\boxtimes e)\leq\left\|e\right\|\sup_{g\in B_{X^\#}}\left|g(x)-g(y)\right|=\left\|e\right\|d(x,y).
$$
Hence $d_p$ is a Lipschitz cross-norm. Then $d_p\leq\pi$ by Theorem \ref{teo-greatest} as we wanted. Now our claim implies that $d_p$ is dualizable by Proposition \ref{prop-carac-dualizable}. 

Finally, to prove that $d_p$ is uniform, take $h\in\Lipo(X,X)$ and $T\in\L(E;E)$. Let $u\in X\boxtimes E$ and pick a representation $\sum_{i=1}^n \lambda_i\delta_{(x_i,y_i)}\boxtimes e_i$ for $u$. We have 
$$
\left\|(\lambda_1\delta_{(h(x_1),h(y_1))},\ldots,\lambda_n\delta_{(h(x_n),h(y_n))})\right\|_{p'}^{Lw}\left\|(T(e_1),\ldots,T(e_n))\right\|_p
\leq\Lip(h)\left\|(\lambda_1\delta_{(x_1,y_1)},\ldots,\lambda_n\delta_{(x_n,y_n)})\right\|_{p'}^{Lw}\left\|T\right\|\left\|(e_1,\ldots,e_n)\right\|_p.
$$
Taking infimum over all the representations of $u$, it follows that $d_p((h\boxtimes T)(u))\leq\Lip(h)\left\|T\right\|d_p(u)$. 
\end{proof}

Next we show that the Lipschitz $1$-nuclear norm $d_1$ is justly the Lipschitz projective norm $\pi$.

\begin{proposition}
For every $u\in X\boxtimes E$, 
$$
d_1(u)=\inf\left\{\sum_{i=1}^n d(x_i,y_i)\left\|e_i\right\|\colon u=\sum_{i=1}^n \delta_{(x_i,y_i)}\boxtimes e_i\right\}
$$
taking the infimum over all representations of $u$.
\end{proposition}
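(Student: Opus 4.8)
The plan is to establish the two inequalities $d_1\le\pi$ and $\pi\le d_1$ separately, where I write $\pi$ for the infimum on the right-hand side (that is, the Lipschitz projective norm of Definition \ref{def-pro-norm}).

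The inequality $d_1\le\pi$ requires no new work: it was already shown in the proof of Theorem \ref{teo-che-norms} that $\varepsilon\le d_p\le\pi$ for every $1\le p\le\infty$, so in particular $d_1\le\pi$. (Equivalently, $d_1$ is a Lipschitz cross-norm by Theorem \ref{teo-che-norms} and $\pi$ is the greatest such norm by Theorem \ref{teo-greatest}.) Thus the whole content lies in the reverse inequality $\pi\le d_1$.

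For that, I would fix an arbitrary representation $u=\sum_{i=1}^n\lambda_i\delta_{(x_i,y_i)}\boxtimes e_i$ with $\lambda_i\in\R^+$ appearing in the definition of $d_1(u)$, and set $M=\left\|(\lambda_1\delta_{(x_1,y_1)},\ldots,\lambda_n\delta_{(x_n,y_n)})\right\|_\infty^{Lw}$. The crucial observation is that the $\ell_\infty$-type weak norm $M$ dominates each of its individual coordinates: since the maximum inside the supremum is at least its $i$-th entry,
$$M=\sup_{g\in B_{X^\#}}\max_{1\le j\le n}\lambda_j\left|g(x_j)-g(y_j)\right|\ge\lambda_i\sup_{g\in B_{X^\#}}\left|g(x_i)-g(y_i)\right|=\lambda_i\,d(x_i,y_i)$$
for every $i$, where the last equality uses $\sup_{g\in B_{X^\#}}\left|g(x)-g(y)\right|=d(x,y)$, attained by the function $g(z)=d(0,x)-d(z,x)$ exactly as in Theorem \ref{teo-L}. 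Rewriting $u=\sum_{i=1}^n\delta_{(x_i,y_i)}\boxtimes(\lambda_i e_i)$, which is a legitimate representation for the projective infimum, I would then estimate
$$\pi(u)\le\sum_{i=1}^n d(x_i,y_i)\n{\lambda_i e_i}=\sum_{i=1}^n\lambda_i\,d(x_i,y_i)\n{e_i}\le M\sum_{i=1}^n\n{e_i}=M\,\n{(e_1,\ldots,e_n)}_1.$$
Taking the infimum over all representations of $u$ entering the definition of $d_1$ then gives $\pi(u)\le d_1(u)$, completing the proof.

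I do not expect any genuine obstacle here. The only step needing a moment's care is the bound $M\ge\lambda_i d(x_i,y_i)$; once it is in hand, absorbing the scalar weights $\lambda_i$ into the vectors $e_i$ turns a $d_1$-representation into a projective representation at no greater cost, and the identity follows. Conceptually, the proof is just the remark that an $\ell_\infty$ (weak) norm bounds each of its entries, which is precisely what collapses the two-factor $d_1$ expression onto the single-sum projective expression in the case $p=1$.
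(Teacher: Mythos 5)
Your proposal is correct and follows essentially the same route as the paper: the inequality $d_1\leq\pi$ is obtained from Theorems \ref{teo-che-norms} and \ref{teo-greatest}, and the reverse inequality $\pi\leq d_1$ is proved by bounding each term $\lambda_i d(x_i,y_i)=\lambda_i\sup_{g\in B_{X^\#}}\left|g(x_i)-g(y_i)\right|$ by $\left\|(\lambda_1\delta_{(x_1,y_1)},\ldots,\lambda_n\delta_{(x_n,y_n)})\right\|_\infty^{Lw}$ and summing against $\left\|(e_1,\ldots,e_n)\right\|_1$, exactly as in the paper's displayed chain of inequalities.
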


\begin{proof}
Let $u\in X\boxtimes E$ and let $\sum_{i=1}^n \lambda_i\delta_{(x_i,y_i)}\boxtimes e_i$ be a representation of $u$. We have 
\begin{align*}
\pi(u)
&\leq\sum_{i=1}^n \lambda_i d(x_i,y_i)\left\|e_i\right\|\\
&=\sum_{i=1}^n \lambda_i\left(\sup_{g\in B_{X^\#}}\left|g(x_i)-g(y_i)\right|\right)\left\|e_i\right\|\\
&\leq\sum_{i=1}^n\max_{1\leq i\leq n}\left(\lambda_i\sup_{g\in B_{X^\#}}\left|g(x_i)-g(y_i)\right|\right)\left\|e_i\right\|\\
&=\left\|(\lambda_1\delta_{(x_1,y_1)},\ldots,\lambda_n\delta_{(x_n,y_n)})\right\|_\infty^{Lw}\left\|(e_1,\ldots,e_n)\right\|_1
\end{align*}
and therefore $\pi(u)\leq d_1(u)$. The converse inequality follows from Theorems \ref{teo-che-norms} and \ref{teo-greatest}.
\end{proof}

\section{Lipschitz approximable operators}\label{8}

The notions of Lipschitz compact operators and Lipschitz approximable operators from $X$ to $E$ were introduced in \cite{jsv}. Let us recall that a Lipschitz operator $f\in\Lipo(X,E)$ is said to be Lipschitz compact if its Lipschitz image $\left\{(f(x)-f(y))/d(x,y)\colon x,y\in X, \; x\neq y\right\}$ is relatively compact in $E$ and $f$ is said to be Lipschitz approximable if it is the limit in the Lipschitz norm $\Lip$ of a sequence of Lipschitz finite-rank operators from $X$ to $E$. 

We show that the spaces of Lipschitz finite-rank operators and Lipschitz approximable operators can be identified as spaces of continuous linear functionals.

\begin{theorem}\label{teo-identification3}
The map $K\colon X^\#\boxast_{\pi'} E^*\to\LipoF(X,E^*)$, defined by 
$$
K\left(\sum_{j=1}^m g_j\boxtimes\phi_j\right)=\sum_{j=1}^m g_j\cdot\phi_j,
$$ 
is an isometric isomorphism. As a consequence, the space of all Lipschitz approximable operators from $X$ to $E^*$ is isometrically isomorphic to $X^\#\widehat{\boxast}_{\pi'} E^*$.
\end{theorem}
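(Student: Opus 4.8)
The plan is to leverage three facts already at our disposal: the purely algebraic statement that $K$ is a linear isomorphism (Theorem \ref{theo-identification2}), the coincidence $\pi=L$ (Corollary \ref{pi es L}), and the isometric duality $\Lambda_0\colon\Lipo(X,E^*)\to(X\widehat{\boxtimes}_L E)^*$ of Theorem \ref{theo-dual-classic}. Since $\pi=L$, the norm $\pi'$ on $X^\#\boxast E^*$ is exactly the restriction of the dual norm of $(X\widehat{\boxtimes}_L E)^*$, and the idea is to transport this dual norm back to $\Lipo(X,E^*)$ through $\Lambda_0$, where it becomes the Lipschitz norm. Because $K$ and $\Lambda_0^{-1}$ turn out to coincide on $X^\#\boxast E^*$, the isometry will follow at once.

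First I would verify the compatibility $\Lambda_0(g\cdot\phi)=g\boxtimes\phi$ for every $g\in X^\#$ and $\phi\in E^*$. This follows from the identity $(g\boxtimes\phi)(u)=u(g\cdot\phi)$ recorded after Definition \ref{def-funct}, together with $\Lambda_0(f)(u)=u(f)$ from Theorem \ref{theo-dual-classic} (via Lemma \ref{lem-2}): both functionals agree on $X\boxtimes E$, hence on its dense image in $X\widehat{\boxtimes}_\pi E=X\widehat{\boxtimes}_L E$. By linearity this gives, for $u^*=\sum_{j=1}^m g_j\boxtimes\phi_j$, the relation $u^*=\Lambda_0\bigl(\sum_{j=1}^m g_j\cdot\phi_j\bigr)=\Lambda_0(K(u^*))$, so that $K$ is precisely the restriction of $\Lambda_0^{-1}$ to $X^\#\boxast E^*$.

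Next I would compute the norm. By Corollary \ref{pro-dual-norm} the value $\pi'(u^*)$ is the dual norm of $u^*$ as a functional on $X\boxtimes_\pi E$, which equals its norm in $(X\widehat{\boxtimes}_\pi E)^*$ since a normed space and its completion have the same dual. Using $\pi=L$ and the isometry $\Lambda_0$ one then gets
$$
\pi'(u^*)=\n{u^*}_{(X\widehat{\boxtimes}_L E)^*}=\n{\Lambda_0(K(u^*))}_{(X\widehat{\boxtimes}_L E)^*}=\Lip(K(u^*)).
$$
Combined with Theorem \ref{theo-identification2}, this shows that $K$ is a surjective linear isometry, which proves the first assertion.

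For the consequence, I would invoke the fact that an isometric isomorphism between normed spaces extends uniquely to an isometric isomorphism of their completions. The completion of $X^\#\boxast_{\pi'}E^*$ is $X^\#\widehat{\boxast}_{\pi'}E^*$ by Definition \ref{def-Lipschitz-associated-cross-norm}, while the completion of $\LipoF(X,E^*)$ for the Lipschitz norm is its closure inside the Banach space $\Lipo(X,E^*)$, which is by definition the space of Lipschitz approximable operators from $X$ to $E^*$. Hence $K$ extends to the desired isometric isomorphism. The only delicate point is the norm computation above: one must be careful that the supremum defining $\pi'$ over representatives in $X\boxtimes_\pi E$ genuinely coincides with the dual norm on the completion, and that the identity $\pi=L$ is being used at the level of the dual norms rather than merely on $X\boxtimes E$. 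Both checks are routine once the mediating role of $\Lambda_0$ has been pinned down, so I do not anticipate a substantive obstacle beyond this bookkeeping.
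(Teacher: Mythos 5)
Your proposal is correct and follows essentially the same route as the paper: the paper also obtains the isometry by combining Corollary \ref{pro-dual-norm}, the identity $(g\boxtimes\phi)(u)=u(g\cdot\phi)$ (via Lemmas \ref{lem-4.1} and \ref{lem-2}), the equality $\pi=L$ from Corollary \ref{pi es L}, and the isometric duality $\Lambda_0$ of Theorem \ref{theo-dual-classic}, then passes to completions for the consequence. Your reformulation of the key step as the statement that $K$ is the restriction of $\Lambda_0^{-1}$ to $X^\#\boxast E^*$ is just a cleaner packaging of the same computation.
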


\begin{proof}
By Theorem \ref{theo-identification2}, $K$ is a linear bijection. For any $\sum_{j=1}^m g_j\boxtimes\phi_j\in X^\#\boxast E^*$, we have  
\begin{align*}
\pi'\left(\sum_{j=1}^m g_j\boxtimes\phi_j\right)
&=\sup\left\{\left|\left(\sum_{j=1}^m g_j\boxtimes\phi_j\right)\left(\sum_{i=1}^n \delta_{(x_i,y_i)}\boxtimes e_i\right)\right|
\colon \pi\left(\sum_{i=1}^n \delta_{(x_i,y_i)}\boxtimes e_i\right)\leq 1\right\}\\
&=\sup\left\{\left|\sum_{i=1}^n\left\langle \left(\sum_{j=1}^m g_j\cdot\phi_j\right)(x_i)-\left(\sum_{j=1}^m g_j\cdot\phi_j\right)(y_i),e_i \right\rangle\right|
\colon \pi\left(\sum_{i=1}^n \delta_{(x_i,y_i)}\boxtimes e_i\right)\leq 1\right\}\\
&=\sup\left\{\left|\Lambda\left(\sum_{j=1}^m g_j\cdot\phi_j\right)\left(\sum_{i=1}^n \delta_{(x_i,y_i)}\boxtimes e_i\right)\right|
\colon L\left(\sum_{i=1}^n \delta_{(x_i,y_i)}\boxtimes e_i\right)\leq 1\right\}\\
&=\left\|\Lambda\left(\sum_{j=1}^m g_j\cdot\phi_j\right)\right\|\\
&=\Lip\left(\sum_{j=1}^m g_j\cdot\phi_j\right).
\end{align*}
by using Corollary \ref{pro-dual-norm}, Lemmas \ref{lem-4.1} and \ref{lem-2}, Corollary \ref{pi es L} and Theorem \ref{theo-dual-classic}. Hence $K$ is an isometry. The consequence follows from a known result of Functional Analysis. 
\end{proof}

From Theorems \ref{theo-dual-classic} and \ref{teo-identification3} and Corollary \ref{pi es L}, we infer the next consequence.

\begin{corollary}\label{cor-nuevo5}
The space $X^\#\widehat{\boxast}_{\pi'}E^*$ is isometrically isomorphic to $(X\widehat{\boxtimes}_\pi E)^*$ if and only if $\Lipo(X,E^*)$ is isometrically isomorphic to the space of Lipschitz approximable operators from $X$ to $E^*$. 
\end{corollary}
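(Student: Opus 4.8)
The plan is to obtain the biconditional as a formal consequence of three isometric identifications that hold unconditionally, chained together by transitivity of isometric isomorphism. Throughout, write $\mathcal{A}$ for the space of Lipschitz approximable operators from $X$ to $E^*$.

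First I would record the two ``anchor'' isometries. On one hand, Corollary \ref{pi es L} gives $\pi=L$ as norms on $X\boxtimes E$, so the normed spaces $X\boxtimes_\pi E$ and $X\boxtimes_L E$ coincide, and hence so do their completions and the dual spaces $(X\widehat{\boxtimes}_\pi E)^*$ and $(X\widehat{\boxtimes}_L E)^*$. Combining this with Theorem \ref{theo-dual-classic}, the map $\Lambda_0$ provides an isometric isomorphism
$$
\Lambda_0\colon \Lipo(X,E^*)\longrightarrow (X\widehat{\boxtimes}_\pi E)^*.
$$
On the other hand, Theorem \ref{teo-identification3} furnishes, via $K$, an isometric isomorphism
$$
K\colon X^\#\widehat{\boxast}_{\pi'}E^*\longrightarrow \mathcal{A}.
$$

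Next I would deduce both implications purely formally. For the reverse direction, assume $\Lipo(X,E^*)$ is isometrically isomorphic to $\mathcal{A}$ through an isometric isomorphism $\Phi\colon \mathcal{A}\to\Lipo(X,E^*)$; then the composition $\Lambda_0\circ\Phi\circ K$ is an isometric isomorphism of $X^\#\widehat{\boxast}_{\pi'}E^*$ onto $(X\widehat{\boxtimes}_\pi E)^*$. For the forward direction, assume there is an isometric isomorphism $\Psi\colon X^\#\widehat{\boxast}_{\pi'}E^*\to (X\widehat{\boxtimes}_\pi E)^*$; then $\Lambda_0^{-1}\circ\Psi\circ K^{-1}$ is an isometric isomorphism of $\mathcal{A}$ onto $\Lipo(X,E^*)$. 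This establishes the equivalence.

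The argument has essentially no technical obstacle: the entire content is loaded into the three cited results, and what remains is bookkeeping of compositions. The only point deserving a word of justification is the identification $(X\widehat{\boxtimes}_\pi E)^*=(X\widehat{\boxtimes}_L E)^*$, which is a genuine equality rather than merely an abstract isomorphism, valid because Corollary \ref{pi es L} yields equality of the norms $\pi$ and $L$, not just their equivalence. I would also point out that a sharper canonical statement underlies the corollary: a direct computation using the formula for $\Lambda_0^{-1}$ shows that $\Lambda_0^{-1}(g\boxtimes\phi)=g\cdot\phi=K(g\boxtimes\phi)$, so under $\Lambda_0$ the subspace $X^\#\widehat{\boxast}_{\pi'}E^*\subseteq (X\widehat{\boxtimes}_\pi E)^*$ corresponds exactly to $\mathcal{A}\subseteq\Lipo(X,E^*)$; thus the two sides of the biconditional express the same surjectivity question on the two isometric models. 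The abstract-isomorphism formulation stated here, however, already follows from the transitivity argument above.
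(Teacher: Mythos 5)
Your argument is correct and matches the paper's: the corollary is deduced there, exactly as you do, by combining Theorem \ref{theo-dual-classic}, Corollary \ref{pi es L} and Theorem \ref{teo-identification3}, with the content carried entirely by those three results and the rest being transitivity of isometric isomorphism. Your closing observation that $\Lambda_0$ identifies $X^\#\widehat{\boxast}_{\pi'}E^*$ with the space of Lipschitz approximable operators inside $\Lipo(X,E^*)$, so that both sides of the biconditional express the same surjectivity question, is a worthwhile sharpening but not needed for the statement as written.
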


We recall that a Banach space $E$ is said to have the approximation property if given a compact set $K\subset E$ and $\varepsilon>0$, there is a finite-rank bounded linear operator $T\colon E\to E$ such that $\left\|Tx-x\right\|<\varepsilon$ for every $x\in K$. The approximation property was thoroughly studied by Grothendieck in \cite{g}. In \cite[Corollary 2.5]{jsv}, it was shown that $X^\#$ has the approximation property if and only if the space of all Lipschitz approximable operators from $X$ to $E$ is the space of all Lipschitz compact operators from $X$ to $E$. Using this fact and Theorem \ref{teo-identification3}, we derive the following result.

\begin{corollary}\label{cor-nuevo2}
Let $X$ be a pointed metric space such that $X^\#$ has the approximation property. Then, for any Banach space $E$, the space of all Lipschitz compact operators from $X$ to $E^*$ is isometrically isomorphic to $X^\#\widehat{\boxast}_{\pi'}E^*$.
\end{corollary}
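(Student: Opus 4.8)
The plan is to obtain this as an immediate consequence of two facts already available: the identification of Lipschitz approximable operators furnished by Theorem~\ref{teo-identification3}, and the characterization of the approximation property of $X^\#$ recorded in \cite[Corollary 2.5]{jsv}. No genuinely new construction is needed; the argument is a composition of two isometric identifications.

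First I would apply the cited result from \cite{jsv} with the target Banach space taken to be $E^*$. Since $X^\#$ has the approximation property, that result states precisely that the space of all Lipschitz approximable operators from $X$ to $E^*$ coincides with the space of all Lipschitz compact operators from $X$ to $E^*$. It is worth observing that this coincidence is isometric rather than merely algebraic: both classes are linear subspaces of $\Lipo(X,E^*)$ and carry the same ambient Lipschitz norm $\Lip$, so the two normed spaces are literally equal, and in particular isometrically isomorphic. Next I would invoke the consequence recorded in Theorem~\ref{teo-identification3}, namely that the space of all Lipschitz approximable operators from $X$ to $E^*$ is isometrically isomorphic to $X^\#\widehat{\boxast}_{\pi'}E^*$, this isomorphism being the continuous extension of the isometry $K$ of that theorem. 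Composing the equality of the previous step with this isometric isomorphism yields at once that the space of all Lipschitz compact operators from $X$ to $E^*$ is isometrically isomorphic to $X^\#\widehat{\boxast}_{\pi'}E^*$.

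The nearest thing to an obstacle is simply verifying that the hypothesis of the \cite{jsv} characterization is met in the exact form we require, that is, that its conclusion is applied with the target space $E^*$. Since $E^*$ is itself a Banach space and the cited result holds for an arbitrary Banach-space target once $X^\#$ has the approximation property, this requires no additional argument, and the corollary follows. The whole proof therefore reduces to the sentence preceding the statement in the excerpt: using the cited fact together with Theorem~\ref{teo-identification3}.
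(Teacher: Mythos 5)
Your proposal is correct and follows exactly the paper's own route: it invokes \cite[Corollary 2.5]{jsv} to identify the Lipschitz compact operators from $X$ to $E^*$ with the Lipschitz approximable ones when $X^\#$ has the approximation property, and then applies the consequence of Theorem~\ref{teo-identification3}. Your added remark that the coincidence of the two operator classes is isometric (both carrying the ambient norm $\Lip$) is a worthwhile clarification that the paper leaves implicit.
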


By Theorem \ref{theo-dual-classic} and Corollary \ref{cor-nuevo2}, we have the following.

\begin{corollary}\label{cor-nuevo3}
Let $X$ be a pointed metric space such that $X^\#$ has the approximation property and let $E$ be a Banach space. Then $X^\#\widehat{\boxast}_{\pi'}E^*$ is isometrically isomorphic to $(X\widehat{\boxtimes}_\pi E)^*$ if and only if $\Lipo(X,E^*)$ is isometrically isomorphic to the space of Lipschitz compact operators from $X$ to $E^*$. 
\end{corollary}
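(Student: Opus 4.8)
The plan is to deduce the equivalence purely by transitivity, chaining together two isometric identifications already at our disposal, with the approximation property entering only through one of them. Throughout, write $\mathcal{K}_{\Lip}(X,E^*)$ for the space of Lipschitz compact operators from $X$ to $E^*$. First I would record the identification that holds unconditionally: by Corollary \ref{pi es L} the induced dual norm $L$ agrees with $\pi$, so Theorem \ref{theo-dual-classic} furnishes an isometric isomorphism
$$
(X\widehat{\boxtimes}_\pi E)^* \cong \Lipo(X,E^*),
$$
valid for every pointed metric space $X$ and every Banach space $E$, with no hypothesis on $X^\#$.

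Next I would bring in the standing assumption that $X^\#$ has the approximation property. Under this hypothesis Corollary \ref{cor-nuevo2} supplies the second isometric isomorphism,
$$
X^\#\widehat{\boxast}_{\pi'}E^* \cong \mathcal{K}_{\Lip}(X,E^*).
$$
These two displayed isomorphisms are fixed data; the corollary to be proved then compares the two left-hand sides against the two right-hand sides.

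With both identifications in hand the biconditional is immediate from the fact that a composition of isometric isomorphisms is again an isometric isomorphism. If $X^\#\widehat{\boxast}_{\pi'}E^*$ is isometrically isomorphic to $(X\widehat{\boxtimes}_\pi E)^*$, then composing that isomorphism with the two displayed ones (using the inverse of the first where needed) yields an isometric isomorphism $\mathcal{K}_{\Lip}(X,E^*) \cong \Lipo(X,E^*)$, which is the asserted right-hand condition. Conversely, any isometric isomorphism $\Lipo(X,E^*) \cong \mathcal{K}_{\Lip}(X,E^*)$ composes with the same two identifications to produce $X^\#\widehat{\boxast}_{\pi'}E^* \cong (X\widehat{\boxtimes}_\pi E)^*$.

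I expect no genuine obstacle here: the statement is a formal chaining of previously established isometries, and the only point requiring (minimal) care is the transitivity of isometric isomorphism, which makes both implications go through at once. All the substantive content sits in the inputs, namely Theorem \ref{theo-dual-classic} together with Corollary \ref{pi es L} for the unconditional half, and Corollary \ref{cor-nuevo2} for the half where the approximation property of $X^\#$ is actually used.
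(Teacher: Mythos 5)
Your proposal is correct and matches the paper's argument exactly: the paper derives this corollary precisely by combining Theorem \ref{theo-dual-classic} (with $L=\pi$ from Corollary \ref{pi es L}) and Corollary \ref{cor-nuevo2}, and concluding by transitivity of isometric isomorphism. You have simply written out the chaining that the paper leaves implicit.
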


We close this section with a new formula for the norm $\pi'$. By \cite[Lemma 1.1]{jsv}, the closed unit ball of the Lipschitz-free Banach space $\F(X)$ over a pointed metric space $X$ coincides with the closure of the convex balanced hull of the set $\{(\delta_x-\delta_y)/d(x,y)\colon x,y\in X,\; x\neq y\}$ in $(X^\#)^*$, where $\delta_x$ is the evaluation functional at $x$ defined on $X^\#$. It is well known that every element in the convex balanced hull of that set is of the form
$$
\sum_{i=1}^n\lambda_i\frac{\delta_{x_i}-\delta_{y_i}}{d(x_i,y_i)}
$$
for some $n\in\N$, $\lambda_1,\ldots,\lambda_n\in\K$, $\sum_{i=1}^n|\lambda_i|\leq 1$ and $(x_1,y_1),\ldots,(x_n,y_n)\in X^2$ with $x_i\neq y_i$ for $i\in\{1,\ldots,n\}$.

\begin{definition}\label{def-inj-norm-dual}
For each $\sum_{j=1}^m  g_j\boxtimes \phi_j\in X^\#\boxast E^*$, define:
$$
\varepsilon\left(\sum_{j=1}^m  g_j\boxtimes \phi_j\right)=\sup\left\{\left|\sum_{j=1}^m \gamma(g_j)\left\langle\phi_j,e\right\rangle\right|\colon \gamma\in B_{\F(X)},\; e\in B_E\right\}.
$$
\end{definition}

Note that this supremum exists since, for all $\gamma\in B_{\F(X)}$ and $e\in B_E$, 
$$
\left|\sum_{j=1}^m \gamma(g_j)\left\langle\phi_j,e\right\rangle\right|
\leq\sum_{j=1}^m\left|\gamma(g_j)\left\langle\phi_j,e\right\rangle\right|
\leq\sum_{j=1}^m \left\|\gamma\right\|\Lip(g_j)\left\|\phi_j\right\|\left\|e\right\|
\leq\sum_{j=1}^m \Lip(g_j)\left\|\phi_j\right\|.
$$

\begin{theorem}
The associated Lipschitz norm $\pi'$ of $\pi$ on $X\boxtimes E$ is $\varepsilon$ on $X^\#\boxast E^*$. 
\end{theorem}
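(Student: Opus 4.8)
The plan is to prove the two inequalities $\pi'(u^*)\le\varepsilon(u^*)$ and $\varepsilon(u^*)\le\pi'(u^*)$ for an arbitrary $u^*=\sum_{j=1}^m g_j\boxtimes\phi_j\in X^\#\boxast E^*$, where $\varepsilon$ is the norm of Definition \ref{def-inj-norm-dual}. By Corollary \ref{pro-dual-norm} and Definition \ref{def-funct},
$$
\pi'(u^*)=\sup\left\{\left|\sum_{i=1}^n\sum_{j=1}^m\left(g_j(x_i)-g_j(y_i)\right)\langle\phi_j,e_i\rangle\right|\colon \pi\left(\sum_{i=1}^n\delta_{(x_i,y_i)}\boxtimes e_i\right)\le 1\right\},
$$
while $\varepsilon(u^*)$ is the supremum over $\gamma\in B_{\F(X)}$ and $e\in B_E$. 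The bridge between the two formulas is the description of $B_{\F(X)}$ recalled just before the statement from \cite[Lemma 1.1]{jsv}, namely that it is the closed convex balanced hull of $\{(\delta_x-\delta_y)/d(x,y)\colon x,y\in X,\ x\ne y\}$, whose elements have the form $\sum_i\lambda_i(\delta_{x_i}-\delta_{y_i})/d(x_i,y_i)$ with $\sum_i|\lambda_i|\le 1$.

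For the inequality $\pi'(u^*)\le\varepsilon(u^*)$, I would fix $v=\sum_{i=1}^n\delta_{(x_i,y_i)}\boxtimes e_i$ with $\pi(v)\le 1$ and $\eta>0$, and use Definition \ref{def-pro-norm} to choose a representation with $\sum_i d(x_i,y_i)\n{e_i}<\pi(v)+\eta$, discarding terms with $x_i=y_i$ or $e_i=0$ since by Lemma \ref{lem-1} they contribute nothing. Factoring each surviving summand as $d(x_i,y_i)\n{e_i}$ times the value of the inner sum at $\gamma_i=(\delta_{x_i}-\delta_{y_i})/d(x_i,y_i)\in B_{\F(X)}$ and $e_i/\n{e_i}\in B_E$, each such inner sum is bounded by $\varepsilon(u^*)$, whence $|u^*(v)|\le\varepsilon(u^*)\sum_i d(x_i,y_i)\n{e_i}\le(\pi(v)+\eta)\varepsilon(u^*)$; letting $\eta\to 0$ and taking the supremum over $v$ gives the claim.

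For the reverse inequality $\varepsilon(u^*)\le\pi'(u^*)$, I would first note that $\gamma\mapsto\sum_{j}\gamma(g_j)\langle\phi_j,e\rangle$ is continuous, so the supremum defining $\varepsilon(u^*)$ is unchanged if $\gamma$ ranges only over the (non-closed) convex balanced hull. It then suffices to take $\gamma=\sum_{i=1}^n\lambda_i(\delta_{x_i}-\delta_{y_i})/d(x_i,y_i)$ with $\sum_i|\lambda_i|\le 1$ and $e\in B_E$, and to set $v=\sum_{i=1}^n\delta_{(x_i,y_i)}\boxtimes\bigl(\lambda_i e/d(x_i,y_i)\bigr)$. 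One checks directly that $\pi(v)\le\sum_i|\lambda_i|\n{e}\le 1$ and that $u^*(v)=\sum_{j}\gamma(g_j)\langle\phi_j,e\rangle$, so each quantity in the supremum for $\varepsilon(u^*)$ is dominated by $\pi'(u^*)$.

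The step requiring the most care is the reverse direction: one must justify the passage from the closed convex balanced hull to the convex balanced hull via continuity, and verify that the constructed $v$ reproduces \emph{exactly} the pairing $\sum_{j}\gamma(g_j)\langle\phi_j,e\rangle$ (this also makes $\varepsilon(u^*)$ visibly representation-independent). As a conceptual cross-check I expect to be available, Proposition \ref{prop-Lipschitz-projective-is-Banach-projective} identifies $X\widehat{\boxtimes}_\pi E$ with $\F(X)\widehat{\otimes}_\pi E$, under which $g\boxtimes\phi$ becomes the rank-one operator $\gamma\mapsto\gamma(g)\phi$ in $\mathcal{L}(\F(X),E^*)=(\F(X)\widehat{\otimes}_\pi E)^*$; then $\pi'(u^*)$ is precisely the operator norm of $\sum_j g_j\otimes\phi_j$, which unwinds to the formula in Definition \ref{def-inj-norm-dual} and confirms the two-inequality computation.
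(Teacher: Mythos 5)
Your proposal is correct and follows essentially the same route as the paper: both directions reduce to elementary tensors (yours via a near-optimal representation and the termwise triangle inequality, the paper via an intermediate supremum over elementary tensors and a max-of-ratios estimate, which amounts to the same computation), and the reverse inequality uses exactly the paper's construction $v=\sum_i\delta_{(x_i,y_i)}\boxtimes(\lambda_i e/d(x_i,y_i))$ together with the density of convex balanced combinations of the normalized molecules $(\delta_x-\delta_y)/d(x,y)$ in $B_{\mathcal{F}(X)}$. The continuity/density step you flag is handled the same way in the paper, so there is no gap.
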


\begin{proof}
Let $\sum_{j=1}^m  g_j\boxtimes \phi_j\in X^\#\boxast E^*$. We have    
\begin{align*}
\pi'\left(\sum_{j=1}^m  g_j\boxtimes \phi_j\right)
&=\sup\left\{\left|\left(\sum_{j=1}^m g_j\boxtimes\phi_j\right)\left(\sum_{i=1}^n \delta_{(x_i,y_i)}\boxtimes e_i\right)\right|\colon \pi\left(\sum_{i=1}^n \delta_{(x_i,y_i)}\boxtimes e_i\right)\leq 1\right\}\\
&=\sup\left\{\left|\left(\sum_{j=1}^m g_j\boxtimes\phi_j\right)\left(\delta_{(x,y)}\boxtimes e\right)\right|\colon \pi\left(\delta_{(x,y)}\boxtimes e\right)\leq 1\right\}\\
&=\varepsilon\left(\sum_{j=1}^m  g_j\boxtimes \phi_j\right).
\end{align*}
In order to justify these equalities, denote by $\alpha(\sum_{j=1}^m  g_j\boxtimes \phi_j)$ and $\beta(\sum_{j=1}^m  g_j\boxtimes \phi_j)$ the first and the second supremum which appear above. The first equality follows from Corollary \ref{pro-dual-norm}. To see that $\alpha(\sum_{j=1}^m  g_j\boxtimes \phi_j)\leq \beta(\sum_{j=1}^m  g_j\boxtimes \phi_j)$, we will use the easy fact that if $n\in\N$, $a_1,\ldots,a_n\in\R_0^+$ and $b_1,\ldots,b_n\in\R^+$, then 
$$
\frac{a_1+\cdots+a_n}{b_1+\cdots+b_n} \leq\max\left\{\frac{a_1}{b_1},\cdots,\frac{a_n}{b_n}\right\}.
$$
Fix $\sum_{i=1}^n \delta_{(x_i,y_i)}\boxtimes e_i\in X\boxtimes E$, nonzero. If $\sum_{i=1}^p \delta_{(x'_i,y'_i)}\boxtimes e'_i=\sum_{i=1}^n \delta_{(x_i,y_i)}\boxtimes e_i$, we have
\begin{align*}
\frac{\left|\left(\sum_{j=1}^m g_j\boxtimes\phi_j\right)\left(\sum_{i=1}^n \delta_{(x_i,y_i)}\boxtimes e_i\right)\right|}{\sum_{i=1}^p d(x'_i,y'_i)\left\|e'_i\right\|}
&=\frac{\left|\left(\sum_{j=1}^m g_j\boxtimes\phi_j\right)\left(\sum_{i=1}^p \delta_{(x'_i,y'_i)}\boxtimes e'_i\right)\right|}{\sum_{i=1}^p d(x'_i,y'_i)\left\|e'_i\right\|}\\
&\leq\frac{\sum_{i=1}^p\left|\left(\sum_{j=1}^m g_j\boxtimes\phi_j\right)( \delta_{(x'_i,y'_i)}\boxtimes e'_i)\right|}{\sum_{i=1}^p d(x'_i,y'_i)\left\|e'_i\right\|}\\
&\leq\max\left\{\frac{\left|\left(\sum_{j=1}^m g_j\boxtimes\phi_j\right)( \delta_{(x'_i,y'_i)}\boxtimes e'_i)\right|}{d(x'_i,y'_i)\left\|e'_i\right\|}\colon 1\leq i\leq p\right\}\\
&\leq\beta\left(\sum_{j=1}^m  g_j\boxtimes \phi_j\right).
\end{align*}
By the definition of $\pi$, it follows that 
$$
\left|\left(\sum_{j=1}^m g_j\boxtimes\phi_j\right)\left(\sum_{i=1}^n \delta_{(x_i,y_i)}\boxtimes e_i\right)\right|
\leq\beta\left(\sum_{j=1}^m  g_j\boxtimes \phi_j\right)\pi\left(\sum_{i=1}^n \delta_{(x_i,y_i)}\boxtimes e_i\right).
$$
This ensures that $\alpha(\sum_{j=1}^m  g_j\boxtimes \phi_j)\leq \beta(\sum_{j=1}^m  g_j\boxtimes \phi_j)$. The converse inequality is clearly certain. 

Now, given any $\delta_{(x,y)}\boxtimes e\in X\boxtimes E$ with $0<\pi(\delta_{(x,y)}\boxtimes e)\leq 1$, we obtain 
$$
\left|\left(\sum_{j=1}^m g_j\boxtimes\phi_j\right)\left(\delta_{(x,y)}\boxtimes e\right)\right|
=\left|\sum_{j=1}^m (g_j(x)-g_j(y))\left\langle\phi_j,e\right\rangle\right|
=\left|\sum_{j=1}^m\left(\frac{\delta_x-\delta_y}{d(x,y)}\right)(g_j)\left\langle\phi_j,d(x,y)e\right\rangle\right|
\leq \varepsilon\left(\sum_{j=1}^m  g_j\boxtimes \phi_j\right)
$$
since $(\delta_x-\delta_y)/d(x,y)\in S_{\F(X)}$ and $d(x,y)e\in B_E$. Passing to the supremum we arrive at $\beta(\sum_{j=1}^m  g_j\boxtimes \phi_j)\leq\varepsilon(\sum_{j=1}^m  g_j\boxtimes \phi_j)$. 

Finally, we show that $\varepsilon(\sum_{j=1}^m  g_j\boxtimes \phi_j)\leq\pi'(\sum_{j=1}^m  g_j\boxtimes \phi_j)$. For any $n\in\N$, $\lambda_1,\ldots,\lambda_n\in\K$, $\sum_{i=1}^n|\lambda_i|\leq 1$, $(x_i,y_i)\in X^2$ and $x_i\neq y_i$ for each $i\in\{1,\ldots,n\}$, and $e\in B_E$, we have
\begin{align*}
\left|\sum_{j=1}^m\left(\sum_{i=1}^n \lambda_i\frac{\delta_{x_i}-\delta_{y_i}}{d(x_i,y_i)}\right)(g_j)\left\langle \phi_j,e\right\rangle\right|
&=\left|\sum_{j=1}^m\left(\sum_{i=1}^n \lambda_i\frac{g_j(x_i)-g_j(y_i)}{d(x_i,y_i)}\left\langle \phi_j,e\right\rangle\right)\right|\\
&=\left|\sum_{j=1}^m\left(\sum_{i=1}^n \frac{\lambda_i}{d(x_i,y_i)}(g_j\boxtimes\phi_j)(\delta_{(x_i,y_i)}\boxtimes e)\right)\right|\\
&=\left|\left(\sum_{j=1}^m g_j\boxtimes\phi_j\right)\left(\sum_{i=1}^n\delta_{(x_i,y_i)}\boxtimes \frac{\lambda_i e}{d(x_i,y_i)}\right)\right|\\
&\leq\pi'\left(\sum_{j=1}^m  g_j\boxtimes \phi_j\right)
\end{align*}
since  
$$
\pi\left(\sum_{i=1}^n\delta_{(x_i,y_i)}\boxtimes \frac{\lambda_i e}{d(x_i,y_i)}\right)
\leq \sum_{i=1}^n\pi\left(\delta_{(x_i,y_i)}\boxtimes \frac{\lambda_i e}{d(x_i,y_i)}\right)
=\sum_{i=1}^n d(x_i,y_i)\frac{\left|\lambda_i\right|\left\|e\right\|}{d(x_i,y_i)}
=\left\|e\right\|\sum_{i=1}^n \left|\lambda_i\right|
\leq 1.
$$
By the density of the elements $\sum_{i=1}^n \lambda_i(\delta_{x_i}-\delta_{y_i})/d(x_i,y_i)$ in $B_{\F(X)}$, we infer that 
$$
\left|\sum_{j=1}^m\gamma(g_j)\left\langle\phi_j,e\right\rangle\right|
\leq\pi'\left(\sum_{j=1}^m g_j\boxtimes\phi_j\right)
$$
for all $\gamma\in B_{\F(X)}$ and $e\in B_E$. Taking supremum over all such $\gamma$ and $e$, we conclude that 
$\varepsilon(\sum_{j=1}^m  g_j\boxtimes \phi_j)\leq\pi'(\sum_{j=1}^m  g_j\boxtimes \phi_j)$ and this completes the proof.
\end{proof}

\bibliographystyle{amsplain}

\end{document}